\definecolor{darkblue}{rgb}{0.0, 0.0, 0.55}
\renewcommand{\qedsymbol}{\rule[.12ex]{1.2ex}{1.2ex}}
\renewcommand{\subset}{\subseteq}
\DeclareMathOperator{\tr}{tr}
\newtheorem{theorem}{Theorem}[section]
\newtheorem{cor}[theorem]{Corollary}
\newtheorem{lemma}[theorem]{Lemma}
\newtheorem{prop}[theorem]{Proposition}
\newtheorem{algo}[theorem]{Algorithm}
\newtheorem{definition}[theorem]{Definition}
\newtheorem{remark}[theorem]{Remark}
\newtheorem{spec}[theorem]{Speculation}
\newtheorem{thm}[theorem]{Theorem}
\newtheorem*{lemma*}{Lemma}
\numberwithin{equation}{section}
\def\beq{\begin{equation}}
\def\eeq{\end{equation}}
\def\ben{\begin{enumerate}}
\def\een{\end{enumerate}}
\def\fH{\mathfrak{H}}
\def\cC{\mathcal{C}}
\def\cD{\mathcal{D}}
\def\cH{\mathcal{H}}
\def\cK{\mathcal{K}}
\def\cP{\mathcal{P}}
\def\cT{\mathcal{T}}
\def\cX{\mathcal{X}}
\def\C{\mathbb{C}}
\def\N{\mathbb{N}}
\def\R{\mathbb{R}}
\def\Q{\mathbb{Q}}
\def\eps{\epsilon}
\def\smrg{{SM(\R)^g}}
\def\smxr[#1]{SM_{#1}(\R)}
\def\smxrg[#1]{SM_{#1}(\R)^g}
\def\bs{\bigskip}
\newcommand{\df}[1]{{\bf{#1}}{\index{#1}}}
\def\free{\partial^{\mathrm{free}}}
\def\euc{\partial^{\mathrm{Euc}}}
\def\mat{\partial^{\mathrm{mat}}}
\def\CR{\color{red}}
\def\bill{\color{purple}}
\def\bs{\bigskip}
\def\sec{\section}
\def\ssec{\subsection}
\def\sssec{\subsubsection}
\def\D{\mathcal{D}}
\def\H{\mathcal{H}}
\newcommand{\SM}[2]{SM_{#2}(\R)^{#1}}
\newcommand{\M}[3]{M_{#2 \times #3}(\R)^{#1}}
\def\LA{L_A}
\def\lA{\Lambda_A}
\newcommand{\kron}{\otimes}
\newcommand{\psd}{\succeq}
\newcommand{\set}[1]{\left\{#1\right\}}
\def\vspan{\mathrm{span}}
\newcommand{\Mat}{\textit{Mat}}
\newcommand{\Free}{\textit{Free}}
\newcommand{\mnae}{non-free matrix extreme\xspace}
\newcommand{\Mnae}{Non-free matrix extreme\xspace}
\newcommand{\arvcnt}{Arveson rank-nullity count\xspace}
\newcommand{\euccnt}{Euclidean rank-nullity count\xspace}
\newcommand{\extcnt}{Matrix extreme rank-nullity count\xspace}
\newcommand{\fce}{free Caratheodory expansion\xspace}
\newcommand{\mycontentsbox}{%
    {\centerline{NOT FOR PUBLICATION}
        \addtolength{\parskip}{-2.3pt}
        \begin{spacing}{0.915}
        \small\tableofcontents
        \end{spacing}}}
\def\enddoc@text{\ifx\@empty\@translators \else\@settranslators\fi
    \ifx\@empty\addresses \else\@setaddresses\fi
    \newpage\mycontentsbox\newpage\printindex}
\title[Matrix and Free Extreme Points of Free Spectrahedra]{Matrix Extreme Points and Free Extreme Points of Free Spectrahedra}
\author[Epperly]{Aidan Epperly}
\address{AE: Mathematics  Department,  UC San Diego, La Jolla, USA}
\email{aepperly@ucsd.edu}
\author[Evert]{Eric Evert}
\address{EE: Computer Science Department, Northwestern University, Evanston, USA}
\email{eric.evert@northwestern.edu}
\author[Helton]{J. William Helton}
\address{JWH: Mathematics  Department,  UC San Diego, La Jolla, USA}
\email{whelton@ucsd.edu}
\author[Klep]{Igor Klep}
\address{
IK: Faculty of Mathematics and Physics, 
University of Ljubljana, 
Slovenia}
\email{igor.klep@fmf.uni-lj.si}
\thanks{IK was supported by the Slovenian Research Agency grants J1-2453, N1-0217 and P1-0222.}
\begin{document}

\subjclass[2020]{Primary 47L07, 13J30. Secondary 46L07, 90C22.}
\date{\today}
\keywords{Spectrahedron, linear matrix inequality (LMI), matrix convex set, matrix extreme point, Arveson extreme point, free extreme point, dilation theory, Caratheodory expansion}

\maketitle

\begin{abstract}

    A spectrahedron is a convex set defined as the solution set of a linear matrix inequality, i.e., the set of all $x \in \R^g$ such that
    \[
    \LA(x) = I + A_1 x_1 + A_2 x_2 + \dots + A_g x_g \psd 0
    \]
     for some symmetric matrices $A_1,\ldots,A_g$.
    This notion can be extended to matrix spaces by taking $X$ to be a tuple of real symmetric matrices of any size $n\times n$ and using the Kronecker product
    \[
    \LA(X) = I_n \kron I_d + A_1 \kron X_1 + A_2 \kron X_2 + \dots + A_g \kron X_g.
    \]
    The solution set of $\LA(X) \psd 0$ is called a \textit{free spectrahedron}. Free spectrahedra play an important roll in systems engineering, operator algebras, and the theory of matrix convex sets. Matrix and free extreme points of free spectrahedra are of particular interest.  While many authors have studied matrix and free extreme points of free spectrahedra, it has until now been unknown if these two types of extreme points are actually different.  

    The results of this paper fall into three main categories: theoretical, algorithmic, and experimental. Firstly, we prove the existence of matrix extreme points of free spectrahedra that are not free extreme. This is done by producing exact arithmetic examples of matrix extreme points that are not free extreme.  On the other hand, we show that if the $A_i$ are $2 \times 2$ matrices, then matrix and free extreme points coincide.  Secondly, we detail a number of methods for constructing matrix extreme points of free spectrahedra that are not free extreme, both exactly and approximately (numerically). We also show how a recent result due to Kriel 
    (Complex Anal.~Oper.~Theory 2019)
    can be used to efficiently test whether a point is matrix extreme. Thirdly, we provide evidence, through a series of numerical experiments, that a substantial number of matrix extreme points of free spectrahedra are not free extreme.
 Numerical work in another direction shows how to effectively write a given tuple in a free spectrahedron as a matrix convex combination of its free extreme points.\looseness=-1
\end{abstract}

\newpage

\section{Introduction}

Semidefinite programming \cite{BPR13} is
a generalization of linear programming which has played a profound role in applied mathematics.
It is based on optimization of linear functionals over convex sets defined by linear matrix inequalities, namely, inequalities of the form
$$L_A(X)=I-A_1X_1-\dots-A_g X_g\succeq0.$$
Here the $X_i$ are real numbers and the set of solutions 
is a convex set called a spectrahedron.
Linear programming amounts to the very special case where
every $A_i$ is diagonal.\looseness=-1

These inequalities also make sense when the $X_i$ are symmetric matrices of any size, $n\times n$,
and enter the formula though Kronecker's (tensor) product $A_i\otimes X_i$.
The solution set of $L_A(X)\succeq0$, denoted $\cD_A$, is called a free spectrahedron
since it contains matrices of all sizes and the defining
``linear pencil" is ``free" of  the sizes of the matrices. The subset $\cD_A(n)$ of $\cD_A$ consisting
of $g$-tuples of $n \times n$ matrices $X_i$ is called the 
$n${th} level of $\cD_A$.
Free spectrahedra are important examples of matrix convex sets \cite{EW97,K+}.
 
 Of great importance to the understanding and use of a convex set $\cC$
 is  its extreme points. Reasons for this include:
 \begin{enumerate}[\rm(a)]
     \item 
  the set of optimizers of  a linear functional  over $\cC$ 
 contains an extreme point;
 \item 
  a ``Caratheodory Krein-Milman type'' theorem  which  expresses a given element of a matrix convex set  as a convex combination of  extreme points.
 \end{enumerate}
 
 Free spectrahedra are  level-wise convex, so there is the usual notion 
 of scalar convex combination,
 but  more powerful
 are 
  matrix convex combinations;
  here  contraction matrices summing to the identity play the role of the  coefficients in the combination. 
 These matrix convex combinations allow for {\it convex combinations of matrix tuples of different sizes} thereby leading to a close relationship between the various levels of a matrix convex set. In particular the geometry of the $n$th level of a matrix convex set directly impacts all other levels of the set, thereby making matrix convexity much more restrictive  than classical convexity.

While for classical convex sets there is only one notion of an extreme point,  there are several notions of extreme points for matrix convex sets.
The three natural notions of extreme points are
 {\it Euclidean extreme,
 matrix extreme} and {\it free extreme points}
 with their set containments being 
 \[
   \text{free  extreme} \ \ \subset   \ \ \text{matrix  extreme} \ \subset \ \ \text{Euclidean  extreme}
 \]
 Here 
 Euclidean refers to old-fashioned classical extreme points. This paper focuses on matrix and free extreme points and treats many aspects of them, both theoretical and numerical.

\ssec{The focus of this paper}
Surprisingly, despite recent advancements in the understanding of extreme points of matrix convex sets, it has remained unknown if there are matrix extreme points of a real free spectrahedron that are not free extreme. Though morally speaking such examples should exist as the definition of a matrix extreme point is evidently weaker than the definition of a free extreme point,
for real free spectrahedra, this question has remained open.
For example, extensive experiments
in \cite{EFHY21} optimizing random
linear functionals over random free
spectrahedra found that while
 optimizers were with extremely
 high probability free extreme points the rest were merely
 euclidean extreme or (a few)
 were numerically bad. These experiments did not provide evidence on the open question of the existence of matrix extreme points that are not free extreme.

In this paper we 
exhibit
real free spectrahedra 
in three and in four variables
with matrix extreme points that are not free extreme.
A major difficulty is that this
involves big matrices and
the computations must be done algebraically.
In another direction we prove that if the
$A_j$ are $2 \times 2$ matrices, then matrix and free extreme points of $\cD_A$ are the same.

In addition we report systematic numerical
experiments (for free spectrahedra in $g =2,3,4$ variables) aimed at two goals:

\ben[\rm(1)]
\item
indicating if 
matrix extreme points which are not free extreme are rare,

\item 
developing an effective 
algorithm for expanding a point in a free spectrahedron as a matrix convex combination of free or matrix extreme points, that is, produce  a 
free Caratheodory expansion.
\een 

Working on these two objectives together is natural since the underlying algorithms are close to each other.
Of independent interest is that to meet these objectives we develop a new technique called Nullspace Purification,
which when added to the underlying algorithms greatly improves their accuracy.

The conclusions of our experiments are as follows.
There is a strong indication that matrix but not free  extreme points are not rare. 
In addition, the free Caratheodory  expansion with Nullspace Purification had a low failure rate, especially for $g>2$. Moreover, the addition of Nullspace purification to the previously best known approach for computing a free Caratheodory expansion \cite{EFHY21} significantly improved the success rate.
In addition, the ``size" of the expansion 
was considerably lower than the provable 
worst case. This illustrates that free Caratheodory expansions in practice are typically even nicer than than theory suggests. See \Cref{sec:free-carath}
for further comments on the observed patterns. 

For quantitative versions of the assertions just made see
the section on Conclusions,
\Cref{section:conclusion}.
Further observations on patterns are at the end of \Cref{sec:experiments} and \ref{sec:free-carath}
and tables of data in these sections should be  interesting to enthusiasts.

The experiments reported on in this article where run using NCSE (NonCommutative Spectrahedron Extreme) \cite{EOYH21}, an NCAlgebra \cite{OHMS17} package\footnote{Both NCSE and NCAlgebra are written using Mathematica.} for performing computations with extreme points of free spectrahedra. In addition, to running our experiments, we added  a number of new functions to NCSE. This includes algorithms for the aforementioned Nullspace Purification technique and algorithms for determining if a given element of a free spectrahedron is a matrix extreme point.

\ssec{Context and related work}
A major movement over the last few decades is to extend theorems involving polynomials,
rational functions  and power series expansions 
on $\C^g$ and $\R^g$ to noncommuting
analogs acting on $g$ tuples of matrices or operators
leading to the development of the booming area of
{\it free analysis}
\cite{BGM06,Pop06,dOHMP09,Voi10,KVV14,AM15,AJP20,JMS21}.
Early in this direction was free probability which 
ultimately applied to random matrices of large size, cf.
\cite{MS17}.
A later avenue is 
free real algebraic geometry which
extends real algebraic geometry from $\R^g$ to
g-tuples of matrices, \cite{HM12,NT15,BKP16,K+}.

Another major direction in free analysis involves 
Semidefinite programming (SDP) and LMIs
as we are studying here.
Related structure  whose techniques are very useful 
for free LMIs have been studied by pure mathematicians
for decades in 
 the area of operator theory. 
 Extreme points in this context, often called Arveson boundary points,
go back to
Arveson's seminal work \cite{A69} and have since been studied by many authors, e.g., see \cite{Ham79,Agl88,MS98,DM05,A08,KLS14,DK15,FHL18,Pas22}. 
The overt introduction of matrix extreme points was in
\cite{WW99} with many advances at the same time due to
Farenick \cite{F00,F04}.
Recently various  works used  free spectrahedra to 
study spectrahedral inclusion
\cite{HKM13,DDSS17,FNT17,Zal17,HKMS19}, and
profound implications for quantum information theory
were exposited in \cite{BN18,BJN22}.
 A booming area based on ``noncommutative optimization'' \cite{DLTW08,PNA10,BKP16,MBM21,WM21}
 is quantum game theory \cite{GLL19,CDN20,PR21}.
Free LMIs
serve as a model for convex structure  which occurs in
linear control and systems engineering problems
specified entirely by signal flow diagrams, cf.~\cite{HMPV09}.

 The remainder of the introduction turns to details,
 giving precise definitions and background theorems.

\def\bX{X}

\def\ot{{\otimes}}

\def\sph{spectrahedr}

\ssec{Matrix notation}
We let $\M{g}{m}{n}$ denote the set of $g$-tuples of $m\times n$ matrices with real entries, and $M_{n}(\R)=M_{n\times n}(\R)$.
Similarly, let $\SM{g}{n}$ denote the set of all $g$-tuples of real symmetric $n\times n$ matrices $X_i$ and set $SM(\R)^g = \cup_n\SM{g}{n}$. 

A matrix $B \in M_{n}(\R)$ is said to be \df{positive semidefinite} if it is symmetric, i.e., $B = B^T$, and all of its eigenvalues are nonnegative. Let $B\psd 0$ denote that the matrix $B$ is positive semidefinite. Similarly, given two symmetric matrices $B_1,B_2 \in M_{n}(\R)$, let $B_1 \psd B_2$ denote that $B_1 - B_2$ is positive semidefinite.

\ssec{Free spectrahedra and linear matrix inequalities}

In this paper, we primarily concern ourselves with a specific class of 
convex sets called free spectrahedra. A free spectrahedron is a matrix convex set that can be defined by a linear matrix inequality. Fix a tuple $A\in\SM{g}{d}$ of $d\times d$ symmetric matrices. A \df{monic linear pencil} $\LA(x)$ is a sum of the form 
\[
\LA(x) = I_d + A_1 x_1 + A_2 x_2 + \dots + A_g x_g.
\]
Given a tuple $X \in \SM{g}{n}$, the \df{evaluation} of $\LA$ at $X$ is 
\[
\LA(X) = I_{d} \kron I_n + A_1 \kron X_1 + A_2 \kron X_2 + \dots + A_g \kron X_g
\]
where $\kron$ denotes the Kronecker Product. A \df{linear matrix inequality} is an inequality of the form
\[
\LA(X) \psd 0.
\]
Let $\lA(X)$ denote the homogeneous linear part of $\LA(X)$, i.e.,
\[
\lA(X) = A_1 \kron X_1 + A_2 \kron X_2 + \dots + A_g \kron X_g,
\]
so that $\LA(X) = I_{dn} + \lA(X)$.

Given a $g$-tuple $A\in\SM{g}{n}$ and a positive integer $n$, we define the \df{free spectrahedron $\D_A$ at level $n$}, denoted $\D_A(n)$, by
\[
\D_A(n) := \{X\in \SM{g}{n} : \LA(X) \psd 0\}.
\]
That is, $\D_A(n)$ is the set of all $g$-tuples of $n\times n$ real symmetric matrices $X$ such that the evaluation $\LA(X)$ is positive semidefinite. Define the \df{free spectrahedron $\D_A$} to be the union over all $n$ of the free spectrahedron $\D_A$ at level $n$, i.e.
\[
\D_A := \bigcup_{n=1}^{\infty}\D_A(n)\subseteq\SM{g}{}.
\]

We say a free spectrahedron is \df{bounded} if there is some real number $C$ so that
\[
CI_n - \sum_{i=1}^gX_i^2 \psd 0
\]
for all $X = (X_1, X_2,\dots,X_g) \in \D_A(n)$ and all positive integers $n$. It is routine to show that a free spectrahedron is bounded if and only if $\D_A(1)$ is bounded \cite{HKM13}. In our definition of a free spectrahedron, we use a non-strict inequality. All free spectrahedra defined in this way are closed in the sense that each $\D_A(n)$ is closed.

\sssec{Minimal Defining Tuples}

Throughout this paper, the size of the defining tuple of a free spectrahedron will play an important role in our analysis. However, we must note that the defining tuple of a free spectrahedron is not unique. For instance, $A\in \SM{g}{d}$ and $A \oplus A$ both define the same free spectrahedron despite $A\oplus A$ being in $\SM{g}{2d}.$ Thus, we would not necessarily expect the size of the defining tuple of a spectrahedron to be an inherent property of the free spectrahedron. We can, however, overcome this with the concept of a \df{minimal defining tuple}. Using \cite{HKM13}, we define a minimal defining tuple $\widetilde{A} \in \SM{g}{d}$ of a free spectrahedron $\D_A$ as a tuple of minimal size such that $\D_{\widetilde{A}} = \D_A$. That is to say if $\widetilde{A} \in \SM{g}{d}$ is a minimal defining tuple of $\D_A$ and $\widehat{A} \in \SM{g}{n}$ for $n < d$, then $\D_{\widehat{A}} \neq \D_{\widetilde{A}}.$ 
The minimal defining tuple $\widetilde A$
is unique up to unitary equivalence \cite[Theorem 3.12 and Corollary 3.18]{HKM13}.  Here, two tuples $X,Z\in\Gamma(n)$ are said to be \df{unitarily equivalent} if there exists some unitary matrix $U$ such that $X_j = U^T Z_j U.$
Throughout the rest of the paper, we will always assume that the defining tuple of a free spectrahedron is minimal.

\sssec{Homogeneous free spectrahedra}
\label{sec:HomogeneousSpectrahedraDefs}

Our proofs make heavy use of homogeneous free spectrahedra and  of a canonical association between free spectrahedra and homogeneous free spectrahedra. 
 Given a tuple $(A_0, \dots, A_g) = (A_0,A) \in \SM{g+1}{d}$ the \df{homogeneous free spectrahedron $\H_{(A_0,A)}$ at level $n$}, denoted $\H_{(A_0,A)}(n)$, is given by
\[
\H_{(A_0,A)}(n) := \set{(X_0,\dots,X_g)\in\SM{g+1}{n}:\Lambda_{(A_0,A)} (X_0,X) \psd 0}
\]
and the \df{homogeneous free spectrahedron $\H_{(A_0,A)}$} is defined as the union over all positive integers $n$ of $\H_{(A_0,A)}(n)$, i.e.,
\[
\H_{(A_0,A)} = \cup_{n=1}^{\infty} \H_{(A_0,A)}(n) \subseteq SM(\R)^{g+1}
\]

We write the tuples $(X_0,\dots,X_g)$ and $(A_0,\dots,A_g)$ as $(X_0,X)$ and $(A_0,A)$, respectively, since we will frequently pass from nonhomogeneous free spectrahedra $\cD_A$ with elements $X \in \SM{g}{}$ to homogeneous free spectrahedra $\cH_{(I,A)}$ with elements $(X_0,X) \in \SM{g+1}{}$. We call $X_0$ and $A_0$ the \df{inhomogeneous component}  of the corresponding tuple.

In particular, given a free spectrahedron $\cD_A$ in $g$ variables, the \df{homogenization} of $\cD_A$, denoted $\fH (\cD_A)$, is the homogeneous free spectrahedron 
\[
\fH(\cD_A) :=\cH_{(I,\check{A})}
\]
where $\check{A}$ is any minimal defining tuple for $\cD_A$. Here we emphasize the minimality of $\check{A}$, as it is required for the homogenization to be well-defined, see 
\cite{E21}.  On the other hand, given a homogeneous free spectrahedron $\cH_{(A_0,A)}$, let $\fH^{-1} (\cH_{(A_0,A)})$ denote the set
\[
\fH^{-1} (\cH_{(A_0,A)}) = \{X \in \smrg | \ (I,X) \in \cH_{(A_0,A)} \}.
\]
A homogeneous free spectrahedron $\cH_{(A_0,A)}$ is \df{sectionally bounded} if $\fH^{-1} (\cH_{(A_0,A)})$ is bounded. We say that $\cH_{(A_0,A)}$  is a \df{positive homogeneous free spectrahedron} if it contains $(1,0,\dots,0) \in SM_1(\R)^{g+1}$ in the interior of $\cH_{(A_0,A)} (1)$. Note that the homogenization $\fH(\cD_A)$ of any free spectrahedron $\cD_A$ is always a positive homogeneous free spectrahedron since $\cD_A$ always contains $0 \in SM_1(\R)^g$ in its interior.

We extend the definitions of $\fH$ and $\fH^{-1}$ to matrix tuples as follows. Given a tuple $X \in SM (\R)^g$  define
\[
\fH(X)=(I,X) \in SM(\R)^{g+1}.
\]
On the other hand, given a matrix tuple $(X_0,X) \in SM (\R)^{g+1}$ with $X_0 \succeq 0$, define
\[
\fH^{-1} (X) = X_0^{\dagger/2} X X_0^{\dagger/2} \in \smrg.
\]
Here $X_0^{\dagger/2}$ denotes the positive semidefinite square root of the Moore-Penrose pseudoinverse of $X_0$. We refer the reader to \cite{E21,K+} for further discussion of homogeneous free spectrahedra. 

\ssec{Matrix Convex Sets}

Given some finite collection $\set{X^i}_{i=1}^\ell$ with $X^i \in \SM{g}{n_i}$ for each $i = 1,2,\dots,\ell$, a \df{matrix convex combination} of $\set{X^i}_{i=1}^\ell$ is a sum of the form
\[
\sum_{i=1}^{\ell} V_i^T X^i V_i \qquad \text{with} \qquad \sum_{i=1}^{\ell} V_i^T V_i = I_n
\] 
where $V_i \in \M{}{n_i}{n}$ and 
\[
V_i^T X^i V_i = \left(V_i^T X_1^i V_i, V_i^T X_2^i V_i,\dots,V_i^T X_g^i V_i \right) \in \SM{g}{n}
\] 
for all $i = 1,2,\dots,\ell$. We emphasize that the tuples $X^i$ need not be the same size. A matrix convex combination is called \df{proper} if $V_i$ is surjective for all $i = 1,2,\dots,\ell$.

A set $\Gamma \subseteq SM(\R)^g$ is \df{matrix convex} if it is closed under matrix convex combinations. The \df{matrix convex hull} of a set $\Gamma \subseteq SM(\R)^g$ is the set of all matrix convex combinations of the elements of $\Gamma$. Equivalently, the matrix convex hull of $\Gamma \subseteq SM(\R)^g$ is the smallest matrix convex set containing $\Gamma$. That is, the matrix convex hull of $\Gamma$ is the intersection of all matrix convex sets containing $\Gamma$.

A set $\Gamma \subseteq SM(\R)^g$ is a \df{matrix cone} if given any finite collection $\set{X^i}_{i=1}^\ell$ with $X^i\in \SM{g}{n_i}$ for each $i = 1,2,\dots,\ell$ and $V_{n_i}\in M_{n_i \times n}(\R)$, then
\begin{align*}
    \sum_{i=1}^\ell V_i^T X^i V_i \in \Gamma.
\end{align*}

\begin{lemma}
Let $A\in \SM{g}{d}$ and let $\D_A$ be the associated free spectrahedron. Then $\D_A$ is matrix convex.
\end{lemma}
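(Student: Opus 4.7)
The plan is to show directly from the definition that for any matrix convex combination $Y = \sum_{i=1}^\ell V_i^T X^i V_i$ with $X^i \in \cD_A(n_i)$ and $\sum V_i^T V_i = I_n$, the evaluation $\LA(Y)$ is positive semidefinite. The strategy rests on two Kronecker product identities and the fact that congruence preserves positive semidefiniteness.

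First, I would establish the key algebraic identity: for each coordinate $j$, linearity of the Kronecker product in its right argument and the mixed product property give
\[
A_j \otimes Y_j \;=\; A_j \otimes \sum_{i=1}^\ell V_i^T X_j^i V_i \;=\; \sum_{i=1}^\ell (I_d \otimes V_i)^T (A_j \otimes X_j^i)(I_d \otimes V_i).
\]
Next, using $\sum_i V_i^T V_i = I_n$, I would rewrite the identity term as
\[
I_d \otimes I_n \;=\; I_d \otimes \sum_{i=1}^\ell V_i^T V_i \;=\; \sum_{i=1}^\ell (I_d \otimes V_i)^T (I_d \otimes I_{n_i})(I_d \otimes V_i).
\]
Summing these two displays over $j$ and over all terms, and then regrouping, I obtain the central equation
\[
\LA(Y) \;=\; \sum_{i=1}^\ell (I_d \otimes V_i)^T \, \LA(X^i) \, (I_d \otimes V_i).
\]

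Since each $X^i \in \cD_A(n_i)$ satisfies $\LA(X^i) \succeq 0$, each summand on the right is a congruence of a positive semidefinite matrix, hence positive semidefinite, and the sum of positive semidefinite matrices is positive semidefinite. Therefore $\LA(Y) \succeq 0$, so $Y \in \cD_A(n)$, proving that $\cD_A$ is closed under matrix convex combinations.

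I do not foresee any real obstacle: the argument is a bookkeeping exercise in tensor algebra, and the only subtlety is making sure the mixed product property $(B_1 \otimes C_1)(B_2 \otimes C_2) = (B_1 B_2)\otimes(C_1 C_2)$ is applied correctly so that the factors $I_d \otimes V_i$ sit on the outside and $\LA(X^i)$ is produced intact on the inside.
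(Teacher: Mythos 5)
Your proof is correct and uses essentially the same idea as the paper: the congruence identity $(I_d \otimes V)^T \LA(X)(I_d \otimes V) = I_d \otimes V^T V + \lA(V^T X V)$, combined with the fact that congruence and sums preserve positive semidefiniteness. The only difference is presentational: you sum the congruences over the $V_i$ explicitly, while the paper states the single-$V$ identity and leaves the passage to a general matrix convex combination (via direct sums or summation) to the reader.
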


\begin{proof}
The matrix convexity of free spectrahedra follows quickly from the fact that $I_{dn} + \lA(X) \psd 0$ implies that 
\[
\begin{split}
0 & \preceq (I_d\otimes V)^T (I_{d}\otimes I_n + \lA(X) )(I_d\otimes V) =
I_{d} \kron V^T V + \lA(V^T X V)  \\
& = I_{dn}+ \lA(V^T X V) 
.\qedhere
\end{split}
\]
\end{proof}

\ssec{Extreme points of matrix convex sets}
The extreme points of matrix convex sets and free spectrahedra are of particular interest, since they have Krein-Milman type spanning properties \cite{WW99,K+,EH19}. The paper will primarily consider three types of extreme points: Euclidean extreme points, matrix extreme points, and free extreme points.

Given a matrix convex set $\Gamma$, we say $X \in \Gamma(n)$ is a \df{Euclidean extreme point} of $\Gamma$ if $X$ cannot be written as a nontrivial classical convex combination of points in $\Gamma(n)$. We note that this is the same as being a classical extreme point of $\Gamma(n)$. We let $\euc{\Gamma}$ denote the set of all the Euclidean extreme points of $\Gamma$.

 We say a point $X\in \Gamma(n)$ is a \df{matrix extreme point} of $\Gamma$ if whenever $X$ is written as a proper matrix convex combination 
\[
X = \sum_{i=1}^\ell V_i^T X^i V_i\qquad \text{with} \qquad \sum_{i=1}^\ell V_i^T V_i = I_n
\]
of points $X^i \in \Gamma$ for $i = 1,2,\dots,\ell$, then for every $i = 1,2,\dots,\ell,$ we have $V_i \in M_{n}(\R)$ and $X$ is unitarily equivalent to $X^i$. We let $\mat{\Gamma}$ denote the set of all the matrix extreme points of $\Gamma$.

Finally, we say a point $X \in \Gamma(n)$ is a \df{free extreme point} of $\Gamma$ if whenever $X$ is written as a matrix convex combination 
\[
X = \sum_{i=1}^\ell V_i^T X^i V_i\qquad \text{with} \quad \sum_{i=1}^\ell V_i^T V_i = I_n
\]
of points $X^i \in \Gamma$ with $V_i \neq 0$ for each $i$, then for all $i = 1,2,\dots,\ell$ either $V_i \in M_{n}(\R)$ and $X$ is unitarily equivalent to $X^i$ or $V_i \in \M{}{n_i}{n}$ where $n_i > n$ and there exists $Z^i\in \Gamma$ such that $X \oplus Z^i$ is unitarily equivalent to $X^i$. In words, a point $X$ is a free extreme point of $\Gamma$ if it cannot be written as a nontrivial matrix convex combination of points in $\Gamma$. We let $\free{\Gamma}$ denote the set of all the free extreme points of $\Gamma$.

\sssec{Irreducible matrix tuples}

Given a matrix $M \in M_{n}(\R)$, a subspace $N \subseteq \R^n$ is a \df{reducing subspace} if both $N$ and $N^\perp$ are invariant subspaces of $M$, which is to say that $N$ is a reducing subspace of $M$ if $MN \subseteq N$ and $MN^\perp \subseteq N^\perp$. A tuple $X\in \SM{g}{n}$ is \df{irreducible} (over $\R$) if the matrices $X_1, \dots, X_g$ have no common reducing subspaces in $\R^n$; a tuple is \df{reducible} (over $\R$) if it is not irreducible. Since we will always be working over $\R$ in this paper, we will drop the use of ``over $\R$'' and simply refer to tuples as reducible or irreducible.

\begin{remark}
Given a matrix convex set $\Gamma$, if $X\in \Gamma$ is matrix extreme, then it is straightforward to show that $X$ is irreducible. 
\end{remark}

\ssec{Significance of free and matrix extreme points}
\label{sec:FreeExtSignificance}

The concept of matrix extreme points was introduced by Webster and Winkler in \cite{WW99}. It is known that the matrix convex hull of the matrix extreme points of a closed bounded matrix convex set $\Gamma$ is equal to $\Gamma$ \cite{K+}. In particular, this means that every point in a free spectrahedron can be written as a matrix convex combination of matrix extreme points.
In \cite{EH19} it was shown that every bounded free spectrahedron $\D_A$ is the matrix convex hull of its free extreme points. Moreover, the free extreme points are the smallest set with this property. We call an expansion of an element of a free spectrahedron in terms of free extreme points a \df{free Caratheodory expansion.}

\begin{theorem}[{\cite[Theorem 1.1]{EH19}}]\label{theorem:free-ext-small}
Let $A\in\SM{g}{d}$ such that $\D_A$ is a bounded free spectrahedron. Then $\D_A$ is the matrix convex hull of its free extreme points. Furthermore, if $E\subset \D_A$ is a set of irreducible tuples which is closed under unitary equivalence and whose matrix convex hull is equal to $\D_A$, then $E$ must contain the free extreme points of $\D_A.$
\end{theorem}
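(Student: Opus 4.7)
The plan is to combine a dilation argument with the Webster--Winkler result that every bounded matrix convex set is the matrix convex hull of its matrix extreme points. Concretely, I would prove two intermediate claims and then assemble them.

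\textbf{Step 1: Dilation from matrix extreme to free extreme.} The heart of the proof is to show that every matrix extreme point $X \in \partial^{\mathrm{mat}} \cD_A$ can be dilated to a free extreme point $Y \in \partial^{\mathrm{free}} \cD_A$, i.e., there exists an isometry $V$ with $X = V^T Y V$ and $Y$ a tuple of (strictly) larger size (or $Y = X$ if $X$ is already free extreme). The standard dilation-theoretic construction goes as follows: if $X$ is not yet free extreme, then by definition there is a proper dilation
\[
\widetilde{X} = \begin{pmatrix} X & \beta \\ \beta^T & \gamma \end{pmatrix} \in \cD_A
\]
with $\beta \neq 0$. Iteratively dilating and using boundedness of $\cD_A$ (which caps the size of the dilations through a uniform norm bound on the pieces), this process terminates at some $Y \in \cD_A$ which admits no further proper dilation — that is, $Y$ lies on the Arveson boundary. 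A short argument shows Arveson boundary points in a free spectrahedron are free extreme. Thus every matrix extreme point is a compression of some free extreme point in $\cD_A$.

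\textbf{Step 2: Matrix convex hull of free extreme points equals $\cD_A$.} By \cite{WW99} (combined with boundedness) every $X \in \cD_A$ is a matrix convex combination $X = \sum V_i^T X^i V_i$ of matrix extreme points $X^i \in \cD_A$. By Step 1, each $X^i = W_i^T Y^i W_i$ for some free extreme point $Y^i \in \cD_A$ and isometry $W_i$. Substituting gives $X = \sum (W_i V_i)^T Y^i (W_i V_i)$, and $\sum (W_i V_i)^T (W_i V_i) = \sum V_i^T W_i^T W_i V_i = \sum V_i^T V_i = I$, so this is a valid matrix convex combination of free extreme points. Hence $\cD_A$ is contained in the matrix convex hull of $\partial^{\mathrm{free}}\cD_A$, and the reverse inclusion is immediate.

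\textbf{Step 3: Minimality.} Let $E \subseteq \cD_A$ be a set of irreducible tuples, closed under unitary equivalence, whose matrix convex hull is $\cD_A$. Fix a free extreme point $Y \in \cD_A$ at level $n$. Since $Y$ is in the matrix convex hull of $E$, write $Y = \sum_{i=1}^\ell V_i^T X^i V_i$ with $X^i \in E$ and $V_i \neq 0$ (drop zero terms). By the definition of free extreme, for each $i$ either (a) $V_i$ is square and $X^i$ is unitarily equivalent to $Y$, or (b) there is some $Z^i \in \cD_A$ with $X^i$ unitarily equivalent to $Y \oplus Z^i$. In case (b), the decomposition $Y \oplus Z^i$ provides a nontrivial common reducing subspace for $X^i$, contradicting irreducibility of $X^i$ (unless $Z^i$ is vacuous, which reduces to case (a)). Hence some $X^i$ is unitarily equivalent to $Y$, and since $E$ is closed under unitary equivalence $Y \in E$.

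\textbf{Main obstacle.} The hard step is Step 1, specifically verifying that the iterative dilation procedure terminates in a free extreme point inside $\cD_A$. Termination relies on the bounded-ness of $\cD_A$ to cap the growth of the dilation sizes, and the fact that Arveson-boundary behaves well in the matrix LMI setting. The other subtle point is that ``matrix extreme'' is a priori weaker than ``free extreme,'' so one must carefully justify why the terminal dilation is free extreme (not merely matrix extreme) — this is where having no proper nontrivial dilation in $\cD_A$ is used together with irreducibility to rule out nonsquare matrix convex representations.
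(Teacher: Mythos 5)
The theorem you are proving is cited from \cite{EH19} and is not proved in the present paper, so there is no local argument to compare against; I am instead measuring your sketch against the actual EH19 argument and against internal correctness.

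Your Step 1 contains a genuine error. You assert that ``a short argument shows Arveson boundary points in a free spectrahedron are free extreme.'' This is false: a free extreme point is, by definition (and by \cite[Theorem 1.1]{EHKM18}, quoted in this paper as item (1) of the Proposition following Corollary~\ref{lemma:Kriel2}), an \emph{irreducible} Arveson extreme point. The terminal point $Y$ of a dilation chain starting from an irreducible $X$ can very well be reducible --- nothing in the dilation step preserves irreducibility --- so $Y$ is merely Arveson extreme, not free extreme. What is true, and what salvages the argument, is that an Arveson extreme point of $\cD_A$ decomposes as a direct sum of irreducible Arveson extreme points (i.e.\ free extreme points), because any 1-dilation of a direct summand extends to a 1-dilation of the whole tuple. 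Writing $Y = \oplus_j Y^j$ with each $Y^j$ free extreme, and decomposing the dilation isometry accordingly, you still obtain $X$ as a matrix convex combination of free extreme points. Without this extra reducibility step, Step~2 collapses, since you would only have expressed $X$ in terms of Arveson extreme points, not free extreme ones.

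The second, larger, issue is the termination of the iterated dilation procedure, which you correctly flag as ``the hard step'' but leave essentially unargued. Boundedness alone is not what makes the process stop: one must choose the 1-dilations so that an explicit, bounded quantity makes strict progress. In \cite{EH19} this is done via \emph{maximal} 1-dilations, which force $\dim \ker L_A$ to strictly increase at each step; combined with the dimension count in the spirit of Corollary~\ref{cor:BeanCounts} ($gn \leq d\,k_{A,X}$ is required for Arveson extremality) this yields termination after at most $g n_0$ steps. An arbitrary sequence of proper 1-dilations need not terminate at all, so this is precisely the substantive content of the theorem, not a routine verification. Once these two points are repaired, your Step~2 assembly is fine, though it is a detour: \cite{EH19} dilates an \emph{arbitrary} point of $\cD_A$ directly to an Arveson extreme point rather than first passing through Webster--Winkler/Kriel to reduce to matrix extreme points, which would require a separate closedness hypothesis and adds nothing. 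Your Step~3 (minimality) is correct as written and matches the intended argument: the case-(b) alternative in the definition of free extreme forces $X^i \cong Y \oplus Z^i$ to be reducible, contradicting irreducibility of the members of $E$, so some $X^i \cong Y$ and closure under unitary equivalence finishes the job.
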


Thus, the free extreme points are, in some sense, the correct notion of extreme points for free spectrahedra, that is, the point $X$ in the free spectrahedron $\D_A$ has a \fce. 
 There are some differences when one considers these notions over $\C$ instead of over $\R$, cf.~\cite{Pas22}.

\ssec{Guide to Main Results}

The paper is organized as follows.
\Cref{section:theory} focuses on the background theory underpinning our results. Of note is \Cref{theorem:Kriel}, a result of Kriel \cite{K+}, which allows us to characterize matrix extreme points. \Cref{sec:d=2} proves that the free and matrix extreme points of $\cD_A$ are the same when $A \in SM_2 (\R)^g$. 
In \Cref{section:exact-points} we exhibit examples of tuples that are matrix extreme points but not free extreme. 
\Cref{section:igors-method} and \Cref{sec:algorithms}  describe algorithms for generating exact and numerical extreme points respectively. 
 \Cref{sec:algorithms} also describes an algorithm for computing the free Caratheodory expansion of a given $X^0 \in \cD_A$,
 as well as a technique called Nullspace Purification which greatly
 improves  accuracy of our experiments.
\Cref{sec:experiments} focuses heavily on empirical observations resulting from a number of numerical experiments; these experiments lead us to believe that \mnae points are not rarities. 
\Cref{sec:free-carath} tests our free Caratheodory algorithm  together with Nullspace Purification and establishes that it is successful.
\Cref{section:conclusion} summarizes our findings.

\sec{Background Theory}\label{section:theory}

\ssec{Characterizing Extreme Points}

An important class of extreme points that has yet to be discussed is the \df{Arveson extreme points} which arise from dilation theory. 
Given a point $X \in \SM{g}{n}$, we say $Y \in \SM{g}{n+k}$ is a \df{dilation}, or more specifically a \df{k-dilation}, of $X$ if $Y$ is of the form
\[
Y = \begin{pmatrix}
X & \beta \\ \beta^T & \gamma
\end{pmatrix}
\]
for $\beta \in M_{n \times k}(\R)^g$ and $\gamma \in \SM{g}{k}. $

A point $X \in \Gamma$ is an Arveson extreme point of $\Gamma$ if 
\begin{align*}
    Y = \begin{pmatrix} X & \beta \\ \beta^T & \gamma \end{pmatrix} \in \Gamma
\end{align*}
implies $\beta = 0$ for $\beta \in M_{n\times 1}(\R)^g$. 
 In words, $X$ is an Arveson extreme points of $\Gamma$ if the only 1-dilations $Y \in \Gamma$ of $X$ are trivial. We can  provide a similar characterization of a Euclidean extreme point in the language of dilation theory:

\begin{prop}[{\cite[Corollary 2.3]{EHKM18}}]
A point $X$ in a free spectrahedron $\D_A$ is a Euclidean extreme point of $\D_A$ if and only if 
\begin{align*}
    Y = \begin{pmatrix} X & \beta \\ \beta & \gamma \end{pmatrix} \in \D_A
\end{align*}
implies $\beta = 0$ for $\beta \in \SM{g}{n}$.
\end{prop}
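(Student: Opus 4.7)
The proposition is a dilation-theoretic characterization of Euclidean (classical) extreme points of the level-$n$ set $\D_A(n)$. My plan is to prove both directions by exploiting the block decomposition of $\LA(Y)$ for a $2 \times 2$ block dilation $Y$.

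For the $(\Leftarrow)$ direction, I argue by contrapositive. Suppose $X \in \D_A(n)$ is not Euclidean extreme, so that there exists a nonzero $\beta \in \SM{g}{n}$ with $X \pm \beta \in \D_A(n)$. Take $\gamma := X$ and form
\[
Y \;=\; \begin{pmatrix} X & \beta \\ \beta & X \end{pmatrix}.
\]
Conjugating $Y$ by the orthogonal matrix $U = \tfrac{1}{\sqrt 2}\begin{pmatrix} I_n & I_n \\ I_n & -I_n \end{pmatrix}$ (which is symmetric and self-inverse) gives $U^T Y U = (X+\beta)\oplus (X-\beta) \in \D_A$, because each summand does. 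Since $\D_A$ is closed under orthogonal conjugation, $Y \in \D_A$. This $Y$ has $\beta \neq 0$, contradicting the dilation hypothesis.

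For the $(\Rightarrow)$ direction, assume $X$ is Euclidean extreme and that $Y = \begin{pmatrix} X & \beta \\ \beta & \gamma \end{pmatrix} \in \D_A$. The key technical step is to observe that under the shuffle permutation swapping the outer factors of size $d$ and $2$ in the Kronecker products, $\LA(Y)\psd 0$ is similar to the block positive semidefinite condition
\[
\begin{pmatrix} \LA(X) & \lA(\beta) \\ \lA(\beta) & \LA(\gamma) \end{pmatrix} \psd 0.
\]
The standard fact that the kernel of a diagonal block of a PSD block matrix is contained in the kernel of the corresponding off-diagonal block then yields $\ker \LA(X) \subseteq \ker \lA(\beta)$. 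Because $\lA(\beta)$ is symmetric and annihilates $\ker \LA(X)$, it preserves $\ran \LA(X)$, on which $\LA(X)$ is strictly positive. A small-perturbation argument therefore gives $\LA(X \pm t\beta) = \LA(X) \pm t\lA(\beta) \psd 0$ for all sufficiently small $t>0$, so $X \pm t\beta \in \D_A(n)$. Euclidean extremeness of $X$ forces $t\beta = 0$, and hence $\beta = 0$.

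The main obstacle is the index bookkeeping needed to justify the shuffle permutation that recasts $\LA(Y)$ into the $2\times 2$ block form displayed above; once this is in hand, the unitary diagonalization trick in the $(\Leftarrow)$ direction and the kernel-inclusion plus perturbation steps in the $(\Rightarrow)$ direction are routine.
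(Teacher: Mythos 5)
Your proof is correct, but note that the paper itself gives \emph{no} proof of this proposition: it is stated as a cited fact from \cite[Corollary 2.3]{EHKM18}, and the closely related linear-system characterization in item (2) of \Cref{theorem:Equations} is likewise delegated to \cite[Corollary 3]{RG95} (Ramana--Goldman). So you have supplied a self-contained dilation-theoretic argument where the paper merely cites. Your method is in fact the same shuffle-plus-Schur-complement machinery that the paper \emph{does} develop for the matrix-extreme analogue in \Cref{lemma:Kriel2} — specifically equation \eqref{eq:ShuffledPencilEval} and the kernel-inclusion argument — transported to the Euclidean case where the off-diagonal block $\beta$ is symmetric rather than $(\beta_0,\beta)$ homogeneous. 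A couple of small points worth spelling out: in the $(\Leftarrow)$ contrapositive, passing from ``not Euclidean extreme'' to a nonzero symmetric $\beta$ with $X\pm\beta\in\D_A(n)$ needs one line — if $X=\alpha X_1+(1-\alpha)X_2$ with $X_1\neq X_2$ take $\beta=t(X_1-X_2)$ with $t=\min(\alpha,1-\alpha)$; and the $U^TYU=(X+\beta)\oplus(X-\beta)$ diagonalization lands you back in $\D_A$ because a free spectrahedron is closed under direct sums and unitary conjugation. In the $(\Rightarrow)$ direction, the kernel inclusion $\ker\LA(X)\subseteq\ker\lA(\beta)$ and the observation that a symmetric $\lA(\beta)$ annihilating $\ker\LA(X)$ therefore preserves $\operatorname{ran}\LA(X)$ — on which $\LA(X)$ is strictly positive — is exactly what makes the small-$t$ perturbation $\LA(X\pm t\beta)\psd 0$ airtight. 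What your route buys over the Ramana--Goldman citation is self-containedness and a transparent parallel to the Arveson and Kriel dilation characterizations already in the paper; what the citation buys is brevity.
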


A theorem of Kriel gives a similarly flavored result regarding matrix extreme points. We include a simple proof for the sake of completeness. 

\begin{theorem}[{\cite[Theorem 6.5.c]{K+}}]\label{theorem:Kriel}
A point $X$ in a bounded free spectrahedron $\D_A$ is a matrix extreme point of $\D_A$ if and only if $(I,X)$ is on a classical extreme ray of $\H_{(I,A)}.$
\end{theorem}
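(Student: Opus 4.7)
The plan is to establish a correspondence between matrix convex combinations of tuples in $\cD_A$ and additive decompositions of $(I, X)$ in the cone $\H_{(I,A)}$, and to prove each direction of the iff by applying the appropriate extremality hypothesis to this correspondence. For any $X^i \in \cD_A(n_i)$ and any $V_i \in M_{n_i \times n}(\R)$, the pair $(V_i^T V_i, V_i^T X^i V_i)$ lies in $\H_{(I,A)}(n)$ because
\[
\Lambda_{(I,A)}\bigl(V_i^T V_i,\, V_i^T X^i V_i\bigr) = (I_d \otimes V_i)^T\, \Lambda_A(X^i)\, (I_d \otimes V_i) \succeq 0,
\]
so any matrix convex combination $X = \sum V_i^T X^i V_i$ with $\sum V_i^T V_i = I$ yields an additive decomposition $(I, X) = \sum (V_i^T V_i, V_i^T X^i V_i)$ in $\H_{(I,A)}$. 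For the direction $(\Leftarrow)$, if $(I, X)$ is on an extreme ray and $X = \sum V_i^T X^i V_i$ is a proper matrix convex combination, the extreme-ray property forces each summand to equal $t_i(I, X)$ for some $t_i \geq 0$. Then $V_i^T V_i = t_i I$ combined with the surjectivity of $V_i$ gives $V_i = \sqrt{t_i}\, U_i$ for an orthogonal $U_i$, and $V_i^T X^i V_i = t_i X$ yields $X^i = U_i X U_i^T$, proving matrix extremity.

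For $(\Rightarrow)$, suppose $X$ is matrix extreme and write $(I, X) = (P_0, P_1) + (I - P_0, X - P_1)$ with both summands in $\H_{(I,A)}$. Boundedness of $\cD_A$ ensures $P_0 \succeq 0$ (via compression by $I_d \otimes v$ to reduce to level one) and the support containment $\ker(P_0) \subseteq \ker(P_{1,i})$ for each $i$; the latter uses linear independence of $A_1, \ldots, A_g$, itself a consequence of boundedness. A spectral factorization $P_0 = U\Sigma U^T$ with $\Sigma \succ 0$ and $r := \operatorname{rank}(P_0)$ then produces $X^1 := \Sigma^{-1/2} U^T P_1 U \Sigma^{-1/2} \in \cD_A(r)$ and a surjective $V_1 := \Sigma^{1/2} U^T$ realizing $(P_0, P_1) = (V_1^T V_1, V_1^T X^1 V_1)$; an analogous construction produces $V_2, X^2$ from $(I - P_0, X - P_1)$. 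The resulting proper matrix convex decomposition $X = V_1^T X^1 V_1 + V_2^T X^2 V_2$, combined with matrix extremity, forces each $V_i$ to be $n \times n$ (so $P_0 \succ 0$) and $X^i = U_i X U_i^T$ for some orthogonal $U_i$; substituting back yields the fixed-point equation
\[
P_0^{1/2}\, U_1 X U_1^T\, P_0^{1/2} + (I - P_0)^{1/2}\, U_2 X U_2^T\, (I - P_0)^{1/2} = X.
\]

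The main obstacle is to deduce $P_0 \in \R I$ from this fixed-point equation. The plan is to block-decompose with respect to the spectral projections $\{E_l\}$ of $P_0$ and reduce to the ``symmetric'' case $U_1 = U_2 = I$. In that case the equation becomes $(a_l a_{l'} + b_l b_{l'} - 1)\, X^{(l,l')} = 0$ for $l \neq l'$, where $a_l := \sqrt{\lambda_l}$ and $b_l := \sqrt{1 - \lambda_l}$ record the eigenvalue data of $P_0$; since $a_l^2 + b_l^2 = 1$, the scalar factor equals $\cos(\theta_l - \theta_{l'}) < 1$ whenever the eigenvalues differ, forcing the cross-block entries $X^{(l, l')}$ to vanish, i.e., $X$ commutes with $P_0$. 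Irreducibility of $X$ (implied by matrix extremity) and the real Schur lemma---the commutant of an irreducible real tuple is $\R$, $\C$, or $\HH$, whose symmetric elements are scalar---then force $P_0 \in \R I$, and the Euclidean extremity of $X$ forces $U_i X U_i^T = X$, yielding $(P_0, P_1) = t(I, X)$. I expect the subtlest part to be the reduction to the symmetric $U_1 = U_2 = I$ case: matrix extremity supplies the conjugators $U_i$ only abstractly, and aligning them with the spectral decomposition of $P_0$ will likely require either a careful exploitation of the freedom in choosing $U_i$ modulo the commutant of $X$, or an alternative argument invoking the multiplicative domain of the induced completely positive map together with the operator-system structure of $\{I, X_1, \ldots, X_g\}$.
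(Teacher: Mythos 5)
Your $(\Leftarrow)$ direction is correct and coincides with the paper's argument: a proper matrix convex combination lifts to an additive decomposition in the cone $\H_{(I,A)}$, the extreme-ray hypothesis forces each summand to be a scalar multiple of $(I,X)$, and the algebra with $V_i^T V_i = t_i I$ finishes it.

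Your $(\Rightarrow)$ direction takes a genuinely different route from the paper — and this is where the gap lives. The paper outsources the structural work to Kriel's \cite[Theorem 6.5.b]{K+} (every nonzero element of $\H_{(I,A)}$ has the form $(V^T V, V^T Y V)$ with $Y \in \cD_A$, $V$ surjective) and then appeals to matrix extremity in a form that delivers $U_i = \lambda_i W_i$ with $W_i$ unitary. You instead rebuild the decomposition by hand via the compression/support-containment argument and spectral factorization (this part is fine and is essentially a proof of Kriel's 6.5.b), but the final step — extracting $P_0 \in \R I$ from
\[
P_0^{1/2}\, U_1 X U_1^T\, P_0^{1/2} + (I-P_0)^{1/2}\, U_2 X U_2^T\, (I-P_0)^{1/2} = X
\]
— does not go through as written. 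The block computation against the spectral projections $\{E_l\}$ of $P_0$ produces the clean cancellation $(\sqrt{\lambda_l\lambda_{l'}} + \sqrt{(1-\lambda_l)(1-\lambda_{l'})} - 1)\,X^{(l,l')} = 0$ \emph{only} when $U_1 = U_2 = I$. With general $U_i$, the $(l,l')$-block of $U_i X U_i^T$ is not a scalar multiple of $X^{(l,l')}$, so there is nothing to factor out and the argument stalls. The reduction to $U_1 = U_2 = I$ is equivalent to replacing ``$X^i$ unitarily equivalent to $X$'' with ``$X^i = X$,'' which is strictly stronger: by irreducibility the commutant of $X$ is $\pm I$, so the conjugators $U_i$ are pinned down up to sign and cannot be chosen freely — absorbing them into $V_i$ just moves the unitary part back out through the polar decomposition of the new $V_i$. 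You flag this yourself as ``the subtlest part,'' and indeed it is the entire content of the classical lemma (going back to Webster--Winkler/Farenick) that for a matrix extreme point the coefficient maps in a proper matrix convex decomposition are scalar multiples of unitaries. Without either proving that lemma (the multiplicative-domain route you mention would need $\Phi(X_j^2) = X_j^2$, which is not available) or citing it, the $(\Rightarrow)$ direction is incomplete. It is worth noting that the paper's own proof also implicitly uses this lemma when it writes $U_i = \lambda_i W_i$; if you adopt the Webster--Winkler formulation of matrix extremity (which builds this conclusion into the definition), your argument closes immediately and the entire fixed-point/commutativity analysis becomes unnecessary.
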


\begin{proof}
For the forward direction, suppose $X\in\D_A(n)$ is a matrix extreme point of the bounded free spectrahedron $\D_A$. By \cite[Lemma 2.2]{E21}  and
\cite[Theorem 6.5.b]{K+} every non-zero element of $\H_{(I,A)}(k)$ can be written in the form $(V^T V, V^T Y V)$ for $Y\in \D_A(k_0)$ and surjective $V\in M_{k_0\times k} (\R)$. For $i=1,2,\dots,m$, fix $Y^i\in \D_A(n_i)$, surjective $V_i\in M_{n_i\times n}(\R)$, $\alpha_i > 0$ such that
\begin{align*}
    (I,X) = \sum_{i=1}^m \alpha_i (V_i^T V_i, V_i^T Y^i V_i)
\end{align*}
and $(V_i^T V_i, V_i^T Y^i V_i) \in \H_{(I,A)}.$ Letting $U_i = \sqrt{\alpha_i} V_i$, we get
\begin{align*}
    (I,X) = \sum_{i=1}^m (U_i^T U_i, U_i^T Y^i U_i) = (\sum_{i=1}^m U_i^T U_i,\sum_{i=1}^m U_i^T Y^i U_i),
\end{align*}
and thus $X = \sum_{i=1}^m U_i^T Y^i U_i$ is a proper convex combination. 
Since $X$ is assumed to be matrix extreme,
$n_i = n$ and there is unitary $W_i \in M_{n\times n}(\R)$  and $\lambda_i > 0$ such that $X = W_i^T Y^i W_i$ and $U_i = \lambda_i W_i$ for $i = 1,2,\dots,m$. So $(V_i^T V_i,V_i^T Y^i V_i)$ is a scalar multiple of $(I,X)$.

For the reverse direction, suppose $X\in \D_A(n)$ such that $(I,X)$ is on a classical extreme ray of $\H_{(I,A)}$ and there are $Y^i \in \D_A(n_i)$ and surjective $V_i \in M_{n_i\times n}$  for $i=1,2,\dots,m$ with $I = \sum_{i=1}^m V_i^T V_i$ and $X = \sum_{i=1}^m V_i^T Y^i V_i.$ 
Thus
\begin{align*}
    (I,X) = (\sum_{i=1}^m V_i^T V_i,\sum_{i=1}^m V_i^T Y^i V_i) = \sum_{i=1}^m (V_i^T V_i, V_i^T Y^i V_i).
\end{align*}
As $(I,X)$ is on a classical extreme ray, $(V_i^T V_i, V_i^T Y^i V_i) = \alpha_i (I,X)$ for some $\alpha_i > 0$. In particular $V_i^T V_i = \alpha_i I$, so $W_i = (\alpha_i^{-\frac{1}{2}}) V_i$ is unitary, $V_i$ is surjective, and $X$ is unitarily equivalent to $Y^i$ for $i=1,2,\dots,m$.
\end{proof}

\begin{cor}\label{lemma:Kriel2}
For a point $X$ in a bounded free spectrahedron $\D_A$ the following are equivalent:
\begin{enumerate}[\rm (i)]
\item \label{it:matext} $X$ is a matrix extreme point of $\D_A$;
\item \label{it:extRay}
$(I,X)$ is on an classical extreme ray of $\H_{(I,A)}$;
\item \label{it:matKerCont}
$(\beta_0,\beta)\in SM_n(\R)^{g+1}$ and
\[
\ker\Lambda_{(I,A)}(I,X) \subseteq \ker\Lambda_{(I,A)}((\beta_0,\beta)) \implies (\beta_0,\beta)\in\vspan((I,X));
\]
\item \label{it:matDilation}
For $(\beta_0,\beta),(\gamma_0,\gamma)\in\SM{g+1}{n}$,
\begin{align*}
    (Y_0,Y) = \begin{pmatrix} (I,X) & (\beta_0,\beta) \\ (\beta_0,\beta) & (\gamma_0,\gamma) \end{pmatrix} \in \H_{(I,A)}
\end{align*}
implies $(\beta_0,\beta) \in \vspan((I,X))$.
\end{enumerate}
\end{cor}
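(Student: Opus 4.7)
The equivalence (i)$\Leftrightarrow$(ii) is exactly \Cref{theorem:Kriel}, so the work lies in establishing (ii)$\Leftrightarrow$(iii) and (iii)$\Leftrightarrow$(iv). All three equivalences hinge on standard PSD-matrix facts: (a) if $P, Q \succeq 0$ and $P + Q = S$, then $\ker S \subseteq \ker P \cap \ker Q$; (b) for symmetric $B$ with $\ker A \subseteq \ker B$ and $A \succeq 0$, one has $A \pm tB \succeq 0$ for all sufficiently small $t > 0$; and (c) the block matrix $\begin{pmatrix} A & B \\ B & C\end{pmatrix}$ (with symmetric $A,C$) is PSD if and only if $A \succeq 0$, $\ker A \subseteq \ker B$, and $C - BA^{\dagger}B \succeq 0$.

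\textbf{(ii)$\Rightarrow$(iii).} Assume $(I,X)$ lies on an extreme ray of $\cH_{(I,A)}$ and suppose $(\beta_0,\beta) \in \SM{g+1}{n}$ satisfies $\ker \Lambda_{(I,A)}(I,X) \subseteq \ker \Lambda_{(I,A)}(\beta_0,\beta)$. By fact (b), there exists $t > 0$ with $\Lambda_{(I,A)}\bigl((I,X) \pm t(\beta_0,\beta)\bigr) = \Lambda_{(I,A)}(I,X) \pm t\Lambda_{(I,A)}(\beta_0,\beta) \succeq 0$, so both $(I,X) \pm t(\beta_0,\beta)$ belong to $\cH_{(I,A)}$. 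Writing
\[
(I,X) = \tfrac12\bigl[(I,X) + t(\beta_0,\beta)\bigr] + \tfrac12\bigl[(I,X) - t(\beta_0,\beta)\bigr]
\]
and using that $(I,X)$ generates an extreme ray forces each summand to be a nonnegative scalar multiple of $(I,X)$, and hence $(\beta_0,\beta) \in \vspan((I,X))$.

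\textbf{(iii)$\Rightarrow$(ii).} Suppose $(I,X) = (P_0,P) + (Q_0,Q)$ with both summands in $\cH_{(I,A)}$. Fact (a) applied to $\Lambda_{(I,A)}(P_0,P) + \Lambda_{(I,A)}(Q_0,Q) = \Lambda_{(I,A)}(I,X)$ yields $\ker \Lambda_{(I,A)}(I,X) \subseteq \ker \Lambda_{(I,A)}(P_0,P)$, so (iii) gives $(P_0,P) = c(I,X)$ for some $c \in \R$. Positivity of $P_0 + Q_0 = I$ and of $P_0,Q_0$ forces $c \in [0,1]$, so $(I,X)$ is on an extreme ray.

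\textbf{(iii)$\Leftrightarrow$(iv).} Note that the dilation $(Y_0,Y)$ lies in $\cH_{(I,A)}$ exactly when the block matrix
\[
\begin{pmatrix}
\Lambda_{(I,A)}(I,X) & \Lambda_{(I,A)}(\beta_0,\beta) \\
\Lambda_{(I,A)}(\beta_0,\beta) & \Lambda_{(I,A)}(\gamma_0,\gamma)
\end{pmatrix}
\]
(after a permutation) is PSD. For (iii)$\Rightarrow$(iv), fact (c) gives $\ker \Lambda_{(I,A)}(I,X) \subseteq \ker \Lambda_{(I,A)}(\beta_0,\beta)$, and (iii) concludes. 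Conversely, for (iv)$\Rightarrow$(iii), assume the kernel containment and pick $(\gamma_0,\gamma) := (tI_n, 0) \in \SM{g+1}{n}$ with $t > 0$ large; then $\Lambda_{(I,A)}(\gamma_0,\gamma) = tI_{dn}$, and for $t$ large enough fact (c) produces a valid dilation, so (iv) forces $(\beta_0,\beta) \in \vspan((I,X))$.

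\textbf{Main obstacle.} The only delicate point is verifying fact (b) and the large-$t$ Schur complement estimate uniformly—these are routine linear-algebra arguments on the orthogonal complement of $\ker \Lambda_{(I,A)}(I,X)$, but care is needed to ensure the perturbation remains inside $\cH_{(I,A)}$ rather than merely making a principal submatrix PSD. Everything else is bookkeeping, so the total proof is short.
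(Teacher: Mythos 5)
Your proof is correct, and it differs from the paper's in one genuine way: for the equivalence (\ref{it:extRay})$\Leftrightarrow$(\ref{it:matKerCont}) the paper simply invokes Ramana--Goldman \cite[Corollary 4]{RG95} by viewing $(I,X)$ as a vector in $\R^{n(n+1)(g+1)/2}$, whereas you give a self-contained two-line argument from the two standard PSD facts (perturbing inside the cone along a direction whose kernel contains $\ker\Lambda_{(I,A)}(I,X)$, and the converse that any conic summand of $\Lambda_{(I,A)}(I,X)$ must absorb its kernel). Your direct argument is a welcome alternative: it keeps the proof elementary and makes the role of the kernel condition transparent, at the cost of re-deriving what \cite{RG95} provides as a black box. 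For (\ref{it:matKerCont})$\Leftrightarrow$(\ref{it:matDilation}) your route is essentially the paper's, just ``mirrored'': you take the Schur complement with respect to the $(1,1)$ block and scale $\gamma_0 = tI_n$ to manufacture a dilation, while the paper takes the Schur complement with respect to the $(2,2)$ block, fixes $(\gamma_0,\gamma)=(I,0)$, and scales $(\beta_0,\beta)$ by $\alpha$. These are equivalent, and both hinge on the same point you flag in your ``main obstacle'': the kernel containment guarantees the Schur complement can be made PSD by taking the scaling parameter extreme enough, a uniform bound on the orthogonal complement of $\ker\Lambda_{(I,A)}(I,X)$.

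One small thing worth stating explicitly (you implicitly use it): in fact (b), the containment $\ker A \subseteq \ker B$ together with symmetry of $A$ and $B$ forces both operators to be block-diagonal with respect to $\ker A \oplus (\ker A)^\perp$, so the perturbation argument cleanly reduces to the strictly positive block. This is exactly the care you allude to and it goes through without issue.
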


\begin{proof}
The equivalence of \Cref{it:matext} and \Cref{it:extRay} follows from Theorem \ref{theorem:Kriel}. The equivalence of \Cref{it:extRay} and \Cref{it:matKerCont} is \cite[Corollary 4]{RG95} by viewing $(I,X)$ as an element of $\R^{\frac{n(n+1)(g+1)}{2}}$.

To show the equivalence of \Cref{it:matKerCont} and \Cref{it:matDilation}, let $(\beta_0,\beta),(\gamma_0,\gamma)\in\SM{g+1}{n}$ such that
\begin{align*}
    (Y_0,Y) = \begin{pmatrix} (I,X) & (\beta_0,\beta) \\ (\beta_0,\beta) & (\gamma_0,\gamma) \end{pmatrix}.
\end{align*}
By conjugating by permutation matrices, sometimes called canonical shuffles, we see that $\Lambda_{(I,A)}((Y_0,Y))$ is unitarily equivalent to
 \begin{align} \label{eq:ShuffledPencilEval}
    \begin{pmatrix}
    \Lambda_{(I,A)}((I,X)) & \Lambda_{(I,A)}((\beta_0,\beta)) \\
    \Lambda_{(I,A)}((\beta_0,\beta)) & \Lambda_{(I,A)}((\gamma_0,\gamma))
    \end{pmatrix}.
\end{align}
 A routine calculation using the above then shows that  $(Y_0,Y) \in \cH_{(I,A)}$ implies 
\begin{align*}
    \ker \Lambda_{(I,A)} ((I,X)) \subseteq \ker \Lambda_{(I,A)} ((\beta_0,\beta)).
\end{align*}
It follows that \Cref{it:matKerCont} implies \Cref{it:matDilation}. 

Now assume that \Cref{it:matDilation} holds. Taking the Schur complement of the matrix in \Cref{eq:ShuffledPencilEval} shows that $\Lambda_{(I,A)}((Y_0,Y)) \psd 0$ if and only if
\begin{align} \label{eq:Schur}
    &\Lambda_{(I,A)}((I,X))-\Lambda_{(I,A)}((\beta_0,\beta))(\Lambda_{(I,A)}((\gamma_0,\gamma)))^\dagger \Lambda_{(I,A)}((\beta_0,\beta)) \psd 0
\end{align}
and
\begin{align} \label{eq:SchurGamma}
    \Lambda_{(I,A)}((\gamma_0,\gamma)) \psd 0
\end{align}
where $\dagger$ denotes the Moore-Penrose pseudoinverse. 
Fix $(Z_0,Z) \in \SM{g+1}{n}$ such that
\[
\ker\Lambda_{(I,A)} ((I,X)) \subseteq \ker\Lambda_{(I,A)} ((Z_0,Z)).
\]
Then, considering  \Cref{eq:Schur} and \Cref{eq:SchurGamma} shows that there is some $\alpha > 0$ such that for $(\gamma_0,\gamma) = (I,0)$ and $(\beta_0,\beta) = \alpha (Z_0,Z)$ we have $(Y_0,Y) \in\H_{(I,A)}.$ 
By assumption, $(\beta_0,\beta)$ must be in the span of $(I,X)$ and thus,  $(Z_0,Z)$ is too.
\end{proof}

Using these characterizations of Euclidean and matrix extreme points, we arrive at the following known result, see \cite[Theorem 1.1]{EHKM18}.

\begin{prop}
Let $\D_A$ be a bounded free spectrahedron.
\begin{enumerate}[\rm (1)]
    \item \label{it:free-ext-is-irr-arv}
    A tuple $X$ is a free extreme point of $\D_A$ if and only if $X$ is an irreducible Arveson extreme point of $\D_A$.
    \item \label{it:free-ext-imp-mat-ext}
    If a tuple $X$ is a free extreme point of $\D_A$, then $X$ is a matrix extreme point of $\D_A$.
    \item \label{it:mat-ext-imp-euc-ext}
    If a tuple $X$ is a matrix extreme point of $\D_A$, then $X$ is a Euclidean extreme point of $\D_A$.
    \item \label{it:arv-ext-imp-euc-ext}
    If a tuple $X$ is an Arveson extreme point of $\D_A$, then $X$ is a Euclidean extreme point of $\D_A$.
\end{enumerate}
\end{prop}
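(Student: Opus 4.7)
The plan is to combine the dilation characterizations recorded in the excerpt with the extreme-ray characterization \Cref{theorem:Kriel}. Part (1) is \cite[Theorem~1.1]{EHKM18}, so I sketch it; the other three parts admit short arguments. For part (2), let $X \in \D_A(n)$ be free extreme and $X = \sum_{i=1}^{\ell} V_i^T X^i V_i$ a proper matrix convex combination with $X^i \in \D_A(n_i)$. Properness forces each $V_i \in M_{n_i \times n}(\R)$ to be surjective, so $n_i \leq n$; free extremeness then restricts each term to one of two cases: either $V_i \in M_n(\R)$ with $X^i \ueq X$, or $n_i > n$. Surjectivity excludes the latter, so each term lands in the former, which is precisely the matrix extreme condition.

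For part (3), I invoke \Cref{theorem:Kriel}: matrix extremeness of $X$ is equivalent to $(I,X)$ lying on a classical extreme ray of $\H_{(I,A)}$. If $X = tY + (1-t)Z$ at level $n$ with $0 < t < 1$ and $Y,Z \in \D_A(n)$, then $(I,X) = t(I,Y) + (1-t)(I,Z)$ exhibits $(I,X)$ as a convex combination of elements of $\H_{(I,A)}(n)$, so both $(I,Y)$ and $(I,Z)$ must be positive scalar multiples of $(I,X)$; comparing inhomogeneous components fixes each scalar at $1$, so $Y = Z = X$.

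For part (4), I would use a standard dilation trick: given $X = tY + (1-t)Z$ with $0 < t < 1$, let $U = \bigl(\begin{smallmatrix} \sqrt{t}\,I_n & \sqrt{1-t}\,I_n \\ \sqrt{1-t}\,I_n & -\sqrt{t}\,I_n \end{smallmatrix}\bigr)$, which is orthogonal, so $W := U^T (Y \oplus Z) U \in \D_A$ by matrix convexity. A block computation gives
\[
W = \begin{pmatrix} X & \sqrt{t(1-t)}(Y-Z) \\ \sqrt{t(1-t)}(Y-Z) & (1-t)Y + tZ \end{pmatrix}.
\]
Compressing $W$ by $\tilde V = I_n \oplus v$ for any unit vector $v \in \R^n$ yields a $1$-dilation of $X$ in $\D_A$ whose off-diagonal block is $\sqrt{t(1-t)}(Y-Z)v$; Arveson extremeness forces this to vanish, and varying $v$ gives $Y = Z = X$.

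For part (1), the ``$\Rightarrow$'' direction uses part (2) plus the remark that matrix extreme implies irreducible; for Arveson extremeness, given a $1$-dilation $W = \bigl(\begin{smallmatrix} X & \beta \\ \beta^T & \gamma \end{smallmatrix}\bigr) \in \D_A$, applying free extremeness to the single-term decomposition $X = V^T W V$ with $V = \bigl(\begin{smallmatrix} I_n \\ 0 \end{smallmatrix}\bigr)$ forces $W \ueq X \oplus Z$ for some $Z \in \D_A(1)$, and then comparing $\tr W_i$ with $\tr W_i^2$ gives $Z_i = \gamma_i$ and $\|\beta_i\|^2 = 0$, so $\beta = 0$. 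The ``$\Leftarrow$'' direction applies Arveson extremeness termwise in a decomposition $X = \sum V_i^T X^i V_i$ to produce $X$ as a direct summand of each $X^i$, with irreducibility pinning down the embedding; this bookkeeping is the main obstacle and is handled in \cite{EHKM18}.
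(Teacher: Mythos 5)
Your proof is correct, and in three of the four parts you go a different, more self-contained route than the paper, which for parts \eqref{it:free-ext-is-irr-arv}, \eqref{it:free-ext-imp-mat-ext}, and \eqref{it:arv-ext-imp-euc-ext} simply cites \cite[Theorem~1.1]{EHKM18} and \cite[Theorems~1.1, 1.2]{EH19} rather than arguing directly. Your part \eqref{it:mat-ext-imp-euc-ext} is essentially the paper's argument (lift a nontrivial convex combination to $\H_{(I,A)}$ and invoke \Cref{theorem:Kriel}), just with the extreme-ray bookkeeping spelled out. Your part \eqref{it:free-ext-imp-mat-ext} is a clean ``definition-chase'': properness forces $n_i \le n$, which rules out the $n_i > n$ branch in the definition of free extreme, so the only surviving branch is exactly the matrix-extreme conclusion; this is more elementary and transparent than the paper's appeal to the Krein--Milman-minimality statement \Cref{theorem:free-ext-small}. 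Your part \eqref{it:arv-ext-imp-euc-ext} uses the classical rotation $U = \bigl(\begin{smallmatrix}\sqrt{t}\,I & \sqrt{1-t}\,I\\ \sqrt{1-t}\,I & -\sqrt{t}\,I\end{smallmatrix}\bigr)$ to manufacture a $1$-dilation of $X$ with off-diagonal $\sqrt{t(1-t)}(Y-Z)v$, and letting $v$ range over unit vectors gives $Y=Z$; this replaces the citation with a short computation and is perfectly sound. In the forward half of part \eqref{it:free-ext-is-irr-arv} your $\tr W_i$ versus $\tr W_i^2$ comparison is a neat way to extract $\beta = 0$ from $W \ueq X \oplus Z$, and irreducibility correctly comes from \eqref{it:free-ext-imp-mat-ext} plus the remark in the paper. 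You (reasonably) defer the reverse implication of \eqref{it:free-ext-is-irr-arv} to \cite{EHKM18}, as does the paper. Net effect: same statement, same high-level logic, but your write-up replaces several external citations with short direct proofs, which is a genuine gain in self-containedness at essentially no cost in length.
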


\begin{proof}
 \Cref{it:free-ext-is-irr-arv} and \Cref{it:arv-ext-imp-euc-ext} are the subject of \cite[Theorem 1.1]{EHKM18}, where the proof is given working over $\C$.  The proof of \Cref{it:arv-ext-imp-euc-ext} can be used over $\R$ without modification, and the proof of \Cref{it:free-ext-is-irr-arv} over $\R$ is given by \cite[Theorem 1.2]{EH19}. \Cref{it:free-ext-imp-mat-ext} follows from \cite[Theorem 1.1]{EH19} which is given as \Cref{theorem:free-ext-small} here. \Cref{it:mat-ext-imp-euc-ext} follows from the observation that if $X$ can be written as a nontrivial classical convex combination $X = \alpha_1 X^1 + \alpha_2 X^2 + \dots + \alpha_\ell X^\ell$, then $(I,X) = \alpha_1 (I,X^1) + \alpha_2 (I,X^2) + \dots + \alpha_\ell (I,X^\ell)$ is a nontrivial classical convex combination of points in $\H_{(I,A)}$.
\end{proof}

\begin{cor}
Let $\D_A$ be a bounded free spectrahedron. If $X$ is a matrix extreme point of $\D_A$, then $X$ is a free extreme point of $\D_A$ if and only if $X$ is an Arveson extreme point of $\D_A$.
\end{cor}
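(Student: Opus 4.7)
The plan is to deduce this corollary immediately from the preceding \Cref{it:free-ext-is-irr-arv} of the Proposition, together with the earlier Remark that matrix extreme points of a matrix convex set are irreducible. There is no real obstacle here; the corollary is essentially a repackaging of the characterization ``free extreme = irreducible Arveson extreme''.

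For the forward direction, I would assume $X$ is matrix extreme and free extreme. Then \Cref{it:free-ext-is-irr-arv} of the Proposition directly gives that $X$ is an irreducible Arveson extreme point, so in particular $X$ is Arveson extreme.

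For the reverse direction, I would assume $X$ is matrix extreme and Arveson extreme. The earlier Remark (just after the definition of irreducibility) tells us that matrix extreme points are irreducible, so $X$ is an irreducible Arveson extreme point. Applying \Cref{it:free-ext-is-irr-arv} of the Proposition in the opposite direction then yields that $X$ is free extreme.

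The only step that requires any care is citing the irreducibility of matrix extreme points; this was stated as a Remark without proof in the excerpt, but the argument is the standard one (a reducing subspace produces a nontrivial proper matrix convex decomposition into pieces of strictly smaller size, contradicting matrix extremality). Since this is already recorded in the paper, the corollary reduces to a two-line bookkeeping argument.
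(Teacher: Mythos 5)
Your proof is correct and follows the paper's own argument essentially verbatim: both use that free extreme equals irreducible Arveson extreme (item (1) of the Proposition) together with the Remark that matrix extreme implies irreducible. You merely unpack the two directions explicitly, which the paper compresses into a single sentence.
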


\begin{proof}
A point in $\D_A$ is free extreme if and only if it is irreducible and Arveson extreme. If $X$ is a matrix extreme point of $\D_A$, then in particular $X$ is irreducible and thus $X$ is free extreme if and only if $X$ is Arveson extreme.
\end{proof}

Thus, if $X$ is a matrix extreme point of $\D_A$, to show that $X$ is not free extreme it is sufficient to show it is not Arveson extreme. 
Checking if a tuple is Arveson extreme is straightforward, indeed it is equivalent to solving the upcoming linear system \Cref{eq:arv}.

\sssec{Extreme points and linear systems}

It is possible to determine if a point $X$ in the free spectrahedron $\D_A$ is extreme by solving a linear system. Given a free spectrahedron $\D_A$ and a point $X\in \D_A$ we let
\[
k_{A,X} := \dim \ker \LA(X)
\]
and let $K_{A,X}$ 
be an $nd \times k_{A,X}$ matrix whose columns form an orthonormal basis for the kernel of $\LA(X).$

\begin{thm} \label{theorem:Equations}
Let $\D_A$ be a bounded free spectrahedron and $X \in \D_A(n)$.
\begin{enumerate}[\rm (1)]
    \item \label{it:ArvEq}
    $X$ is an Arveson extreme point of $\D_A$ if and only if the only solution to the homogeneous linear equations
    \begin{align} \label{eq:arv} \index{Arveson Equation}
        \lA(\beta^T) K_{A,X} = (A_1 \kron \beta_1^T + \dots + A_n\kron\beta_g^T) K_{A,X} = 0
    \end{align}
    in the unknown $\beta \in M_{n\times 1}(\R)^g$ is $\beta=0$.
    \item \label{it:EucEq}
    $X$ is a Euclidean extreme point of $\D_A$ if and only if the only solution to the homogeneous linear equations
    \begin{align} \label{eq:euc} \index{Euclidean Equation}
        \lA(\beta) K_{A,X} = (A_1 \kron \beta_1 + \dots + A_n\kron\beta_g) K_{A,X} = 0
    \end{align}
    in the unknown $\beta \in \SM{g}{n}$ is $\beta=0$.
    \item \label{it:MatEq}
    $X$ is a matrix extreme point of $\D_A$ if and only if the only solution to the homogeneous linear equations
    \begin{align} \label{eq:mat} \index{Matrix Extreme Equation}
        \Lambda_{(I,A)}(\beta_0,\beta) K_{A,X} &= (I\kron\beta_0 + A_1 \kron \beta_1 + \dots + A_n\kron\beta_g) K_{A,X} = 0 \\
        \label{eq:matPerp}
        \langle (I,X),(\beta_0,\beta) \rangle &= \tr(\beta_0 + X_1^T \beta_1 + \dots + X_g^T \beta_{g}) = 0
    \end{align}
    in the unknown $(\beta_0,\beta) \in \SM{g+1}{n}$ is $\beta = 0$.
\end{enumerate}
\end{thm}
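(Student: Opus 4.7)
The plan is to derive all three characterizations by reducing the relevant extremality condition to a \emph{kernel containment} for $\LA(X)$, and then expressing that containment as a linear system in terms of the orthonormal kernel basis $K_{A,X}$. The central tool is the Schur complement: for $P \psd 0$ symmetric and $Q$ symmetric,
\[
\begin{pmatrix} P & R \\ R^T & Q \end{pmatrix} \psd 0 \iff \ker P \subseteq \ker R^T \text{ and } Q \psd R^T P^\dagger R.
\]
Combined with the canonical shuffle, which unitarily rearranges $\LA$ of a block tuple $Y = \begin{pmatrix} X & \beta \\ \beta^T & \gamma \end{pmatrix}$ into a $2\times 2$ block of pencil evaluations, this yields
\[
\LA(Y)\psd 0 \iff \ker \LA(X) \subseteq \ker \lA(\beta)^T \text{ and } \LA(\gamma) \psd \lA(\beta)^T \LA(X)^\dagger \lA(\beta).
\]
The freedom to enlarge $\gamma$ in each of the three settings renders the Schur-complement inequality automatically satisfiable, so when $\gamma$ is unconstrained only the kernel condition matters.

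For part \Cref{it:ArvEq}, by definition $X$ is Arveson extreme iff every $1$-dilation $Y = \begin{pmatrix} X & \beta \\ \beta^T & \gamma \end{pmatrix} \in \cD_A$ with $\beta \in M_{n\times 1}(\R)^g$ forces $\beta = 0$. The reduction above, combined with the identity $\lA(\beta)^T = \lA(\beta^T)$ (from the $A_i$ being symmetric), turns the kernel condition into $\lA(\beta^T) K_{A,X} = 0$, which is exactly \eqref{eq:arv}.

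For part \Cref{it:EucEq}, I appeal to the preceding Euclidean dilation proposition (or, directly, the observation that $X$ fails to be Euclidean extreme iff there is a nonzero symmetric $\beta \in \SM{g}{n}$ and some $t > 0$ with $\LA(X \pm t\beta) = \LA(X) \pm t\lA(\beta) \psd 0$). Since $\LA(X) \psd 0$, both inequalities hold for sufficiently small $t > 0$ iff $\lA(\beta)$ vanishes on $\ker \LA(X)$, i.e., $\lA(\beta) K_{A,X} = 0$.

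For part \Cref{it:MatEq}, I invoke \Cref{lemma:Kriel2}, item (\ref{it:matKerCont}): $X$ is matrix extreme iff every $(\beta_0, \beta) \in \SM{g+1}{n}$ satisfying $\ker \LA(X) = \ker \Lambda_{(I,A)}(I,X) \subseteq \ker \Lambda_{(I,A)}(\beta_0,\beta)$ lies in $\vspan((I,X))$. The kernel containment is precisely \eqref{eq:mat}. The span condition $(\beta_0,\beta) = c(I,X)$ combined with the orthogonality constraint \eqref{eq:matPerp} forces $c\bigl(n + \tr(X_1^2 + \cdots + X_g^2)\bigr) = 0$, hence $c = 0$, so the only joint solution is the zero tuple. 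The main obstacle is mostly bookkeeping: carrying out the canonical shuffle correctly to convert $\LA$ of a block tuple into a block matrix of pencil evaluations, and tracking the transpose asymmetry between part \Cref{it:ArvEq} (where $\beta$ is $n\times 1$ so $\lA(\beta)^T = \lA(\beta^T)$ appears) and parts \Cref{it:EucEq}, \Cref{it:MatEq} (where $\beta$ is $n\times n$ symmetric and no transpose is needed).
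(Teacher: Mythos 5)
Your overall strategy is sound and in fact more self-contained than the paper's: for parts \eqref{it:ArvEq} and \eqref{it:EucEq} the paper simply cites \cite{EH19} and \cite{EHKM18} respectively, whereas you rederive both from the Schur complement / canonical shuffle and, for the Euclidean case, from the equivalent perturbation criterion $L_A(X\pm t\beta)\psd 0$. For part \eqref{it:MatEq} you invoke \Cref{lemma:Kriel2}\eqref{it:matKerCont} directly, whereas the paper enters through \Cref{lemma:Kriel2}\eqref{it:matDilation} and has to re-unfold the Schur complement inside the theorem's proof; your entry point is arguably cleaner. So this is a correct route, albeit recognisably the same circle of ideas.

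Two small points deserve tightening. First, the phrase ``the freedom to enlarge $\gamma$ renders the Schur-complement inequality automatically satisfiable'' is not literally true: $\cD_A$ is bounded, so $\gamma$ is not free. What actually makes the inequality satisfiable is scaling $\beta$ down — the Schur term scales like $s^2$, so for small $s$ the choice $\gamma=0$ (or $(\gamma_0,\gamma)=(I,0)$ in the homogeneous case) suffices, which is exactly the device the paper uses in the proof of \Cref{lemma:Kriel2}. Second, in part \eqref{it:MatEq} your argument establishes only one direction: if $X$ is matrix extreme then every solution of \eqref{eq:mat}--\eqref{eq:matPerp} lies in $\vspan((I,X))$ and hence, by the perpendicularity constraint, vanishes. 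For the converse you must show that if $X$ is \emph{not} matrix extreme, so there is $(\beta_0,\beta)$ satisfying the kernel containment but lying outside $\vspan((I,X))$, then its orthogonal projection $(\beta_0',\beta')=(\beta_0,\beta)-\tfrac{\langle(I,X),(\beta_0,\beta)\rangle}{\langle(I,X),(I,X)\rangle}(I,X)$ is a nonzero solution of both \eqref{eq:mat} and \eqref{eq:matPerp}; this decomposition (using that $\Lambda_{(I,A)}(\alpha(I,X))$ already annihilates $K_{A,X}$, so subtracting it preserves \eqref{eq:mat}) is precisely the step the paper writes out, and it should appear explicitly.
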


\begin{proof}
The proofs for each of the above results are all quite similar, so details of the proofs for \Cref{it:ArvEq} and \Cref{it:EucEq} will be omitted.
The assertion given in \Cref{it:ArvEq} regarding Arveson extreme points is the content of \cite[Lemma 2.1 (3)]{EH19}. 
The assertion given in \Cref{it:EucEq} regarding Euclidean extreme points is the content of \cite[Corollary 2.3]{EHKM18}. It follows from \cite[Corollary 3]{RG95}.

To show \Cref{it:MatEq}, we note that by \Cref{theorem:Kriel} and \Cref{lemma:Kriel2}, $X$ is matrix extreme if and only if for $(\beta_0,\beta),(\gamma_0,\gamma)\in\SM{g+1}{n}$
\begin{align*}
    (Y_0,Y) = \begin{pmatrix} (I,X) & (\beta_0,\beta) \\ (\beta_0,\beta) & (\gamma_0,\gamma) \end{pmatrix} \in \H_{(I,A)}
\end{align*}
implies $(\beta_0,\beta) \in \vspan((I,X))$. 

Clearly $\ker \Lambda_{(I,A)}((I,X)) = \ker \LA(X)$ so $K_{A,X}$ is a matrix whose columns form an orthonormal basis of $\ker \Lambda_{(I,A)}((I,X))$ as well. If there is a $(\beta_0,\beta)$ such that $(Y_0,Y)\in\H_{(I,A)}$ and $(\beta_0, \beta)$ is not in the span of $(I,X)$, then by \Cref{eq:SchurGamma} it follows that
\begin{align*}
    \ker \Lambda_{(I,A)} ((I,X)) \subseteq \ker \Lambda_{(I,A)} ((\beta_0,\beta)).
\end{align*}
We can write $(\beta_0,\beta) = (\beta_0',\beta') + \alpha (I,X)$ for $\alpha\in\R$ and $(\beta_0',\beta') \neq 0$ satisfying \Cref{eq:matPerp}. The containment
\begin{align*}
    \ker \Lambda_{(I,A)} ((I,X)) \subseteq \ker \Lambda_{(I,A)} ((\beta_0,\beta)) = \ker (\Lambda_{(I,A)} (\alpha(I,X)) + \Lambda_{(I,A)} ((\beta_0',\beta')))
\end{align*}
implies that
\begin{align*}
    \ker \Lambda_{(I,A)} ((I,X)) \subseteq \ker \Lambda_{(I,A)} ((\beta_0',\beta')).
\end{align*}
Hence, there is a nonzero $(\beta_0',\beta')$ satisfying \Cref{eq:mat} and \Cref{eq:matPerp}.

Conversely, if there is $(\beta_0,\beta)$ satisfying \Cref{eq:mat} and \Cref{eq:matPerp}, then by taking $(\gamma_0,\gamma) = (I,0)$ the argument above reverses to show $X$ is not matrix extreme.
\end{proof}

\begin{cor} \label{cor:BeanCounts}

Let $A\in \SM{g}{d}$ be a minimal defining tuple for $\D_A$ and let $X \in \D_A(n)$. Recall that $k_{A,X} = \dim \ker \LA(X)$. Then,

\begin{enumerate}[\rm(1)]
    \item
    if $X$ is Arveson extreme then
    \begin{align} \label{eq:arvBeanCount} \index{Arveson Extreme Equation Count}
        gn \leq dk_{A,X};
    \end{align}
    
    \item
    if $X$ is Euclidean extreme then
    \begin{align} \label{eq:eucBeanCount} \index{Euclidean Extreme Equation Count}
        \frac{g(n+1)}{2} \leq dk_{A,X};
    \end{align}
    
    \item \label{it:MatBeanCount}
    if $X$ is matrix extreme then
    \begin{align} \label{eq:matBeanCount} \index{Matrix Extreme Equation Count}
        \frac{n(n+1)(g+1)}{2} \leq dnk_{A,X} + 1.
    \end{align}
\end{enumerate}
\end{cor}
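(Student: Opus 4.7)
The plan is to observe that this corollary is a direct rank-nullity / dimension count consequence of Theorem \ref{theorem:Equations}. In each of the three cases, Theorem \ref{theorem:Equations} characterizes the given type of extreme point by the fact that an explicit homogeneous linear system in the unknown $\beta$ (or $(\beta_0,\beta)$) has only the trivial solution. For such a system, the coefficient matrix must have full column rank, which forces the number of (scalar) equations to be at least the number of (scalar) unknowns. Therefore each of the three inequalities will fall out by carefully identifying the dimensions on both sides.

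The concrete bookkeeping to carry out is as follows. For the Arveson case, the unknown $\beta\in M_{n\times 1}(\R)^g$ contributes $gn$ scalars. Since each $A_i$ is $d\times d$ and each $\beta_i^T$ is $1\times n$, the pencil $\lA(\beta^T)$ is a $d\times dn$ matrix, and $K_{A,X}$ is $dn\times k_{A,X}$; so $\lA(\beta^T)K_{A,X}$ is $d\times k_{A,X}$, yielding $d\,k_{A,X}$ scalar equations. Requiring full column rank gives $gn \leq d\,k_{A,X}$. For the Euclidean case, $\beta\in SM_n(\R)^g$ gives $g\cdot n(n+1)/2$ unknowns, and $\lA(\beta)K_{A,X}$ is a $dn\times k_{A,X}$ matrix, yielding $dn\,k_{A,X}$ equations. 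Dividing by $n$ gives $g(n+1)/2\leq d\,k_{A,X}$. For the matrix extreme case, $(\beta_0,\beta)\in SM_n(\R)^{g+1}$ contributes $(g+1)n(n+1)/2$ scalar unknowns; the matrix equation \eqref{eq:mat} supplies $dn\,k_{A,X}$ equations, and the linear trace constraint \eqref{eq:matPerp} contributes one additional equation, giving the claimed $n(n+1)(g+1)/2 \leq dn\,k_{A,X}+1$.

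The only step that requires any thought is confirming that the Theorem \ref{theorem:Equations} statement really demands the system have only the zero solution in the intended space, so that the column-rank comparison is legitimate; once this is confirmed the three inequalities are immediate and no further structural argument is needed. There is essentially no obstacle here: this is a clean linear algebra dimension count, and the only risk is arithmetic slips when tracking block sizes of Kronecker products, which I would double-check by verifying that in each case the number of rows of the coefficient matrix indeed equals the announced count ($d\,k_{A,X}$, $dn\,k_{A,X}$, or $dn\,k_{A,X}+1$ respectively).
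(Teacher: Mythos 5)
Your proposal is correct and follows the same rank–nullity counting argument the paper uses; the paper writes out only the matrix extreme case in detail while you spell out the block-size bookkeeping for all three, but the method is identical.
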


For brevity, we will often refer to
\ben
\item $\left\lceil \frac{gn}{d} \right\rceil$ as the 
\arvcnt;
\item $\left\lceil \frac{g(n+1)}{2d} \right\rceil$ as the 
\euccnt;
\item 
$\left\lceil \frac{(n+1)(g+1)}{2d} - \frac{1}{nd} \right\rceil$ as the 
\extcnt.
\een
In this terminology, for $X$ to be a certain type of extreme point of $\cD_A$, the kernel dimension $k_{A,X}$ must be at least as large as the corresponding rank-nullity count.

\begin{proof}
The above inequalities result from comparing number of equations to number of unknowns in the homogeneous linear equations given in \Cref{theorem:Equations}. As an example the proof for \Cref{it:MatBeanCount} will be given below.

\Cref{eq:mat} and \Cref{eq:matPerp} are a set of homogeneous linear equations in the unknown $\beta \in\SM{g+1}{n}$. The matrix $\beta$ has $n(n+1)(g+1)/2$ scalar unknowns so \Cref{eq:mat} and \Cref{eq:matPerp} can be written in the form $M \beta' = 0$ where
\begin{align*}
    \beta &= \begin{pmatrix}
        \begin{pmatrix}
            \beta_{011} & \beta_{012} & \cdots & \beta_{01n} \\
            \beta_{012} & \beta_{022} & \cdots & \beta_{02n} \\
            \vdots & \vdots & \ddots & \vdots \\
            \beta_{01n} & \beta_{02n} & \cdots & \beta_{0nn}
        \end{pmatrix}
        ,
        \begin{pmatrix}
            \beta_{111} & \beta_{112} & \cdots & \beta_{11n} \\
            \beta_{112} & \beta_{122} & \cdots & \beta_{12n} \\
            \vdots & \vdots & \ddots & \vdots \\
            \beta_{11n} & \beta_{12n} & \cdots & \beta_{1nn}
        \end{pmatrix}
        ,
        \dots
        ,
        \begin{pmatrix}
            \beta_{g11} & \beta_{g12} & \cdots & \beta_{g1n} \\
            \beta_{g12} & \beta_{g22} & \cdots & \beta_{g2n} \\
            \vdots & \vdots & \ddots & \vdots \\
            \beta_{g1n} & \beta_{g2n} & \cdots & \beta_{gnn}
        \end{pmatrix}
    \end{pmatrix} \\
    \beta' &= (\beta'_0,
        \beta'_1,
        \cdots,
        \beta'_n
    ) \in \R^\frac{n(n+1)(g+1)}{2} \\ 
    \beta'_i &= (\beta_{i11}, \beta_{i12}, \cdots, \beta_{i1n}, \beta_{i22}, \beta_{i23}, \cdots, \beta_{i2n}, \beta_{i33}, \cdots, \beta_{inn})
\end{align*}
and $M\in M_{dnk_{A,X} + 1 \times \frac{n(n+1)(g+1)}{2}}(\R)$ (\Cref{eq:mat} has $dnk_{A,X}$ equations and \Cref{eq:matPerp} is one). 
 By the rank-nullity theorem, if \Cref{eq:matBeanCount} does not hold, then $M$ has a nontrivial nullspace and thus there is a nontrivial $\beta$ solving \Cref{eq:mat} and \Cref{eq:matPerp}. Hence by \Cref{theorem:Equations}, $X$ is not matrix extreme. 
\end{proof}

\sssec{The Dilation Subspace}
The space of all solutions, $\beta$, to \Cref{eq:arv} is often useful to consider. This is a subspace of $M_{n\times 1}(\R)^g$ and shall be referred to as the \df{dilation subspace} of $X$ with respect to $\D_A$. We call the dimension of this space the \df{dilation subspace dimension} of $X$ with respect to $\D_A$, often abbreviated $dilDim$.
The reference to $\D_A$ is often dropped when context makes it clear.

\sec{Matrix extreme points are always free extreme when $d=2$}
\label{sec:d=2}
We now examine free and matrix extreme points in the special case $d=2$. Our main result in this section is the following.

\begin{theorem}
\label{thm:d=2}
Let $A \in SM_2 (\R)^g$ and let $X \in \cD_A$. Then $X$ is a matrix extreme point of $\cD_A$ if and only if $X$ is a free extreme point of $\cD_A$. 
\end{theorem}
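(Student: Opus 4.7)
The plan is to reduce the claim to an Arveson-extremality statement and then derive a contradiction using the $2 \times 2$ block structure of the pencil. The direction free extreme $\Rightarrow$ matrix extreme is immediate from item \Cref{it:free-ext-imp-mat-ext} of the proposition following \Cref{lemma:Kriel2}. The converse, for bounded free spectrahedra, holds precisely when matrix extreme implies Arveson extreme; this is the content of the (unlabeled) corollary at the end of Section~\ref{section:theory}'s subsection on characterizing extreme points, which says that a matrix extreme $X$ is free extreme if and only if it is Arveson extreme. So the theorem reduces to showing: if $A \in SM_2(\R)^g$ and $X \in \cD_A(n)$ is matrix extreme, then $X$ is Arveson extreme.

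I would assume for contradiction that $X$ is matrix extreme but not Arveson extreme. \Cref{theorem:Equations}\Cref{it:ArvEq} then yields a nonzero column-vector tuple $\beta = (\beta_1, \dots, \beta_g) \in M_{n \times 1}(\R)^g$ satisfying $\lA(\beta^T) K_{A,X} = 0$. The target is to manufacture from $\beta$ a nontrivial symmetric pair $(\beta_0, \beta') \in SM_n(\R)^{g+1}$ satisfying both \Cref{eq:mat} and the trace condition \Cref{eq:matPerp}, thereby contradicting matrix extremality via \Cref{theorem:Equations}\Cref{it:MatEq}.

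To exploit the $d = 2$ hypothesis, I would decompose each $A_i$ in the Pauli basis as $A_i = a_i I_2 + b_i \sigma_1 + c_i \sigma_3$, so that
\[
\LA(X) = \begin{pmatrix} P(X) + C(X) & B(X) \\ B(X) & P(X) - C(X) \end{pmatrix}
\]
with $P, B, C$ affine in $X$, and every kernel vector of $\LA(X)$ splits as $(k^{(1)}, k^{(2)}) \in \R^n \times \R^n$. A natural ansatz for the symmetric violator is the rank-two symmetrization $\beta_i' := \beta_i v^T + v \beta_i^T$ for a chosen nonzero $v \in \R^n$. Using the identity
\[
A_i \otimes (\beta_i v^T + v \beta_i^T) = (A_i \otimes \beta_i)(I_2 \otimes v^T) + (I_2 \otimes v)(A_i \otimes \beta_i^T)
\]
together with the Arveson equation on $\beta$, the matrix extreme kernel equation \Cref{eq:mat} collapses to the single linear condition
\[
(I_2 \otimes \beta_0) K_{A,X} = -\Bigl(\textstyle\sum_i A_i \otimes \beta_i\Bigr)(I_2 \otimes v^T) K_{A,X}
\]
on $\beta_0 \in SM_n(\R)$, supplemented by the one-dimensional trace constraint from \Cref{eq:matPerp}.

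The main obstacle is exhibiting $v \in \R^n$ and symmetric $\beta_0$ for which this reduced system is solvable and $(\beta_0, \beta')$ is not a scalar multiple of $(I, X)$. The rank-nullity counts of \Cref{cor:BeanCounts} confirm that dimension counting alone cannot work: already at $n = 3$, $g = 3$, $d = 2$ the matrix extreme count permits $k_{A,X} = 4$ while the Arveson count demands $k_{A,X} \geq 5$. The $d = 2$ hypothesis therefore has to be used structurally, through the coupling of $k^{(1)}$ and $k^{(2)}$ by the triple $(P, B, C)$, in order to select $v$ along a direction in which the obstruction to symmetrizing $\beta$ cancels and to pick the symmetric $\beta_0$ solving the reduced equation, completing the contradiction.
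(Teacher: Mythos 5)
Your reduction to ``matrix extreme implies Arveson extreme'' is sound for bounded $\cD_A$, and the algebraic setup for the rank-two symmetrization ansatz $\beta_i' = \beta_i v^T + v\beta_i^T$ is correct: the identity $A_i\otimes(\beta_iv^T+v\beta_i^T)=(A_i\otimes\beta_i)(I_2\otimes v^T)+(I_2\otimes v)(A_i\otimes\beta_i^T)$ and the fact that $(I_2\otimes v)\lA(\beta^T)K_{A,X}=0$ do collapse \Cref{eq:mat} to the displayed linear condition on $\beta_0$. But the final paragraph is an admission, not an argument. You correctly observe that dimension counting cannot close the gap, and then assert that the $d=2$ block structure ``has to be used structurally'' to produce a suitable $v$ and a symmetric $\beta_0$ solving the reduced system nontrivially --- without proving that such a pair exists, that the resulting $(\beta_0,\beta')$ is nonzero, or that it is not a scalar multiple of $(I,X)$. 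That existence statement is precisely the theorem; the proposal stops exactly where the proof has to begin.

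There is also a scope problem that the ansatz framework cannot repair. \Cref{theorem:Equations} and the corollary you invoke both require $\cD_A$ to be bounded, but the theorem does not assume this, and for $d=2$ boundedness fails generically. In fact, by Proposition~\ref{prop:gverybig}, once $g\geq 3=d(d+1)/2$ the set $\cD_A$ is never bounded and has at most one extreme point, which has to be handled by a separate classification rather than by the kernel equations. The paper's proof therefore splits into cases the proposal never sees: $g\geq 3$ is dispatched by Proposition~\ref{prop:gverybig}; bounded $g=2$ is sent by an invertible projective map (Theorem~\ref{thm:MatExtToMatExt}, Proposition~\ref{prop:g2d2CanForm}) onto the spin disk, whose matrix and free extreme points are already known to coincide; unbounded $g=2$ is reduced to a normal form and killed by a Schur-complement commutation argument showing Euclidean extreme points at level $n>1$ are reducible; and $g=1$ is immediate. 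None of this is a direct linear-algebra computation on $K_{A,X}$, so even if you could push the ansatz through in the bounded case, you would be supplying a genuinely different --- and still incomplete --- route, not a variant of the paper's.
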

\begin{proof}
See Section \ref{ssec:d=2proof}.
\end{proof}

To prove Theorem \ref{thm:d=2}, we separately consider the cases $g=2$ and $g \geq 3$. The $g=2$ case is handled using projective maps to show that any bounded free spectrahedron $\cD_A$ with $A \in SM_2 (\R^2)$ can be mapped to a canonical free spectrahedron, see Proposition \ref{prop:g2d2CanForm}. Furthermore we show in the upcoming Theorem \ref{thm:MatExtToMatExt} that matrix extreme points are preserved under invertible projective transformations. The $g \geq 3$ case is handled by the following proposition which fully classifies all matrix and free extreme points of free spectrahedra that satisfy $g \geq d(d+1)/2$.

\begin{prop}
\label{prop:gverybig}
Let $A \in SM_d(\R)^g$. If $g\geq d(d+1)/2$, then $\cD_A$ is not bounded and is not the matrix convex hull of its free extreme points. Furthermore, such free spectrahedra either have exactly one matrix extreme point which is also free extreme in which case $g=d(d+1)/2$ or they have no extreme points at all. 

\end{prop}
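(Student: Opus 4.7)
The plan is to split into cases by whether $A=(A_1,\ldots,A_g)$ is linearly independent in $SM_d(\R)$. Since $\dim SM_d(\R)=d(d+1)/2$, the hypothesis $g\geq d(d+1)/2$ forces either linear dependence (automatic for $g>d(d+1)/2$, possible for $g=d(d+1)/2$) or, only when $g=d(d+1)/2$, that $A$ is a basis of $SM_d(\R)$. I expect unboundedness, the classification of extreme points, and the failure of the matrix convex hull statement all to drop out of this dichotomy.

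In the linearly dependent case, pick $\lambda\in\R^g\setminus\{0\}$ with $\sum\lambda_iA_i=0$. A direct computation gives $\LA(X+t(\lambda_1 I_n,\ldots,\lambda_g I_n))=\LA(X)$ for every $X\in\cD_A(n)$ and every $t\in\R$, so each $\cD_A(n)$ contains a line through every one of its points. This yields unboundedness and rules out every Euclidean extreme point at every level; hence via the inclusion free $\subseteq$ matrix $\subseteq$ Euclidean, no matrix or free extreme points exist. In particular $\cD_A$ is nonempty (it contains $0$) while the matrix convex hull of its (empty) free extreme points is not, establishing the final clause in this case.

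In the basis case, the map $\phi\colon\R^g\to SM_d(\R)$, $x\mapsto\sum x_iA_i$, is a linear isomorphism, and so at each level $n$ the lifted map $\lA\colon SM_n(\R)^g\to V_n:=SM_d(\R)\kron SM_n(\R)\subseteq SM_{dn}(\R)$ is also an isomorphism (the crucial dimension count is $\dim V_n=g\cdot n(n+1)/2=\dim SM_n(\R)^g$). Since $I_{dn}=I_d\kron I_n\in V_n$, the substitution $M=\LA(X)=I_{dn}+\lA(X)$ affinely identifies $\cD_A(n)$ with the pointed cone $V_n\cap(SM_{dn}(\R))_+$. Such a cone is unbounded and has a unique classical extreme point, namely its vertex; translating back, the unique Euclidean extreme point of $\cD_A(n)$ is the tuple $x^*\kron I_n$, where $x^*:=\phi^{-1}(-I_d)$ is the unique solution of $\LA(X)=0$. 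For $n\geq 2$ this tuple is reducible and hence not matrix extreme, leaving $x^*$ at level $1$ as the only candidate. To verify $x^*$ is free (hence matrix) extreme, I would apply the canonical shuffle of $\LA$ to a $1$-dilation of $x^*$: because $\LA(x^*)=0$, positive semidefiniteness of the shuffled block matrix forces the off-diagonal block $\sum A_i\beta_i$ to vanish, and linear independence of $A$ then forces $\beta=0$, giving Arveson extremeness. Combined with the trivial irreducibility at level $1$, \Cref{it:free-ext-is-irr-arv} yields that $x^*$ is free extreme. Finally, the matrix convex hull of $\{x^*\}$ at level $n$ collapses to the singleton $\{x^*\kron I_n\}$, which is strictly smaller than the full cone $\cD_A(n)$.

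The main obstacle is pinning down the affine identification $\cD_A(n)\cong V_n\cap(SM_{dn}(\R))_+$ in the basis case, which requires the dimension match together with a careful bookkeeping of the Kronecker structure (symmetric-tensor-symmetric sits inside the larger symmetric space). Once this identification is in hand, the remaining assertions reduce to two well-known facts: a pointed convex cone has its vertex as its unique classical extreme point, and free extreme is equivalent to irreducible plus Arveson extreme.
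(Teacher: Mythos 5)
Your proof is correct, and the overall case split (linearly dependent versus basis) matches the paper's. The difference lies entirely in how you handle the basis case $g=d(d+1)/2$. The paper's proof is a short ray argument: picking $\alpha$ with $\sum\alpha_iA_i=-I_d$, one observes the scaling identity $L_A\bigl((1-c)(\alpha_1I,\dots,\alpha_gI)+cX\bigr)=cL_A(X)$, which immediately shows every $X\in\cD_A$ sits on a ray through $(\alpha_1I,\dots,\alpha_gI)$ lying in $\cD_A$, ruling out all Euclidean extreme points except the vertex; irreducibility then pins matrix extremeness to level $1$, and free extremeness of the scalar point is delegated to a cited result ({\cite[Proposition 6.1]{EHKM18}}). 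You instead establish the affine isomorphism $L_A\colon\cD_A(n)\to V_n\cap(SM_{dn}(\R))_+$ onto a pointed cone and read off the unique Euclidean extreme point as the vertex; you then verify Arveson extremeness of $x^*$ by hand via the canonical shuffle and linear independence, avoiding the citation. Your route does more bookkeeping (the dimension count for $\lA$ to be an isomorphism, and noting $I_{dn}\in V_n$ so that the affine image is the linear subspace $V_n$ itself), but it makes the geometric picture explicit and is self-contained for the free-extreme step; the paper's identity $L_A((1-c)\alpha+cX)=cL_A(X)$ is really just the analytic shadow of your cone identification, so the two arguments are close in spirit but differ in emphasis and in the free-extreme step.
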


\begin{proof}
To begin the proof, note that $\dim (SM_d(\R)) = d(d+1)/2$. Now if $g > d(d+1)/2$. then there exist constants $\alpha_1,\dots,\alpha_g \in \R$ such that $\sum_{i=1}^g \alpha_i A_i \otimes I=0$. As an immediate consequence, for any $X \in \cD_A$ and any $c \in \R$ we have $X+c(\alpha_1 I,\dots, \alpha_g I) \in \cD_A$. We conclude that $\cD_A$ has no extreme points at all if $g > d(d+1)/2$.

Now suppose $g = d(d+1)/2$. If $\{A_1,\dots,A_g\}$ is a linearly dependent set, then following the above argument shows $\cD_A$ has no extreme points. On the other hand, if $\{A_1,\dots,A_g\}$ is a linearly independent set, then there exist constants $\alpha_1,\dots,\alpha_g$ such that $\sum_{i=1}^g \alpha_i A_i = -I_2.$ We will show that $(\alpha_1,\dots,\alpha_g) \in \cD_A(1)$ is the only matrix extreme point of $\cD_A$. To this end let $X \in \cD_A(n)$ and observe that for any real number $c \geq 0$ we have
\[
L_A ((1-c)(\alpha_1 I,\dots,\alpha_g I)+cX) = I+ (1-c) I \otimes -I + c \Lambda(X) = c L_A (X) \succeq 0.
\]
It follows that $(1-c)(\alpha_1 I,\dots,\alpha_g I)+cX \in \cD_A$ for all $c \geq 0$ hence $X$  is not a Euclidean extreme point of $\cD_A$ unless $X = (\alpha_1 I,\dots,\alpha_g I)$. Moreover, since matrix extreme points are irreducible tuples, $X$ is not a matrix extreme point of $\cD_A$ unless $X = (\alpha_1,\dots,\alpha_g) \in \cD_A (1)$. It then follows from \cite[Proposition 6.1]{EHKM18} that this tuple is both a matrix and free extreme point of $\cD_A$. Finally, since $\cD_A$ is unbounded but only has one free extreme point, it is clear that $\cD_A$ cannot be the matrix convex hull of its free extreme points. 
\end{proof}

\ssec{Projective maps of free spectrahedra}

The proof of the $g=2$ case of Theorem \ref{thm:d=2} makes heavy use of projective mappings of free spectrahedra. We introduce here the basic definitions and notation related to projective maps. The definitions we use are the same as those found in \cite{E21}, and we direct the reader there for a detailed discussion of projective maps. 

To define projective maps of free spectrahedra, we first define linear mappings between homogeneous free spectrahedra. Given a matrix $W \in M_{g+1} (\R)$, we define a \df{linear transformation} $\cT_W$ on $SM(\R)^{g+1}$ by
\[
\cT_W (X_0,X)=\Big(\sum_{j=1}^{g+1} W_{1,j} X_{j-1} , \dots,\sum_{j=1}^{g+1} W_{g+1,j} X_{j-1} \Big) \qquad \mathrm{for \ all\ } (X_0,X) \in SM(\R)^{g+1}.
\]
Here $W_{i,j}$ denotes the $(i,j)$th entry of $W$. Note that $\cT_W$ is in fact a \df{free linear transformation} meaning that it respects direct sums and simultaneous unitary conjugation. Given a $g+1$ tuple $(A_0,A) \in SM_d(\R)^{g+1}$, the image of $\cH_{(A_0,A)}$ under $\cT_W$ is the set
\[
\cT_W (\cH_{(A_0,A)}) = \{ \cT_W (X_0,X) | \ (X_0,X) \in \cH_{(A_0,A)} \}.
\]
As one might expect, a linear transformation of a homogeneous free spectrahedron is again a homogeneous free spectrahedron, see \cite[Lemma 3.1]{E21}. If $\cH_{(A_0,A)}$ and $\cT_W (\cH_{(A_0,A)})$ are both positive free spectrahedra, then we say that $\cT_W$ is a \df{positive linear transformation} of $\cH_{(A_0,A)}$.

For a tuple $A \in SM_d(\R)^g$ and a matrix $W \in M_{g+1}(\R)$ such that $\cT_W$ is a positive linear transformation of $\fH(\cD_A)$, we define the projective transformation $\cP_W$ of $\cD_A$ by
\[
\cP_W(\cD_A) = \fH^{-1} (\cT_W (\fH (\cD_A))).
\]
In the case that $\cP_W (\cD_A)$ is bounded, we define $\cP_W$ on tuples $X \in \cD_A$ by
\[
\cP_W (X):= \fH^{-1} (\cT_W (\fH (X))
\]
Note that this is well-defined as a consequence of \cite[Lemma 2.2]{E21} since we have required that $\cT_W (\fH(\cD_A))$ is a sectionally bounded positive homogeneous free spectrahedron. In particular, \cite[Lemma 2.2]{E21} guarantees that the inhomogeneous component of $\cT_W (\fH(X))$ is positive semidefinite, hence $\fH^{-1}$ is well-defined on this tuple.  

This completes the definitions we require to proceed with the proof of Theorem \ref{thm:d=2}. Before doing so, we warn the reader that there are several idiosyncrasies that can occur when working with projective maps in the noncommutative setting. For example, it is necessary to insist that one uses a minimal defining tuple when defining the homogenization of a free spectrahedron, as otherwise the homogenization can fail to be well-defined. In addition, given a free spectrahedron $\cD_A$ and a projective map $\cP_W$ defined on $\cD_A$ where $W$ is invertible, while one always has the equality
\[
\cP_W (\cD_A) = \overline{\{ \cP_W (W) | X \in \cD_A \}},
\]
if $\cD_A$ is unbounded, then it can be the the case that 
\[
\cP_W (\cD_A) \neq \{ \cP_W (W) | X \in \cD_A \}.
\]
In fact, this issue necessarily occurs if $\cD_A$ is unbounded and $\cP_W (\cD_A)$ bounded. This for example has the consequence that $\cP_W$ can fail to be invertible even if $W$ is invertible. See \cite[Remark 2.3, Example 3.5]{E21} for further discussion. We lastly mention that while these idiosyncrasies can in principle occur, since we will always consider projective maps between bounded free spectrahedra, we will not encounter these issues, see \cite[Lemma 3.4]{E21}.

We now show that matrix extreme points are preserved under invertible projective transformations.

\begin{theorem}
\label{thm:MatExtToMatExt}
Let $\cD_A$ and $\cD_B$ be bounded free spectrahedra and suppose there exists some invertible matrix $W$ such that $\cP_W$ is an invertible projective map from $\cD_A$ to $\cD_B$. Then $X \in \cD_A$ is a matrix extreme point of $\cD_A$ if an only if $\cP_W (X)$ is a matrix extreme point of $\cD_B$.
\end{theorem}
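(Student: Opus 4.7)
The plan is to lift the question to the homogenization and invoke Kriel's Theorem~\ref{theorem:Kriel}, which characterizes matrix extreme points of $\cD_A$ as those $X$ for which $(I,X)$ lies on a classical extreme ray of the cone $\cH_{(I,A)}=\fH(\cD_A)$; the analogous statement holds for $\cD_B$. So the goal reduces to showing that ``being on a classical extreme ray'' is preserved as we pass from $(I,X)\in\cH_{(I,A)}$ to $(I,\cP_W(X))\in\cH_{(I,B)}$.

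First, since $W$ is invertible, $\cT_W$ is a linear bijection from $\cH_{(I,A)}$ onto $\cT_W(\cH_{(I,A)})=\fH(\cD_B)=\cH_{(I,B)}$, and any such bijection between convex cones preserves classical extreme rays. Hence $(I,X)$ lies on an extreme ray of $\cH_{(I,A)}$ if and only if $\cT_W(I,X)$ lies on an extreme ray of $\cH_{(I,B)}$.

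Second, I would reconcile the tuple $\cT_W(I,X)$ with the normalized tuple $(I,\cP_W(X))$. Write $\cT_W(I,X)=(X_0',X')$ with $X_0'\psd 0$; the definition of $\cP_W$ then gives $\cP_W(X)=(X_0')^{\dagger/2}X'(X_0')^{\dagger/2}$. Assuming $X_0'$ is invertible and setting $S=(X_0')^{1/2}$, the congruence map $\Psi_S(Z_0,Z)=(S^T Z_0 S, S^T Z S)$ is a linear automorphism of the cone $\cH_{(I,B)}(n)$, which is immediate from the identity
\[
(I\kron S^T)\,\Lambda_{(I,B)}(Z_0,Z)\,(I\kron S)=\Lambda_{(I,B)}(S^T Z_0 S,\, S^T Z S).
\]
A direct computation gives $\Psi_S(I,\cP_W(X))=\cT_W(I,X)$, and since $\Psi_S$ preserves extreme rays the ray through $\cT_W(I,X)$ is extreme if and only if the ray through $(I,\cP_W(X))$ is. Chaining this with the first step and reapplying Kriel's theorem on the $\cD_B$ side yields the desired equivalence; the reverse implication is automatic because $\cP_{W^{-1}}$ satisfies the same hypotheses.

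The main obstacle will be the singular case in which $X_0'$ fails to be positive definite, since then $(X_0')^{1/2}$ and hence $\Psi_{(X_0')^{1/2}}$ are not literally defined. I expect that boundedness of both spectrahedra together with invertibility of $\cP_W$ forces $X_0'\succ 0$ whenever $X$ is matrix extreme, and that this can be extracted from \cite[Lemma~2.2, Lemma~3.4]{E21} applied to the inhomogeneous component of $\cT_W(\fH(X))$. As a fallback, one can compress everything to the range of $X_0'$, run the congruence argument there using the pseudoinverse square root $(X_0')^{\dagger/2}$, and invoke the irreducibility of matrix extreme points to rule out any nontrivial contribution from $\ker(X_0')$.
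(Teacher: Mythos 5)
Your proof is correct and follows the same high-level route as the paper: homogenize, invoke Kriel's characterization (Theorem~\ref{theorem:Kriel}), and transport extreme rays of $\cH_{(I,A)}$ to $\cH_{(I,B)}$ through $\cT_W$ and the congruence that renormalizes the inhomogeneous slot. The paper instead phrases Kriel's theorem via the kernel-containment criterion of Corollary~\ref{lemma:Kriel2}\rm(iii) and uses the identity $\Lambda_{\cT_{W^{-T}}(I,A)}(\cT_W(Y_0,Y))=\Lambda_{(I,A)}(Y_0,Y)$ to push the containment forward, then dismisses the final renormalization as ``straightforward''; your explicit congruence $\Psi_S$ supplies exactly what that step elides, so you are being more careful rather than different.

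Your concern about the singular case does resolve as you suspect. Write $\cT_W(I,X)=(X_0',X')$. By \cite[Lemma~2.2]{E21} one has $\ker X_0'\subseteq\ker X_i'$ for every $i$, and since the $X_i'$ are symmetric this makes $\ker X_0'$ a common \emph{reducing} subspace for the tuple $(X_0',X')$. Because $\cT_W$ is invertible and only recombines the coordinate matrices linearly, $(X_0',X')$ and $(I,X)$ have the same reducing subspaces, which in turn coincide with those of $X$ (the identity slot contributes nothing). Thus $\ker X_0'\neq 0$ forces $X$ reducible. When $X$ is matrix extreme it is irreducible (as recorded in the Remark following the definition), so $X_0'\succ 0$ and $S=(X_0')^{1/2}$ exists; the same argument applied to $\cP_{W^{-1}}$ and $\cP_W(X)$ handles the other direction. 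When $X$ is reducible one does not even need $\Psi_S$: reducibility of $X$ gives reducibility of $(X_0',X')$ and, compressing by $(X_0')^{\dagger/2}$, of $\cP_W(X)$, so both sides of the claimed equivalence fail and the biconditional holds vacuously. So your ``fallback'' is not really needed as a separate mechanism; irreducibility alone dispatches the obstruction.
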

\begin{proof}
Using Theorem \ref{theorem:Kriel} we have that $X$ is a matrix extreme point of $\cD_A$ if and only if the only solutions to 
\[
\ker \Lambda_{(I,A)} (I,X) \subset \ker \Lambda_{(I,A)} (Y_0,Y)
\]
satisfy $(Y_0,Y) = \alpha (I,X)$ for some $\alpha \in R$. 

Arguing by contrapositive, suppose $X$ is not a matrix extreme point of $\cD_A$. Then there is some tuple $(Y_0,Y)$ which satisfies
\[
\ker \Lambda_{(I,A)} (I,X) \subset \ker \Lambda_{(I,A)} (Y_0,Y)
\]
and that there is no $\alpha \in R$ such that $(Y_0,Y) = \alpha (I,X)$. From this we find
\[
\ker \Lambda_{\cT_{W^{-T}} (I,A)} \left(\cT_W (I,X)\right) \subset \ker \Lambda_{\cT_{W^{-T}} (I,A)} \left(\cT_W (Y_0,Y)\right).
\]
Furthermore, we cannot have $\alpha \cT_W (I,X) = \cT_W (Y_0,Y)$ since $\cT_W$ is an invertible linear transformation. As a consequence of \cite[Lemma 3.3]{E21} we have $\cT_{W^{-T}} (I,A)= (I,B)$ from which it follows that $\cT_W (I,X)$ is not on an extreme ray of $\cH_{(I,B)}$. Moreover, since $\cD_B$ is bounded by assumption, \cite[Lemma 2.2]{E21} shows that the inhomogeneous component of $\cT_W (I,X)$ is positive semidefinite. It is then straightforward to show that $(I,X)$ is not on an extreme ray of $\cH_{(I,B)}.$ We conclude that $\cP_W (X)$ is not a matrix extreme point of $\cD_B$, as claimed. 
\end{proof}

Knowing that matrix extreme points are preserved under invertible projective transformations, to treat the bounded $d=g=2$ case in Theorem \ref{thm:d=2}, it is sufficient to consider some canonical free spectrahedron which any other bounded $g=d=2$ free spectrahedron can be projectively mapped onto. The canonical free spectrahedron we consider is the spin disk since its matrix and free extreme points are well understood, see \cite[Proposition 7.5]{EHKM18}.

\begin{prop}
\label{prop:g2d2CanForm}
Let $A \in SM_2 (\R)^2$ and assume that the free spectrahedron $\cD_A$ is bounded and let $\cD_B$ be the spin disk. 
That is, $\cD_B$ is the free spectrahedron with defining tuple 
\[
 B =  \left(\begin{pmatrix}
1 & 0 \\
0 & -1 
\end{pmatrix},
 \begin{pmatrix}
0 & 1 \\
1 & 0
\end{pmatrix}\right).
\]
Then there exists an invertible matrix $W$ such that the map $X \mapsto \cP_W (X)$ defined on $\cD_A$ is a well-defined invertible projective transformation which maps $\cD_A$ onto $\cD_B$. 
\end{prop}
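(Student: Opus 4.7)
The plan is to construct the matrix $W$ explicitly by exploiting the fact that the triple $\{I_2,A_1,A_2\}$ spans the three-dimensional space $SM_2(\R)$. First I would verify that, under the hypotheses, this triple is actually a basis of $SM_2(\R)$. The argument proceeds by contrapositive: if $\{A_1,A_2\}$ were linearly dependent, then $L_A(X)$ would depend on a single linear combination of $X_1,X_2$, so $\cD_A(1)$ would contain an entire line; and if $I\in\operatorname{span}\{A_1,A_2\}$, say $I=\alpha_1 A_1+\alpha_2 A_2$, then $L_A(-t\alpha_1,-t\alpha_2)=(1-t)I\succeq 0$ for every $t\le 1$, giving an unbounded ray in $\cD_A(1)$. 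Either alternative contradicts boundedness of $\cD_A$. Combined with $\dim SM_2(\R)=3$, this forces $\{I,A_1,A_2\}$ to be a basis.

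With the basis property in hand, I would expand each spin-disk matrix in the basis $\{I,A_1,A_2\}$ as $B_i=\alpha_{i,0}I+\alpha_{i,1}A_1+\alpha_{i,2}A_2$ for $i=1,2$, and set
\[
V:=\begin{pmatrix}1 & 0 & 0 \\ \alpha_{1,0} & \alpha_{1,1} & \alpha_{1,2} \\ \alpha_{2,0} & \alpha_{2,1} & \alpha_{2,2}\end{pmatrix},\qquad W:=V^{-T}.
\]
Since $\{I,B_1,B_2\}$ is also a basis of $SM_2(\R)$, the matrix $V$ is the corresponding change-of-basis matrix and hence invertible, so $W$ is well-defined and invertible; by construction $\cT_{W^{-T}}(I,A_1,A_2)=(I,B_1,B_2)$.

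To conclude, I would invoke \cite[Lemma 3.3]{E21} to obtain $\cT_W(\fH(\cD_A))=\cH_{(I,B)}=\fH(\cD_B)$; both sides contain $(1,0,0)$ in the interior at level $1$, so $\cT_W$ is a positive linear transformation of $\fH(\cD_A)$, and $\cP_W$ is therefore well-defined with $\cP_W(\cD_A)=\fH^{-1}(\fH(\cD_B))=\cD_B$. Invertibility of $\cP_W$ follows from \cite[Lemma 3.4]{E21}, using boundedness of both $\cD_A$ and $\cD_B$ together with invertibility of $W$. I expect the most delicate piece of the argument to be the boundedness-to-basis implication of the first step; the remainder is essentially bookkeeping within the projective-map framework of \cite{E21} once one is careful about the positivity condition on $\cT_W$ and the boundedness hypothesis needed to rule out the degeneracies mentioned after the definition of $\cP_W$.
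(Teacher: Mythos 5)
Your proof is correct and reaches the same $W$ as the paper (the paper's explicit $3\times3$ matrix is precisely the change-of-basis matrix you describe, since both are characterized by $\cT_{W^{-T}}(I,A_1,A_2)=(I,B_1,B_2)$), but you arrive at it by a genuinely different route in the one nontrivial step: establishing invertibility. The paper writes down $W$ in closed form in terms of the entries $a_{ijk}$ of $A_1,A_2$, computes $\det W$, and then shows by a case analysis (first $a_{121}=a_{122}=0$, then $\det W=0$) that vanishing of the determinant forces a semidefinite matrix into $\vspan\{A_1,A_2\}$, contradicting boundedness. You instead argue abstractly: boundedness rules out $\{A_1,A_2\}$ being dependent and rules out $I\in\vspan\{A_1,A_2\}$, so $\{I,A_1,A_2\}$ is a linearly independent triple in the $3$-dimensional space $SM_2(\R)$ and hence a basis, making the change-of-basis matrix $V$ (and thus $W=V^{-T}$) automatically invertible. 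Your version is cleaner and more conceptual, and it makes transparent \emph{why} boundedness is the right hypothesis; the paper's version has the advantage of producing an explicit formula for $W$ that the reader can check by hand. Both proofs then finish identically through the machinery of \cite{E21}. One small bookkeeping note: the step deducing $\cT_W(\cH_{(I,A)})=\cH_{(I,B)}$ from $\cT_{W^{-T}}(I,A)=(I,B)$ is \cite[Lemma 3.1]{E21} in the paper's numbering, not Lemma 3.3 (which, together with Lemma 3.4, is what the paper invokes for surjectivity and invertibility of $\cP_W$); this does not affect the substance of the argument.
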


\begin{proof}

Write $A=(A_1,A_2)$ where
\[
A_1 = \begin{pmatrix}
a_{111} & a_{121} \\
a_{121} & a_{221}
\end{pmatrix} \qquad \qquad
A_2 = \begin{pmatrix}
a_{112} & a_{122} \\
a_{122} & a_{222}
\end{pmatrix}.
\]
Define the matrix $W \in \R^{3 \times 3}$ by 
\[
W = \begin{pmatrix}
1 & \frac{a_{111}+a_{221}}{2} & \frac{a_{112}+a_{222}}{2} \\
0 & \frac{a_{111}-a_{221}}{2} & \frac{a_{112}-a_{222}}{2} \\
0 & a_{121} & a_{122} \\
\end{pmatrix}.
\]
We will show that $\cP_{W}$ is a well-defined invertible projective map from $\cD_A$ onto $\cD_B$ and that $\cP_{W}$ has inverse $\cP_{W^{-1}}$. To accomplish this we must show that $W$ is invertible and that $\cT_W$ is a positive linear transformation that maps $\cH_{(I,A)}$ onto $\cH_{(I,B)}$. 

We first show that $W$ is invertible. To this end note that if $a_{122}$ and $a_{121}$ are both equal to zero, then $\cD_A$ is unbounded which is a contradiction.  Next note that that the determinant of $W$ is given by
\[
\det (W) = (a_{111} a_{122} - a_{121} a_{112} +a_{121} a_{222} - a_{221} a_{122})/2
\]
Thus, if $W$ is not invertible we have
\[
a_{111} a_{122} - a_{121} a_{112} = a_{221} a_{122} -a_{121} a_{222},
\]
from which it follows that the matrix
\[
a_{122} A_1 - a_{121} A_2 = \begin{pmatrix}
a_{111} a_{122} - a_{121} a_{112} & 0 \\
0 & a_{221} a_{122} -a_{121} a_{222}
\end{pmatrix}
\]
is either positive or negative semidefinite. In either case, $\cD_A$ is not bounded since the nonzero vector $\alpha (a_{122},-a_{121})$ is then an element of $\cD_A$ either for all $\alpha \geq 0$ or $\alpha \leq0$, depending on whether the quantity $a_{111} a_{122} - a_{121} a_{112}$ is positive or negative. We conclude that $W$ is invertible. 

Next observe that a direct calculation shows that $\cT_{W^T} (I,B_1,B_2) = (I,A_1,A_2)$ hence $\cT_{W^{-T}} (I,A_1,A_2) = (I,B_1,B_2).$ Using \cite[Lemma 3.1]{E21} then shows that 
\[
\cT_W (\cH_{(I,A)}) = \cH_{(I,B)}.
\] 
Since $\cH_{(I,A)}$ and $\cH_{(I,B)}$ are both positive homogeneous free spectrahedra, we obtain that $\cT_W$ is a positive linear transformation from $\cH_{(I,A)}$ to $\cH_{(I,B)}$. Furthermore, $\cH_{(I,B)}$ is sectionally bounded since $\cD_B$ is bounded. It follows that the map $X \mapsto \cP_W (X)$ defined on $\cD_A$ is indeed a well-defined projective transformation that maps $\cD_A$ into $\cD_B$. The proof is completed by \cite[Lemma 3.3]{E21} and \cite[Lemma 3.4]{E21} which together show that $\cP_W$ maps $\cD_A$ onto $\cD_B$ and that this map is invertible with inverse equal to $\cP_{W^{-1}}$.
\end{proof}

\ssec{Proof of Theorem \ref{thm:d=2}}
\label{ssec:d=2proof}


The result in the case that $g \geq 3$ if proved as Proposition \ref{prop:gverybig}, so it is sufficient to consider $g \leq 2$.  We first assume that $g=2$ and that $\cD_A$ is unbounded.  Equivalently, assume that $\mathrm{span} (\{A_1,A_2\})$ contains a positive semidefinite matrix. We first argue that it is sufficient to consider tuples $(A_1,A_2)$ of the form
\[
A_1 = \begin{pmatrix} 1 & 0 \\ 0 & 0 \end{pmatrix} \qquad \qquad A_2 = \begin{pmatrix}
0 & 1 \\ 1 & b
\end{pmatrix}
\]
where $b \in \R$. To this end, note that if $c I \in \mathrm{span} (\{A_1,A_2\})$ for some $0 \neq c \in \R$ or if $\{A_1,A_2\}$ is linearly dependent, then we can repeat the argument used in Proposition \ref{prop:gverybig} to conclude that all matrix extreme points of $\cD_A$ are also free extreme. 
Therefore we assume that $c I \notin \mathrm{span} (\{A_1,A_2\})$ and that $\{A_1,A_2\}$ is linearly independent. In this case, a routine argument shows that $\mathrm{span} (\{A_1,A_2\})$ contains a rank one positive semidefinite matrix. 
Furthermore, it is straightforward to show that if there exists an invertible linear transformation on $SM(\R)^2$ which maps $A$ to $B$, then $\cD_A$ contains a matrix extreme point which is not free extreme if and only if $\cD_B$ contains a matrix extreme point that is not free extreme. Using this fact, we can without loss of generality assume that $A_1$ has rank $1$ and that the spectrum of $A_1$ is $\{1,0\}.$ From here we can use the fact that unitarily equivalent tuples define the same free spectrahedron together with another invertible change of a variables to reduce to the case 
\[
A_1 = \begin{pmatrix} 1 & 0 \\ 0 & 0 \end{pmatrix} \qquad \qquad A_2 = \begin{pmatrix}
0 & 1 \\ 1 & b
\end{pmatrix}
\]
as claimed. 

Now, with $(A_1,A_2)$ as above, we can use the Schur complement to conclude that $X \in \cD_A$ if and only if
\[
I+b X_2 \succeq 0 \qquad \qquad \mathrm{and} \qquad \qquad I+X_1 - X_2 (1+bX_2)^\dagger X_2 = P.
\]
Observe that if $X \in \cD_A$ then 
\[
I+(X_1+P)-X_2 (1+b X_2)^\dagger X_2 = 2P \succeq 0 \qquad \mathrm{and} \qquad I+(X_1-P)-X_2 (1+b X_2)^\dagger X_2 = 0
\]
from which we would obtain that $(X_1+P,X_2) \in \cD_A$ and $(X_1-P,X_2) \in \cD_A$. Moreover, if $P \neq 0$, then 
\[
(X_1+P,X_2) \neq X \neq (X_1-P,X_2).
\] 
That is, if $X$ is a Euclidean extreme point of $\cD_A$, then $I+X_1 - X_2 (1+bX_2)^\dagger X_2 = 0$. 

However in this case we have 
\[
X_1 X_2 = (X_2 (1+bX_2)^\dagger X_2-I) X_2 = X_2 (X_2 (1+bX_2)^\dagger X_2-I) = X_2 X_1.
\]
We conclude that if $X \in \cD_A(n)$ is a Euclidean extreme point of $\cD_A$ and $n \geq 1$, then $X$ is reducible. It follows from \cite[Theorem 1.1]{EHKM18} that all matrix and free extreme points of $\cD_A$ are contained in $\cD_A(1)$. From here, one can use \cite[Proposition 6.1]{EHKM18} to show that all matrix extreme points of $\cD_A$ are also free extreme points. 

Now suppose that $g=2$ and that $\cD_A$ is bounded. Using Proposition \ref{prop:g2d2CanForm} shows that there exists some invertible projective transformation $\cP_W$ which maps $\cD_A$ onto $\cD_B$ where
\[
B = \left( \begin{pmatrix}
1 & 0 \\
0 & -1
\end{pmatrix},
\begin{pmatrix}
0 & 1 \\
1 & 0
\end{pmatrix} \right).
\]
Furthermore using Theorem \ref{thm:MatExtToMatExt} shows that a tuple $X$ is a matrix extreme point of $\cD_A$ if and only if $\cP_W (X)$ is a matrix extreme point of $\cD_B$. Similarly, \cite[Theorem 3.7]{E21} show that $X$ is a free extreme point of $\cD_A$ if and only if $\cP_W (X)$ is a free extreme point of $\cD_B$. Combining this with \cite[Proposition 7.5]{EHKM18} which shows that every matrix extreme point of $\cD_B$ is also free extreme completes the proof in this case.

Finally, for completeness, we mention that the proof when $g=1$ is straightforward. In this case it is easy to show that all matrix extreme points of $\cD_A$ are found at level $1$ of $\cD_A$, hence all matrix extreme points are free extreme.
\hfill\qedsymbol

\sec{Exact \mnae Points} \label{section:exact-points}

In this section we present examples of 
\mnae points for free spectrahedra when $g=3$ and $g=4$. For $g=2$ the 
existence of \mnae{} points is not known.
To prove an example has our claimed properties, numerical (floating point) calculations do not suffice due to possible  numerical errors. However, usually it is difficult to find extreme points of free spectrahedra with exact arithmetic. Even in the smallest nontrivial case where the defining tuple $A\in \SM{2}{2}$ and the desired extreme point $X\in \D_A(2),$ exactly computing $X$ by optimizing a linear functional requires computing an exact arithmetic solution to a semidefinite program with six variables and the constraint $\LA(X) \psd 0$ where $\LA(X)$ is $4\times 4$.

If the size of the defining tuple or the extreme point is greater than two, then computing even just a boundary point would require finding an exact solution $\alpha$ for $\det(\LA(\alpha X)) = 0$, which is often not possible in radicals since $\det(\LA(\alpha X))$ is generally a polynomial of degree greater than five in $\alpha$. As a consequence, it can in some cases be impossible to express boundary points, let alone extreme points, using radicals.
Algorithms which sometimes yield exact extreme points  (typically expressed using  roots of some polynomial)  are  the subject of \Cref{section:igors-method}.

\ssec{$g=3$ \mnae example}
\label{sec:exactg3ex}

Now we give an example for $g=3$ 
of a bounded free spectrahedron $\cD_A$
and a \mnae point $Y$ in it.
In this case, $A$ and  $Y$ have entries which are algebraic numbers 
and we have proved using exact arithmetic that $Y$
is in $\cD_A$ and is not Arveson extreme.
To  prove that $Y$ is not matrix extreme one only needs to check that the 
Matrix Extreme \Cref{eq:mat} has no solution. This we proved via floating point arithmetic
by checking that the appropriate matrix has 
smallest singular value equal to $0.0318244$ while largest singular value is not very large ($<5$),
hence it has no nullspace.
Alternately, one could prove this using exact arithmetic, 
which would be considerably slower in this case.

Now we state our example. Let
\begin{align*}
    A = \left(
    \left(
\begin{array}{cccc}
 0 & 0 & -1 & 1 \\
 0 & 0 & 1 & 0 \\
 -1 & 1 & 0 & 1 \\
 1 & 0 & 1 & 1 \\
\end{array}
\right),\left(
\begin{array}{cccc}
 -1 & -1 & 1 & 1 \\
 -1 & 0 & 0 & 1 \\
 1 & 0 & -1 & -1 \\
 1 & 1 & -1 & 0 \\
\end{array}
\right),\left(
\begin{array}{cccc}
 -1 & 0 & 0 & 0 \\
 0 & 0 & 0 & 0 \\
 0 & 0 & 1 & 1 \\
 0 & 0 & 1 & 1 \\
\end{array}
\right)
    \right)
\end{align*}
be the defining tuple of the free spectrahedron $\D_A$ and
\begin{align*}
    Y = \left(
    \left(
\begin{array}{ccc}
 \frac{1}{4} & \frac{27}{100} & \alpha \\
 \frac{27}{100} & -\frac{13}{100} & \alpha \\
 \alpha & \alpha & 0 \\
\end{array}
\right),
\left(
\begin{array}{ccc}
 -\frac{27}{100} & \frac{21}{100} & 3 \alpha \\
 \frac{21}{100} & \frac{7}{100} & \alpha \\
 3 \alpha & \alpha & 0 \\
\end{array}
\right),
\left(
\begin{array}{ccc}
 \frac{7}{50} & -\frac{49}{100} & 3 \alpha \\
 -\frac{49}{100} & \frac{3}{10} & 0 \\
 3 \alpha & 0 & 0 \\
\end{array}
\right)
\right)
\end{align*}
where $\alpha$ is the smallest positive root of the polynomial 
\begin{align*}
p(t) & = 20828330523 - 3649588559100t^2 + 132250437590000t^4 \\
& \phantom{{}={}} - 
 651404153000000t^6 + 748026200000000t^8.
\end{align*}
Then $Y$ is a \mnae  point of $\D_A$.
We know that $p$ has a positive real zero as $p(0) = 20828330523$ and $p(\frac{1}{8}) = -\frac{208047637414661}{32768}.$ So by the intermediate value theorem, there must be a zero between $0$ and $\frac{1}{8}$. We go into detail on how this point was computed and proved to be a \mnae point of $\D_A$ in \Cref{section:igors-method}.

\ssec{Exact arithmetic \mnae  point for $g=4$}

For $g=4$, we also have an exact arithmetic example of a \mnae  point. 
Let $A = (A_1,A_2,A_3,A_4)$ for
\begin{align*}
    &A_1 = \mathrm{diag}\left(2,0,-4,0,0,0,-4,0,\frac{8}{3}\right), & A_2 = \mathrm{diag}\left(0,4,-4,0,0,0,0,-\frac{8}{3},\frac{8}{3}\right) \\
    &A_3 = \mathrm{diag}\left(0,0,0,4,0,-\frac{8}{3},-4,0,\frac{8}{3}\right), & A_4 = \mathrm{diag}\left(0,0,0,0,\frac{8}{3},-\frac{8}{3},0,-\frac{8}{3},\frac{8}{3}\right)
\end{align*}
where $\mathrm{diag}(v)$ is the diagonal matrix whose diagonal is the vector $v$, be the defining tuple of the free spectrahedron $\D_A$. Then the tuple $X = (X_1,X_2,X_3,X_4)$ for
\begin{align*}
    X_1 &= \left(
\begin{array}{cc}
 -\frac{1}{2} & 0 \\
 0 & \frac{3}{10} 
\end{array}
\right)\qquad
    X_2 =  \left(
\begin{array}{cc}
 \frac{1}{2} & \frac{\sqrt{\frac{3}{5}}}{4} \\
 \frac{\sqrt{\frac{3}{5}}}{4} & -\frac{1}{5} \\
\end{array}
\right) \\
    X_3 &= \left(
\begin{array}{cc}
 \frac{1521520 \sqrt{3}-619599 \sqrt{182}}{1019200 \sqrt{3}-1197204 \sqrt{182}} & 0 \\
 0 & -\frac{1}{4} \\
\end{array}
\right) \\
    X_4 &= \left(
\begin{array}{cc}
 \frac{5 \left(1664 \sqrt{546}-124455\right)}{3143688} & -\frac{4 \left(1820 \sqrt{15}+669 \sqrt{910}\right)}{392961} \\
 -\frac{4 \left(1820 \sqrt{15}+669 \sqrt{910}\right)}{392961} & \frac{11200 \sqrt{546}-429603}{3143688} \\
\end{array}
\right)
\end{align*}
is a \mnae  point of $\D_A$. 

The tuple $X$ was computed by first taking an interior point of $\D_A$ with rational entries, and then perturbing each entry in order increase the kernel dimension $k_{A,X}$ to a suitable size. We can see the result of this method in the structure of $X$, as each of the $X_i$'s is progressively more complicated.

Of note in this example is that the defining tuple $A$ is a tuple of diagonal matrices, and thus $\D_A$ is a free polytope. It is thanks to this that we can use exact arithmetic to verify
that the tuple $X = (X_1,X_2,X_3,X_4)$
 is \mnae. 

\sec{Exact Arithmetic Point Generation} \label{section:igors-method}

This section concerns methods for producing 
provable examples of \mnae  points $X$.

We look at two ``exact'' methods. The first has been effective at producing lots of examples $X$ when $g=3$, $d=4$, and $n = 3.$ The second though, run extensively for 
 $g=2$, $d=3$, and $n = 8$, failed to produce any exact examples, though it produced many examples which are numerically promising. 
Beyond these parameters,  we have not explored either algorithm since our implementations require some intervention, hence are not fast. 
The example of a \mnae  point in \Cref{section:exact-points} was produced using the first of these exact methods.

\ssec{An algorithm for finding exact arithmetic extreme points}

The algorithm described below generates a boundary point $X$ and then
dilates in a precise manner to a point
\[
Y = \begin{pmatrix}
X & \beta \\ \beta^T & 0
\end{pmatrix}
\]
such that $\dim \ker \LA(Y) = 2$.  Such a $Y$ is then a good candidate for a \mnae  point for $g=3,d=5,$ and $n = 3$, since the \arvcnt here is three, and the \extcnt is 2.

\begin{algo}
\label{algo:exact-dilation}
Let $A \in SM_d(\{-1,0,1\})^g$ such that $\D_A$ is a bounded real free spectrahedron and fix $n\in \N$. Pick a $K \in \{-1,0,1\}^{d n}$ uniformly at random and solve (rational arithmetic) the linear systems
\begin{equation}\label{eq:linearsystems}
\begin{split}
    \LA(X) K &= 0 \\
    \lA(\beta^T)K &= 0 
\end{split}
\end{equation}
for $X\in SM_n(\Q)^g$ and $\beta \in M_{n\times 1}(\Q)^g$. 

\begin{enumerate}[\rm(1)]
    \item If no solution exists for this $K$, choose a new $K \in \{-1,0,1\}^{d n}$ uniformly at random and solve (rational arithmetic) the linear systems \eqref{eq:linearsystems}.
   
    \item If one solution exists for this $K$, we must then check that $\LA(X)$ is positive semidefinite (floating point arithmetic). If $\LA(X)$ is not positive semidefinite, we discard this $K$ and choose a new $K \in \{-1,0,1\}^{d n}$ uniformly at random and solve (rational arithmetic) the linear systems \eqref{eq:linearsystems}.

    \item If there are infinitely many solutions, we then pick a tuple $X$ in the solution space such that $\LA(X)$ is positive semidefinite.

\item 
Once we have such an $X$ and $\beta$, we let
\begin{align*}
    \widehat{Y}(\widehat \alpha) = \begin{pmatrix}
        X & \widehat{\alpha}\beta \\
        \widehat{\alpha}\beta^T  & 0
    \end{pmatrix}
\end{align*}
where $\widehat{\alpha} \in \R$.
Let $\alpha$ denote the smallest non zero root  of the
derivative 
$p_1(\widehat \alpha) :=\frac {d \chi_{\widehat \alpha}(t) }{dt}_{|_{t=0}}$
of the 
characteristic polynomial $\chi_{\widehat \alpha}(t)$ of $\LA(\widehat{Y}(\widehat \alpha ))$.

\item Denote
$p_2(\widehat \alpha) :=\frac {d^2 \chi_{\widehat \alpha}(t) }{d^2t}_{|_{t=0}}$. If $p_2(\widehat \alpha) = 0$, then
we generate a new $X$ and $K$ and repeat the process. Otherwise return
\[
Y:= \widehat Y (\alpha).
\]
\een

\noindent The algorithm uses exact arithmetic so that $A$, $Y$, and $K$ have entries which are algebraic numbers.
\end{algo}

\begin{theorem} \label{theorem:igors-method}
If the above algorithm terminates,
then the point $Y$ it returns will belong to $\cD_A$ and have $k_{A,Y} = 2$. 
Hence, if $g=3$,
$d=4$, and $Y \in \SM{3}{3}$, we have
that the Arveson 
\Cref{eq:arv} has more unknowns than constraints,
so $Y$ is not an Arveson extreme point.
\end{theorem}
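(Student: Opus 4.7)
\smallskip
\noindent\textbf{Proof plan.}
The plan is to verify, in order, $\LA(Y)\psd 0$, $k_{A,Y}=2$, and the Arveson non-extremeness, by tracking the spectrum of $\LA(\widehat Y(\widehat\alpha))$ as a continuous family in $\widehat\alpha$ on the interval $[0,\alpha]$. First, applying a canonical shuffle (permutation conjugation) rewrites
\[
\LA(\widehat Y(\widehat\alpha))\;\sim\;\begin{pmatrix} \LA(X) & \widehat\alpha\,\lA(\beta) \\ \widehat\alpha\,\lA(\beta^T) & I_d \end{pmatrix}.
\]
The two linear-system constraints built into \Cref{algo:exact-dilation} say exactly that $K\in\ker\LA(X)$ and $\lA(\beta^T)K=0$, so the vector $(K,0)$ lies in the kernel of the block matrix above for every $\widehat\alpha$. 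Hence $t$ divides $\chi_{\widehat\alpha}(t)$ for every $\widehat\alpha$, and writing the $N=d(n+1)$ eigenvalues of $\LA(\widehat Y(\widehat\alpha))$ continuously as $\lambda_1(\widehat\alpha),\dots,\lambda_N(\widehat\alpha)$, we may arrange $\lambda_1(\widehat\alpha)\equiv 0$.

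Second, expanding $\chi_{\widehat\alpha}(t)=\prod_i (t-\lambda_i(\widehat\alpha))$ in elementary symmetric polynomials and using $\lambda_1(\widehat\alpha)=0$ yields
\[
p_1(\widehat\alpha)=(-1)^{N-1}\prod_{i\ge 2}\lambda_i(\widehat\alpha)\qquad\text{and}\qquad p_2(\widehat\alpha)=2\,(-1)^{N}\sum_{j\ge 2}\prod_{\substack{k\ge 2\\ k\ne j}}\lambda_k(\widehat\alpha).
\]
Thus $p_1(\widehat\alpha)=0$ iff at least one $\lambda_i$ with $i\ge 2$ vanishes at $\widehat\alpha$, and when $p_1(\widehat\alpha)=0$ the condition $p_2(\widehat\alpha)\ne 0$ is equivalent to exactly one such $\lambda_i$ vanishing. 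Since the algorithm picks $\alpha$ to be the smallest positive root of $p_1$ and demands $p_2(\alpha)\ne 0$, this already gives $k_{A,Y}=\dim\ker\LA(Y)=2$. For the positive semidefiniteness of $\LA(Y)$, observe that at $\widehat\alpha=0$ the shuffled matrix is block-diagonal with $\LA(X)\psd 0$ and $I_d\succ 0$, so, generically (when the random $K$ makes $\dim\ker\LA(X)=1$), exactly one eigenvalue equals $0$ and the remaining $N-1$ are strictly positive. If some $\lambda_i$ with $i\ge 2$ were to turn negative somewhere in $(0,\alpha)$, the intermediate value theorem would force it to cross $0$ at an intermediate point, producing a positive root of $p_1$ smaller than $\alpha$ and contradicting the minimality of $\alpha$. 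Hence all $\lambda_i$ stay $\ge 0$ on $[0,\alpha]$, so $\LA(Y)\psd 0$ and $Y\in\cD_A$.

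Finally, the Arveson consequence is immediate from \Cref{it:ArvEq} of \Cref{theorem:Equations}: the homogeneous system \Cref{eq:arv} in the unknown $\beta\in M_{n\times 1}(\R)^g$ consists of $d\,k_{A,Y}$ scalar equations in $gn$ unknowns, which for $(g,d,n,k_{A,Y})=(3,4,3,2)$ is $8$ equations in $9$ unknowns; a nontrivial $\beta$ therefore exists, so $Y$ is not Arveson extreme. The main technical hurdle is the continuity/IVT step, together with the implicit requirement that $\dim\ker\LA(X)=1$: without it, zero eigenvalues of $\LA(X)$ could be pushed strictly negative by the rank-one correction $-\widehat\alpha^{2}\lA(\beta)\lA(\beta)^T$ for arbitrarily small $\widehat\alpha>0$, placing the returned dilation outside $\cD_A$.
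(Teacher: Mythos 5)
Your proof is correct and takes essentially the same route as the paper: a canonical shuffle, the common kernel vector $(K,0)$ that persists for all $\widehat\alpha$, the factorization of $\chi_{\widehat\alpha}$ in terms of the non-identically-vanishing eigenvalues $\lambda_i(\widehat\alpha)$, the identification $p_1(\widehat\alpha)=(-1)^{N-1}\prod_{i\ge 2}\lambda_i(\widehat\alpha)$ together with the $p_2\neq 0$ condition to pin down $k_{A,Y}=2$, an intermediate value theorem argument to keep the eigenvalues nonnegative on $[0,\alpha]$, and the \arvcnt to conclude $Y$ is not Arveson extreme. Your one addition — making explicit that the IVT step needs $\lambda_j(0)>0$ for all $j\ge 2$, i.e.\ $\dim\ker\LA(X)=1$ — is a valid observation; the paper's proof quietly assumes this (it asserts $\lambda_j(0)>0$ without comment), and it holds generically for the random $K$ in \Cref{algo:exact-dilation}, but you were right to flag it as the one place where the argument could break.
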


\begin{proof}

Let $X$, $\beta$, $K$, and $\alpha$ be given by the algorithm above and let
\begin{align*}
    Y = \begin{pmatrix}
        X & \alpha\beta \\
        \alpha\beta^T & 0
    \end{pmatrix}.
\end{align*}
We know that
\begin{align*}
    \LA(Y) &= \Pi^T\begin{pmatrix}
        \LA(X) & \alpha\lA(\beta) \\
        \alpha \lA(\beta^T) & I
    \end{pmatrix}\Pi\\
\end{align*}
for some unitary $\Pi$  (namely the canonical shuffle),
so letting
\begin{align*}
    K' = \Pi^T\begin{pmatrix}
        K \\
        0
    \end{pmatrix},
\end{align*}
we see that $\LA(Y) K' = 0$ as
\begin{align*}
    \LA(Y) &= \Pi^T
    \begin{pmatrix}
        \LA(X) & \alpha\lA(\beta) \\
        \alpha \lA(\beta^T) & I
    \end{pmatrix} \Pi K' \\
    &= \Pi^T\begin{pmatrix}
        \LA(X)K & 0 \\
        \alpha \lA(\beta^T) K & 0
    \end{pmatrix} = 0
\end{align*}
as $\LA(X)K = \lA(\beta^T) K = 0.$
Thus, the characteristic polynomial $\chi_\alpha(t)$ of $\LA(Y)$ has no constant term. Moreover, by the definition of $\alpha$, we have $\chi_\alpha'(0) = 0$ and $\chi_\alpha''(0) \neq 0$. Thus $\chi_\alpha(t) = t^2 q_\alpha(t)$ for some polynomial $q_\alpha$ such that $q_\alpha(0) \neq 0$, consequently $\LA(Y)$ has a nullspace of dimension $2$.

We now aim to show that $Y\in \D_A$. To do this, we first note that taking $\widehat{\alpha} = 0$, we get that $\LA(\widehat{Y}(0))$ is unitarily equivalent to
\[
\begin{pmatrix}
        \LA(X) & 0 \\
        0 & I
    \end{pmatrix}
\]
which is clearly positive semidefinite. 
Moreover, since $\LA(\widehat{Y}(\widehat{\alpha})) K' = 0$ for all $\widehat{\alpha}$, we have $\chi_{\widehat{\alpha}}(t) = t(t - \lambda_2(\widehat{\alpha}))\cdots(t - \lambda_{nd+d}(\widehat{\alpha}))$ where the $\lambda_i(\widehat{\alpha})$ are the eigenvalues of $\LA(\widehat{Y}(\widehat{\alpha}))$ that are not identically zero in $\widehat{\alpha}$. 

Thus, $p_1(\widehat{\alpha}) = (-1)^{(n+1)d - 1} \lambda_2(\widehat{\alpha}) \cdots \lambda_{nd+d}(\widehat{\alpha}).$ We note that $p_1(\widehat{\alpha}) = 0$ if and only if $\lambda_i(\widehat{\alpha}) = 0$ for some $i = 2,3,\dots,nd+d$ and we pick $\alpha$ to be the smallest positive root of $p_1(\widehat{\alpha}).$ Thus, there must be some $i$ such that $\lambda_i(\alpha) = 0$. Moreover, if $\lambda_j(\alpha) < 0$, then, by the intermediate value theorem since $\lambda_j(0) > 0$, there must exist some $0 < \alpha_0 < \alpha$ where $\lambda_j(\alpha_0) = 0$. This implies $p_1(\alpha_0) = 0$, a contradiction to the assumption that $\alpha$ is the smallest positive root of $p_1(\widehat{\alpha})$. Thus for $j \neq i$, $\lambda_j(\alpha) \geq 0$ and hence $Y\in \D_A$.

The \arvcnt (\ref{eq:arvBeanCount}) for $g=3$, $d=4$, and $n=3$ is $3.$ So by \Cref{cor:BeanCounts}, if $Y\in \D_A(3)$ then $k_{A,Y} = 2 < 3$ implies that $Y$ is not Arveson extreme.
\end{proof}

Thus, any point produced with this algorithm at these parameters cannot be Arveson extreme, but is potentially matrix extreme. The matrix $\LA(Y)$ will have entries that are algebraic numbers, and so the kernel and matrix extreme equations can, in principle, be computed in exact arithmetic.

\ssec{A property of the characteristic polynomial $\chi_\alpha$}

\begin{lemma}
If
\[
Y = \begin{pmatrix}
    X & \alpha\beta \\
    \alpha\beta^T & 0
\end{pmatrix}
\]
for $X\in\SM{g}{n}$, $\beta\in M_{n\times 1}$, and $\alpha\in \R$, then the characteristic polynomial $\chi_\alpha(t)$ of $\LA(Y)$ has coefficients that are degree $dn$ polynomials in $\alpha^2$.
\end{lemma}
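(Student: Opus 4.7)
The plan is to establish two facts: (a) the polynomial $\chi_\alpha(t)$ is even in $\alpha$, whence the coefficient of each power of $t$ is a polynomial in $\alpha^2$; and (b) the $\alpha^2$-degree of each such coefficient is at most $dn$.

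For (a) I would introduce the orthogonal sign-flip $D = \mathrm{diag}(I_n,-1) \in M_{n+1}(\R)$. A direct block computation yields $D\, Y_i(\alpha)\, D = Y_i(-\alpha)$ for every $i$, so, extending via Kronecker product and using $D^2 = I_{n+1}$,
\[
(I_d \otimes D)\, \LA(Y(\alpha))\, (I_d \otimes D) \;=\; \LA(Y(-\alpha)).
\]
Similar matrices share characteristic polynomials, so $\chi_{-\alpha}(t) = \chi_\alpha(t)$. Hence each coefficient of $t^k$ is even in $\alpha$ and therefore lies in $\R[\alpha^2]$.

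For (b) I would unpack the Leibniz expansion of $\det(tI - \LA(Y))$. Every entry of $\LA(Y) = I_{d(n+1)} + \sum_i A_i \otimes Y_i$ has degree at most $1$ in $\alpha$, since the $\alpha$-dependence is confined to the off-diagonal strips of each block $Y_i$. The coefficient of $t^k$ in $\chi_\alpha(t)$ is (up to sign) the sum of all principal $(d(n+1)-k)$-minors of $\LA(Y)$, each a polynomial of degree at most $d(n+1)$ in $\alpha$. Combined with (a), the $\alpha^2$-degree of each coefficient is at most $\lfloor d(n+1)/2\rfloor$, which is bounded by $dn$ whenever $n \geq 1$. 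The only genuine insight needed is the sign-flip trick in (a); part (b) is then a routine counting exercise. As an alternative, one could reuse the canonical shuffle employed in the proof of \Cref{theorem:igors-method} to bring $\LA(Y)$ to the block form $\bigl(\begin{smallmatrix} \LA(X) & \alpha \lA(\beta)\\ \alpha \lA(\beta^T) & I_d\end{smallmatrix}\bigr)$ and conjugate by $\mathrm{diag}(I_{dn},-I_d)$ to reach the same evenness conclusion.
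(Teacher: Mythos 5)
Your proof is correct, and it genuinely differs from the paper's. The paper's proof proceeds by applying the canonical shuffle to bring $\LA(Y)$ to the block form $Z=\bigl(\begin{smallmatrix}\LA(X)&\alpha\lA(\beta)\\\alpha\lA(\beta^T)&I\end{smallmatrix}\bigr)$ and then invoking the Schur determinant formula to rewrite $\chi_\alpha(t)=(1-t)^d\det\bigl(\LA(X)-tI-\tfrac{\alpha^2}{1-t}\lA(\beta)\lA(\beta^T)\bigr)$; the $\alpha^2$-dependence is visible directly in the inner $dn\times dn$ matrix, giving the $dn$ degree bound in one stroke. Your route replaces the Schur complement with the sign-flip conjugation $D=\mathrm{diag}(I_n,-1)$, which shows $\chi_{-\alpha}=\chi_\alpha$ so that each $t^k$-coefficient is automatically a polynomial in $\alpha^2$, and then a Leibniz count on principal minors (each entry being at most linear in $\alpha$) caps the $\alpha$-degree by the matrix size $d(n+1)$, hence the $\alpha^2$-degree by $\lfloor d(n+1)/2\rfloor\le dn$. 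The sign flip is arguably the cleaner way to extract evenness, and your degree bound $\lfloor d(n+1)/2\rfloor$ is in fact slightly sharper than the stated $dn$ for $n\ge2$; what the paper's Schur approach buys instead is an explicit rational factorization of $\chi_\alpha$ that isolates the $\alpha^2$-dependence into a single $dn\times dn$ determinant, which is more in the spirit of the surrounding exact-arithmetic computations. Your alternative remark about conjugating the shuffled form $Z$ by $\mathrm{diag}(I_{dn},-I_d)$ is also correct and is a hybrid of the two viewpoints.
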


\begin{proof}
Using the canonical shuffle, we can show that $\LA(Y)$ is unitarily equivalent to
\begin{align*}
    Z = \begin{pmatrix}
        \LA(X) & \alpha\lA(\beta) \\
        \alpha \lA(\beta^T) & I
    \end{pmatrix}.
\end{align*}
The eigenvalues of a matrix, and thus the characteristic polynomial, are invariant under unitary equivalence, so it is sufficient to compute the characteristic polynomial of $Z$. Note that $Z \in SM_{d(n+1)}(\R[\alpha])$ and thus $\chi_\alpha(t) \in \R[\alpha][t]$. 
\begin{align*}
    \chi_\alpha(t) = \det\begin{pmatrix}
        \LA(X) - tI & \alpha\lA(\beta) \\
        \alpha \lA(\beta^T) & (1-t)I
    \end{pmatrix}
\end{align*}
so for $t\neq 1$, we can use the Schur determinant formula to show that
\begin{align*}
    \det\begin{pmatrix}
        \LA(X) - tI & \alpha\lA(\beta) \\
        \alpha \lA(\beta^T) & (1-t)I
    \end{pmatrix} &= \det((1-t)I) \det([\LA(X) - tI - \frac{\alpha^2}{(1-t)} \lA(\beta) \lA(\beta^T)]) \\
    &= (1-t)^d\det([\LA(X) - tI - \frac{\alpha^2}{(1-t)} \lA(\beta) \lA(\beta^T)]).
\end{align*}
Thus $\chi_\alpha(t)$ depends only on $\alpha^2$ for $t\neq 1$. The matrix in the square brackets above is $dn \times dn$, so this determinant has degree $dn$ in $\alpha^2$. 
\end{proof}

\sssec{Experiments with   \Cref{algo:exact-dilation}}
We ran \Cref{algo:exact-dilation} one hundred times; every time an $X$, $\beta$, and $K$ were found. Each time the procedure was run, multiple kernels $K$ were generated until an $X$ and $\beta$ could be found such that $\LA(X) K = 0$ and $\lA(\beta^T)K = 0.$ We have only run this method for a single fixed defining tuple $A$.

On two randomly chosen occasions, we went through the hour long process of trying to compute the matrix extreme equations exactly. While the equations could in principle be computed in exact arithmetic, the calculation for determining if the matrix extreme equations had a nullspace became too difficult to solve.

We then numerically determined the singular values of the matrix extreme equations, \Cref{eq:mat}. In every case, the largest singular value was less than $4$ and $80$ of the points had smallest singular value on the order of $10^{-4}.$ Hence, we are confident that those $80$ points are matrix extreme. In order to be Arveson extreme, the points would require a kernel dimension of at least $3$. However, in all cases, we computed the eigenvalues of $\LA(X)$ numerically and determined two of the eigenvalues to be zero. This determination was made as the smallest two eigenvalues were on the order of $10^{-15}$ in all cases, and the next smallest eigenvalue was on the order of $10^0$. Thus, the points could have a kernel of at most dimension $2$.  Note here that Theorem \ref{theorem:igors-method} shows that $L_A(Y)$ has kernel dimension $2$, so the numerical results match our expectations. 

\begin{remark}
Nowhere in \Cref{theorem:igors-method} do we use the fact that $A,K$ were chosen from $\set{-1,0,1}.$ In fact, we may pick $A,K$ with rational entries. 
The choice to go with $\set{-1,0,1}$ entries was made solely
 to reduce the computational complexity of the experiments.
\end{remark}

\subsection{$g=2$, the Wild Disc}
\label{section:exactAlgg2}

For $g = 2$, it is still open whether there exist \mnae  points. 
This difficulty is demonstrated by an informal method 
for the specific example of the wild disc.
This method works well to produce floating point \mnae \ candidates but 
many
 attempts with exact arithmetic did not
 yield a provable example.
 %
 However, we strongly 
   strongly conjecture that \mnae  points exist for the wild disc.

The $g=2$ wild disc
is the free spectrahedron described by
\[
A_1=\begin{pmatrix}
0& 1 & 0 \\
1 & 0 & 0\\
0&0&0
\end{pmatrix},
\quad
A_2=\begin{pmatrix}
0& 0 & 1 \\
0 & 0 & 0\\
1&0&0
\end{pmatrix}.
\]
Thus
\[
L_A(X,Y)=\begin{pmatrix}
1& X & Y \\
X & 1 & 0\\
Y &0& 1
\end{pmatrix},
\]
and an easy Schur complement calculation shows that
\[
\cD_A=\{(X,Y) \mid S(X,Y):=I-X^2-Y^2\succeq0\}.
\]

\subsubsection{Finding many \mnae points}

\begin{lemma}
We have $(X,Y)\in \partial \cD_A$ iff
$S(X,Y)\succeq0$ is singular.
More precisely,
\[
\ker L_A(X,Y)= \Big\{ \begin{pmatrix} v \\ -Xv \\ -Y v \end{pmatrix} \mid
v \in \ker S(X,Y) \Big\}.
\]
\end{lemma}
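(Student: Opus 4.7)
The strategy is to directly compute the kernel of $L_A(X,Y)$ by exploiting its block structure, which simultaneously yields both the explicit description of the kernel and (via the usual Schur complement identity) the boundary characterization.

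First I would work out the kernel equation. Writing a generic vector conformably with the $3\times 3$ block decomposition of $L_A(X,Y)$ as $(u,w,z)^T$, the equation $L_A(X,Y)(u,w,z)^T=0$ becomes the system
\[
u + Xw + Yz = 0,\qquad Xu + w = 0,\qquad Yu + z = 0.
\]
The last two equations solve immediately to give $w = -Xu$ and $z = -Yu$. Substituting these into the first equation collapses it to $(I - X^2 - Y^2)u = S(X,Y)u = 0$. Conversely, any $v \in \ker S(X,Y)$ produces a kernel vector of the claimed form. This proves the ``more precisely'' description of $\ker L_A(X,Y)$, and in particular it shows that the kernels of $L_A(X,Y)$ and $S(X,Y)$ have the same dimension.

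For the boundary characterization, I would use the standard Schur complement identity: taking the Schur complement of the bottom-right identity block of $L_A(X,Y)$ yields exactly $I - X^2 - Y^2 = S(X,Y)$, so $L_A(X,Y) \succeq 0$ if and only if $S(X,Y) \succeq 0$. Thus $(X,Y)\in \cD_A$ is controlled by $S(X,Y)$, and $(X,Y)\in\partial\cD_A$ means $L_A(X,Y)\succeq 0$ fails to be positive definite, i.e., has nontrivial kernel. By the kernel correspondence just established, this is equivalent to $S(X,Y)\succeq 0$ being singular.

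There is no real obstacle here; the argument is a direct block computation. The only mild care required is to treat the block entries as Kronecker products $A_i \otimes X_i$ when $X,Y$ are matrices rather than scalars, but the algebra is formally identical since the blocks of $L_A(X,Y)$ are either $I$, $X$, $Y$, or $0$, which commute with the block partitioning.
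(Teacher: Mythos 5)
Your proof is correct, and since the paper's own proof is simply the word ``Straightforward,'' your direct block computation is exactly the expected argument: solve the last two block rows for $w = -Xu$, $z = -Yu$, substitute into the first to obtain $S(X,Y)u = 0$, and use the Schur complement of the identity block to get the $\succeq$ equivalence. The only point you could spell out a bit more is the implication ``$L_A(X,Y)\succeq 0$ singular $\Rightarrow (X,Y)\in\partial\cD_A$'' (e.g., if $S(X,Y)v=0$ then $v^T S(tX,tY)v = (1-t^2)\lVert v\rVert^2 < 0$ for $t>1$, so $(X,Y)$ is a limit of points outside $\cD_A$), but this is a standard fact and your level of detail matches the paper's.
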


\begin{proof}
Straightforward.
\end{proof}

Based on experimental conclusions described in
\Cref{section:g=2}, we expect that there are
pairs $(X,Y)$  of $8\times8$ matrices that are \mnae points of $\cD_A$. Indeed, one can attempt to produce many such examples by starting with a random $8\times8$ matrix
\[
0\preceq S\preceq I
\] 
whose  rank is  $8-3=5$. Then choose $X$ with
\[X^2\preceq I-S. \]
All this is easily done with 
exact arithmetic.

The difficult part is computing exact  
\beq 
\label{eq:xys}
Y=(I-X^2-S)^{1/2}.
\eeq 
In cases where it was easy to compute exact $Y$, we found
that $(X, Y)$ were not \mnae. 
The approaches we used to generate easy to compute $Y$ typically introduced reducibility or some other degeneracy, hence these points were not \mnae.
On the other hand 
with nonzero probability
(in informal experiments)
the pair $(X,Y)$ computed in floating point was a \mnae point.

Alternately,  given $S$ as above, to find $X,Y$
one could 
pick a $2\cdot 8\times 8$ isometry
\[
\begin{pmatrix} W_1\\W_2\end{pmatrix}
\]
(e.g.~by picking first 8 columns of a $16\times16$ random unitary matrix) and let
\[
\begin{pmatrix} X'\\Y' \end{pmatrix}:=
\begin{pmatrix} W_1\\W_2 \end{pmatrix} (I-S)^{1/2}.
\]
Of course, $X',Y'$ won't be self-adjoint. So we correct for this by
multiplying $W_i$ with a symmetry $U_i$ commuting with $|W_i(I-S)^{1/2}|$, where $|R|:=(R^TR)^{1/2}$. Then
\[
X= U_1 |W_1(I-S)^{1/2}|, \quad Y=U_2 |W_2(I-S)^{1/2}|
\]
are self-adjoint with $I-X^2-Y^2=S$.
Finding appropriate unitaries $U_1, U_2$
with exact arithmetic is formidable and pursuing this did not produce an exact \mnae  point.
However, as before
with nonzero probability (in our experiments) the floating point pair $(X,Y)$ was a \mnae
point.

\sec{Numerical Algorithms for Dilation to Extreme Points} 
\label{sec:algorithms}

Now we turn to numerical experiments; this section gives the underlying algorithms.
This paper will use these algorithms in two different ways.
One is to generate numerical candidates for \mnae  points;
this will be the main tool behind 
\Cref{sec:experiments}.
The second goal is a reliable and accurate algorithm for producing a free Caratheodory expansion of  a matrix tuple $Y$ inside
a free spectrahedron;
this is equivalent to dilating $Y$
to an Arveson extreme point, see \Cref{sec:FreeExtSignificance}.
Success rates and statistics on 
complication of the dilation  are
 in
 \Cref{sec:free-carath}. We mention that all of the algorithms described in this section have been implemented  and are publicly available in NCSE \cite{EOYH21}.

As motivation we elaborate on our \mnae  point objective.
While there exist points that are \mnae  for free spectrahedra in $\SM{3}{n}$ and $\SM{4}{n}$, as demonstrated in \Cref{section:exact-points}, there is still the question of how frequently these points occur. To answer this question, we turn from methods of generating exact arithmetic extreme points to generating extreme points numerically. Numerical testing is an effective way of generating and testing a large number of points relatively quickly in order to get an idea (without proof) of whether or not example points are rare.

The most obvious method of generating an extreme point of a free spectrahedron $\D_A$ would be to optimize a random linear functional $\ell(X)$ under the constraint $\LA(X) \psd 0$. However, in practice a surprisingly large majority of points generated by this method are Arveson extreme and have large $k_{A,X}$, as shown in \cite{EFHY21}. Intuitively, and in fact empirically, we expect that points with large $k_{A,X}$ are more likely to be Arveson extreme. This intuition is based on  \Cref{cor:BeanCounts}. As such, we wish to employ a method of generating extreme points that yields points with small kernels.\looseness=-1

\subsection{Extreme point generation via dilation} \label{subsection:extreme-point-generation}

We  begin by describing 
an algorithm  found in \cite{EFHY21}, which tries to dilate
a given $Y$ of size $n_0$ 
in a free spectrahedron $\cD_A$ 
to an 
Arveson extreme point.
This is done by computing a series of carefully chosen dilations starting with $Y$.

 We find that the 
  success  of this algorithm depends heavily on accuracy in computing $\ker \LA(Y^{j})$.
  Nullspace Purification, a method for achieving much improved accuracy, 
  is introduced and tested here
  in conjunction with 
  \Cref{algo:pure-dilation-algo}.
  Using Nullspace Purification results in  \Cref{algo:pure-dilation-algo} becoming  reliable for broad classes of problems.\looseness=-1
  
\ssec{The workhorse extremal dilation algorithm}

\begin{algo}[Extremal Dilation Algorithm \cite{EFHY21}]
\label{algo:pure-dilation-algo}
Let $A\in\SM{g}{d}$ be such that $\D_A$ is a bounded  free spectrahedron and let an initial point $Y^0 \in \D_A(n_0)$ be given. For integers $j=0,1,2,\dots$ such that $Y^j$ is not an Arveson extreme point of $\D_A$, define
\begin{align*}
    Y^{j+1}:=\begin{pmatrix}
        Y^j & c_j \widehat{\beta}^j \\
        c_j (\widehat{\beta}^j)^T & \widehat{\gamma}^j
    \end{pmatrix}
\end{align*}
where $\widehat{\beta}^j$ is a nonzero solution to
\begin{align*}
    \ker \LA(Y^j) \subseteq \ker \lA(\beta^T), \quad \beta \in M_{n\times 1}(\R)^g
\end{align*}
and where $c_j$ and $\widehat{\gamma}^j$ are solutions to the sequence of maximization problems
\begin{align*}
    c_j:=& \quad \underset{c\in\R,\gamma\in\R^g}{\mathrm{Maximizer}} \quad c \\
    \mathrm{s.t.}& \quad \LA \begin{pmatrix}
        Y^j & c \widehat{\beta}^j \\
        c (\widehat{\beta}^j)^T & \gamma
    \end{pmatrix} \psd 0 \\
    \mathrm{and} \quad \widehat{\gamma}^j :=& \quad \underset{\gamma\in\R^g}{\mathrm{Maximizer}} \quad \ell(\gamma) \\
    \mathrm{s.t.}& \quad \LA \begin{pmatrix}
        Y^j & c_j \widehat{\beta}^j \\
        c_j (\widehat{\beta}^j)^T & \gamma
    \end{pmatrix} \psd 0. 
\end{align*}
Here $\ell:\R^g\to\R$ is a random linear functional.

If $Y^j$ is an Arveson extreme point, we instead terminate the algorithm. 
\end{algo}

\ssec{Generating \mnae  points}
In these tests, we do not aim to generate Arveson extreme points, and in fact, we are trying to actively avoid generating Arveson extreme points, so an algorithm for generating such points may seem like a strange place to look. 

However, if $Y^0$ is the initial point, and $Y^j$ is the point generated by \Cref{algo:pure-dilation-algo}, then there are $j - 1$ points, $Y^1, Y^2, \dots, Y^{j - 1}$, that we have generated that are ``close'' to being Arveson extreme but, crucially, are not. As we describe in this section, these sets of points provide many examples of \mnae  points.

In the experiments detailed in \Cref{sec:experiments}, we will employ the following algorithm for numerical extreme point generation.

\begin{algo}
\label{algo:dilation-algorithm}
Let $A\in\SM{g}{d}$ such that $\D_A$ is a bounded real free spectrahedron. Given an interior point $X \in \D_A(n_0)$, set $Y^0 = \frac{1}{1-\lambda} X$, where $\lambda$ is the smallest eigenvalue of $\LA(X)$. This guarantees that $Y^0$ is a boundary point of $\D_A(n_0)$. We then apply \Cref{algo:pure-dilation-algo} with the difference that the algorithm terminates if some $Y^j$ is either a matrix extreme or an Arveson extreme point of $\D_A$.

\end{algo}

\begin{theorem} \label{theorem:algorithm-convergence}
Let $\D_A$ be a bounded free spectrahedron and let $X\in\D_A$. Then, with probability $1$ \Cref{algo:dilation-algorithm} terminates after some finite number $k$ many steps and $Y^k$ is either a matrix extreme or Arveson extreme point of $\D_A$.
\end{theorem}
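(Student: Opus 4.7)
The plan is to deduce termination from the analogous convergence result for Algorithm \ref{algo:pure-dilation-algo} established in \cite{EFHY21}. Algorithm \ref{algo:dilation-algorithm} executes exactly the iteration of Algorithm \ref{algo:pure-dilation-algo}, but halts strictly earlier: it terminates at the first $j$ where $Y^j$ is matrix extreme, in addition to the Arveson extreme stopping condition already present in Algorithm \ref{algo:pure-dilation-algo}. Consequently the iteration count of Algorithm \ref{algo:dilation-algorithm} is bounded above by that of Algorithm \ref{algo:pure-dilation-algo}, and when it halts the output $Y^k$ is matrix extreme or Arveson extreme by the stopping rule itself. Hence it suffices to invoke the known probability-one termination of Algorithm \ref{algo:pure-dilation-algo}, whose randomness resides in the sequence of linear functionals $\ell$ used to select each $\hat{\gamma}^j$.

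For completeness, I would sketch the convergence of Algorithm \ref{algo:pure-dilation-algo}. The driving invariant is strict kernel growth, $k_{A, Y^{j+1}} \geq k_{A, Y^j} + 1$, at every non-terminating step. This rests on two observations. First, the defining condition $\ker \LA(Y^j) \subseteq \ker \lA((\hat{\beta}^j)^T)$, combined with the canonical shuffle block decomposition of $\LA(Y^{j+1})$ used in the proof of Theorem \ref{theorem:igors-method}, embeds $\ker \LA(Y^j)$ into $\ker \LA(Y^{j+1})$ via zero-extension. Second, maximality of $c_j$ forces at least one additional vector into $\ker \LA(Y^{j+1})$, for otherwise $c_j$ could be perturbed upward while preserving positivity. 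The random choice of $\ell$ picking $\hat{\gamma}^j$ places the iterates in generic position on the boundary face of $\cD_A$ they inhabit, so that the nullity grows fast enough that eventually either the matrix extreme or Arveson extreme rank-nullity count of Corollary \ref{cor:BeanCounts} saturates with trivial dilation subspace, yielding the corresponding extreme property.

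The main obstacle in a fully self-contained proof is upgrading the heuristic ``generic $\ell$ produces favorable kernel growth'' to a rigorous probability-one assertion, since $\ell$ is sampled once yet must control the behavior of every iterate $\hat{\gamma}^j$, each depending nontrivially on its predecessors. I would handle this inductively: at each iteration the set of ``bad'' $\ell$'s is cut out by the simultaneous vanishing of polynomial conditions in $\ell$, hence is a proper algebraic subvariety of the parameter space; a finite union of such subvarieties still has Lebesgue measure zero, so the set of $\ell$ for which Algorithm \ref{algo:pure-dilation-algo} fails to terminate in finitely many steps is measure zero.
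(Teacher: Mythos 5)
Your first paragraph is exactly the paper's argument: the paper likewise reduces to the probability-one termination bound of at most $\mathrm{dilDim}(Y^0) \le g n_0$ steps from \cite[Proposition 2.8]{EFHY21}, observes that \Cref{algo:dilation-algorithm} runs the same iteration as \Cref{algo:pure-dilation-algo} with the sole additions of constructing $Y^0$ by scaling to the boundary and halting early at a matrix extreme point, and then reads off the extremality of the terminal $Y^k$ directly from the stopping rule. The two further paragraphs, which sketch a proof of the cited convergence of \Cref{algo:pure-dilation-algo}, go beyond what the paper does (it treats the citation as a black box), but they add only supporting detail rather than changing the route.
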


\begin{proof}
\Cref{algo:pure-dilation-algo} is the subject of \cite[Proposition 2.8]{EFHY21}. In this proposition, the authors show that if $Y^0 \in \D_A(n_0)$ and $A \in \SM{g}{d}$, then with probability one, the algorithm terminates in at most 
$dilDim(Y^0) \leq gn_0$ steps.

\Cref{algo:dilation-algorithm} is almost identical to 
\Cref{algo:pure-dilation-algo} with the only differences between the two algorithms being the construction of $Y^0$ from the initial point and the termination of \Cref{algo:dilation-algorithm} in the case that some $Y^j$ is matrix extreme. Once $Y^0$ is constructed, we apply \Cref{algo:pure-dilation-algo} and the theory described in paragraph one gives us a bound on the number of steps with probability one. Moreover, if \Cref{algo:dilation-algorithm} terminates after $k$ steps, then either $Y^k$ is matrix extreme or $Y^k$ is Arveson extreme.
\end{proof}

 \begin{remark}
 While termination in theory occurs with probability 1,
 in practice 
 this results in a very high but not perfect success rate,
 see \Cref{sec:experiments}, \ref{sec:free-carath}.
 \end{remark}

Given some $Y^j$ as above, $Y^{j+1}$ can be computed by solving two semidefinite programs. The first optimization computes the $c_{j+1}$ and the second computes the $\widehat{\gamma}^{j+1}$. These semidefinite programs can  be solved numerically. The introduction of numerical error into the problem requires some consideration. The first main consideration it that a given dilation step can fail in the sense that the dilation is not a maximal 1-dilation. In this case, we discard this failed dilation step and try again with a different $\beta^{j+1}$. The $\beta^{j+1}$ in question is chosen by first computing a basis for the dilation subspace of $X$ with respect to $\D_A$ and then taking a random convex combination of the basis vectors. Thus unless the dilation subspace dimension of $X$ is one, we expect with probability $1$ that the newly generated $\beta^{j+1}$ will not be a scalar multiple of the original.

In order to prevent an infinite loop, we impose a limit on the number of failed dilation attempts on a single point to some maximum number. For the experiments below, this maximum was taken to be ten, meaning a point could fail to dilate ten times before the process was aborted. This number was chosen to be low as the importance of any particular point out of 10,000 trials is relatively low. In cases where it is important to dilate a particular point of interest to an Arveson or matrix extreme points, it may be appropriate to take a maximum which is much higher.

Another aspect of the algorithm is that there is a $\gamma \in \SM{g}{1}$ that is generated when we compute $c_{j+1}$, but this $\gamma$ is ``thrown out'' and replaced with $\widehat{\gamma}_{j+1}$ in the $Y^{j+1}$. In our experiments described below, we omit this second step and keep the original $\gamma$. Importantly, \Cref{theorem:algorithm-convergence} does not guarantee the termination of this modified algorithm, but in practice it has been shown to be effective.

Given a bounded free spectrahedron $\D_A$, we can use \Cref{algo:dilation-algorithm} to generate many extreme points $X \in \D_A(n)$ such that $k_{A,X}$ is relatively small. By producing a large number of extreme points in such a manner and then counting the number of \mnae  points, we can get a sense for how common such extreme points are.

\sssec{What Do We Call Zero?} \label{section:what-do-we-call-zero}

There is another issue resulting from the introduction of numerical error, namely, what does it mean for a point $X$ with floating point entries to be an extreme point. We can think of such an $X$ as a sum $X = \widehat{X} + X^\delta$ where $\widehat{X}$ is an extreme point of $\D_A$ and $X^\delta$ as some small, nonzero error term. Such an $X$ may not even be in $\D_A$, and is likely not an extreme point. The best result we can hope to achieve is for the entries of $X^\delta$ to be very small. Thus, instead of aiming to show that $X$ is extreme, we aim to show that $X$ is close to being extreme in the sense that $X^\delta$ is small. We will call such points \df{extreme candidates}.

There are many similar situations where we must make 
a decision about what to take to be zero and often
a decision about which space is the nullspace of a given matrix. We formalize such decisions using the following definition.

\bs

\begin{definition} \label{algo:zero-calling}
Let the matrix $M \in M_{n\times m}(\R)$ with singular values $\lambda_1\geq \lambda_2 \geq  \cdots \geq \lambda_\ell$, where $\ell = \min(n,m)$.Let $\epsilon_{mag},\epsilon_{gap} > 0$ be given ($\epsilon_{mag}$ will be referred to as the magnitude tolerance and $\epsilon_{gap}$ will be referred to as the gap tolerance throughout this paper). A singular value $\lambda_i$, for $i = 2,3,\dots,\ell,$ is said to be the \df{first numerical zero} of $M$ if all of the following are true
\begin{enumerate}[\rm(1)]
   \item for all $j < i$, $\lambda_j$ is not the first numerical zero
    \item $\lambda_i < \epsilon_{mag}$
    \item $ \lambda_i / \lambda_{i-1} < \epsilon_{gap}.$
\end{enumerate}
In other words, a singular value $\lambda_i$ is the \df{first numerical zero} of $M$ if it is the first singular value to be both smaller than the magnitude tolerance and have a sufficiently large gap between it and the previous singular value.

A singular value $\lambda_j$ is called a \df{numerical zero} if $\lambda_j \leq \lambda_i$, where $\lambda_i$ is the first numerical zero.
Define a function
\begin{align}
\Delta (M,\epsilon_{mag},\epsilon_{gap}) \ \mbox{to be the index of the first numerical zero of $M$ }
\end{align}
if one exists and False otherwise. (The function $\Delta$ is captured by DetermineNull in NCSE).
\end{definition}

We now illustrate this by determining if a point $X$ is an extreme candidate of $\D_A$ using a modified version of \Cref{theorem:Equations}. As an example, suppose we are attempting to show that $X$ is an Arveson extreme candidate. As a reminder, 
a point $X\in \D_A(n)$ is a Arveson extreme point if and only if the only solution to the linear equation
\[
\lA(\beta^T) K_{A,X} = 0
\]
in the unknown $\beta \in M_{n \times 1}(\R)^g$ is $\beta = 0$. 
Thus, the first step in determining if $X$ is an extreme candidate is to compute the nullspace $K_{A,X}$ using some numerical method, see the upcoming subsections for further discussion and for tolerances used for  this. The linear map $\beta \mapsto \lA(\beta^T) K_{A,X}$ has a matrix representation which we will denote $M_{A,X,Arv} $. 
Thus the equation above has a nontrivial solution if and only if $M_{A,X,Arv}$ has a nontrivial nullspace. In our experiments we say that the nullspace of $M_{A,X,Arv}$ is trivial if and only if $\Delta(M_{A,X,Arv},10^{-15},10^{-15}) = False$. We chose this very tight tolerance after significant trial and error.

An important choice is what size eigenvalues of $L_A(X)$
to declare numerically 0,
that is,
determine the nullspace of $\LA(X) \succeq 0$.
We approach this using the function $\Delta$. 
Specify 
gap and magnitude tolerance,
$\eps_{gap} , \eps_{mag}$,
and select
$$
\dim \cK_{A,X}:= d n - \Delta(L_A(X),  \eps_{mag},\eps_{gap}) + 1
$$
where $X \in \SM{g}{n}$ and $A \in \SM{g}{d}$.
Once the dimension is selected we can take $\cK_{A,X}$ to be
%
  the range of a  matrix with floating point entries whose columns are orthonormal  eigenvectors of $\LA(X)$ corresponding to the 
  $\dim \cK_{A,X}$ smallest
  (i.e., numerically zero) eigenvalues of $\LA(X)$.

\sssec{Nullspace Purification}

Improving the numerical accuracy of computing the nullspace  $K_{A,X}$ of $L_A(X)$ can significantly improve the accuracy of the Arveson, Euclidean, and matrix extreme equations. One potential avenue for this is to slightly perturb the extreme candidate $X$ to a tuple $X^\eps$ so that the first numerical zero of $\LA(X^\eps)$ is small. The question then arises as to how to compute such a perturbation.

The following algorithm, which we call \df{Full Nullspace Purification}, is an algorithm that computes such a perturbation by solving a linear program. Linear programs can be solved quickly and to a high degree of accuracy (compared to SDP) which makes this method particularly effective.

\begin{algo}[Full Nullspace Purification] \label{algo:nullspacepurification}

We are given a bounded free spectrahedron $\D_A$, a (numerical) boundary point $X$ of $\D_A$ and 
tolerances\footnote{In the experiments reported here the tolerances are set to $\eps_{mag}=10^{-7}, 
\eps_{gap}=10^{-2}$
and $\epsilon= 10^{-7}$. 
} $\eps_{mag}, \eps_{gap}$
and $\epsilon > 0$.

Compute
$$
\dim \cK_{A,X}:= d n - \Delta(L_A(X),  \eps_{mag},\eps_{gap}) + 1
$$
and compute $\cK_{A,X}$, the corresponding nullspace of $L_A(X)$.
Recall that the nullspace of $\LA(X)$ is nontrivial, since $X$ is a boundary point. 

Let
 $\eta_\epsilon \in \R$, $Y^\epsilon \in \SM{g}{n}$ be the solution to the linear program
\begin{align} \label{eq:np-program}
\begin{split}
    \eta_\epsilon, Y^\epsilon :=& \quad \underset{\eta\in\R \; Y\in\SM{g}{n}}{\mathrm{Minimizer}} \quad \eta \\
    \mathrm{s.t.}   
    &\phantom \quad \max_{i = 1,2,\dots,dn}\left | \left[K_{A,X}^T \LA(X + Y) K_{A,X}\right]_{ii} \right| \leq \eta \\
    &\phantom \quad \|Y\|_{\max} \leq \epsilon
\end{split}
\end{align}
where $\|\cdot\|_{\max} :\SM{g}{n} \to \R$ is defined as $\|W\|_{\max} = \underset{i,j,k}{\max} |W_{kij}|$ and $W_{kij}$ is the $i,j$ entry of the $k$th matrix in the tuple $W$.

Return $X^\eps= X + Y^\eps$.

\end{algo}

Note you may wish to check that  $L(X^\eps)$ is positive semidefinite,
to wit that all its eigenvalues 
are greater than a given tolerance. 
In algorithms where this is repeated in an inner iteration, checking positivity of the final answer may suffice.

In certain cases, such as when using dilations to compute the free Caratheodory expansion of a point, the extreme point $X$ on which we are applying Nullspace Purification may be of the form
\[
X = \begin{pmatrix}
X^0 & \beta \\
\beta^T & \gamma
\end{pmatrix}
\]
where $X^0 \in \SM{g}{n_0}$ is of particular importance. In such a case, it is often undesirable to perturb $X^0$ even slightly, and thus, we employ the following modified algorithm.

\begin{algo}[Frozen Nullspace Purification] \label{algo:nullspacepurificationfreeze}

We are given $\D_A$ be a bounded free spectrahedron and 
\[
X = \begin{pmatrix}
X^0 & \beta \\
\beta^T & \gamma
\end{pmatrix}
\]
on the (numerical) boundary 
 of $\D_A$, where $X^0 \in \SM{g}{n_0}$, $\beta \in M_{n_0\times s}(\R)^g$, $\gamma \in \SM{g}{s}$, and 
tolerances $\eps_{mag}, \eps_{gap}$ and $\epsilon > 0$.

Compute
$$
\dim \cK_{A,X}:= d n - \Delta(L_A(X),  \eps_{mag},\eps_{gap}) + 1
$$
and compute $\cK_{A,X}$, the corresponding nullspace of $L_A(X)$.
Recall that the nullspace of $\LA(X)$ is nontrivial since $X$ is 
a (numerical) boundary point.

Let $\eta_\epsilon \in \R$, $Y^{\epsilon} \in \SM{g}{n}$ be the solution to the linear program
\begin{align} \label{eq:np-program-freeze}
\begin{split}
    \eta_\epsilon, Y^\epsilon :=& \quad \underset{\eta\in\R \; Y\in\SM{g}{n}}{\mathrm{Minimizer}} \quad \eta \\
    \mathrm{s.t.}
    &\phantom \quad \max_{i = 1,2,\dots,dn}\left|\left[K_{A,X}^T \LA(X + Y) K_{A,X}\right]_{ii}\right| \leq \eta \\
    &\phantom \quad \|Y\|_{\max} \leq \epsilon \\
    &\phantom \quad Y = \begin{pmatrix}
    0_{n_0}^g & \tilde{\beta} \\
    \tilde{\beta}^T & \tilde{\gamma}
    \end{pmatrix}.
\end{split}
\end{align}

Return $X^\eps= X + Y^\eps$.

\end{algo}

\bs

\sssec{Observed Algorithm Properties}
While our implementation of Algorithms \ref{algo:nullspacepurification} and \ref{algo:nullspacepurificationfreeze} does not guarantee that the nullspace quality improves and that positivity is maintained, we ran extensive experiments that illustrate this happens in practice. In fact, when applying Nullspace Purification to an extreme candidate $X \in \cD_A$ we observe that 
\[
\Delta(L_A(X),  10^{-7},10^{-2}) = \Delta(L_A(X^\eps),  10^{-11},10^{-11}),
\]
illustrating that we are able to use much tighter nullspace tolerances after Nullspace Purification. The above equality represents a significant improvement in nullspace quality, as the magnitude tolerance $\epsilon_{mag}$ for extreme candidates generated using semidefinite programming alone can rarely be taken below $10^{-8}$. That is, for an extreme candidate $X$ that is the output of an SDP, one will frequently have
\[
\Delta(L_A(X),10^{-8},10^{-2}) = False.
\]

We note that there are many obvious alternatives to Algorithms \ref{algo:nullspacepurification} and \ref{algo:nullspacepurificationfreeze}. For example, one could minimize all entries of the matrix $K_{A,X}^T \LA(X + Y) K_{A,X}$. It is easier to provide theoretical guarantees for this alternative approach, which is appealing. However, we experimentally observed that our implementation typically notably outperformed the  variations that we tried.

\subsubsection{Using Nullspace Purification to improve \Cref{algo:pure-dilation-algo} and \Cref{algo:dilation-algorithm}.}

In order to increase the numerical accuracy of the extreme point candidates that we generate using \Cref{algo:pure-dilation-algo} and \Cref{algo:dilation-algorithm}, after each dilation step we applied Full Nullspace Purification to the dilated point. We are running two types of experiments. In the first, we use \Cref{algo:dilation-algorithm} to generate \mnae  points without much consideration for the actual points generated. In these experiments, we apply \Cref{algo:nullspacepurification}, i.e., Full Nullspace Purification, after each dilation step.

In the second set of experiments  we perform, the initial point $X^0$ plays an important role, as we search for a free Caratheodory expansion of that particular point using \Cref{algo:pure-dilation-algo}. Here, we instead use Frozen Nullspace Purification (freezing $X^0$) as in \Cref{algo:nullspacepurificationfreeze}.

As previously mentioned, it is in principle possible that a Nullspace Purification step could produce $\LA(X^\epsilon)$ that is not numerically positive semidefinite. In our experiments, we checked and found that every  final dilation point $X$ satisfied $\LA(X) + 10^{-11} I_{dn} \psd 0$.

The use of Nullspace Purification greatly decreases the rate of failure in our experiments. This is especially true for $g = 2$. It is amusing to note that when running all experiments described in \Cref{sec:experiments}  with identically tight tolerances, but without the use of Nullspace Purification, the test took approximately ten times longer to run, and none of the points succeeded in dilating to any form of extreme point. This is not a fair comparison as the accuracy of SDP is often quite low, but the mere fact that it is not a fair comparison shows the power of 
Nullspace Purification.

The next two sections describe experiments done using these algorithms and also give data describing their success rate and performance.

\section{Experimental behavior of  \mnae  points}
\label{sec:experiments}

We randomly  generated 
various free spectrahedra.
The goal was to see if ``randomly 
generated" boundary points $Y^0$ frequently or seldom dilated to \mnae  points, 
where we used \Cref{algo:dilation-algorithm}
as a tool.
The data produced is summarized in 3 tables and high level observations and speculations are in \Cref{section:conclusion}.
One consequence of this numerical study was that it guided discovery of the exact
$g=3$ example,
\Cref{sec:exactg3ex}.
Indeed this suggested using the parameters $d=4, n=3 $ and the actual  spectrahedron 
we used. 

\ssec{Data From Our Experiments: Guide to the Tables}
\label{sec:mnaGuide}

In our experiments, we consider three different parameters $g$, $d$, and $n_0$ where the defining tuple $A \in \SM{g}{d}$ and the initial point $Y^0$, discussed in \Cref{algo:dilation-algorithm}, is in $\SM{g}{n_0}.$ For every pair $g$ and $n_0$, we generated 10,000 extreme point candidates using \Cref{algo:dilation-algorithm} with Full Nullspace Purification as in \Cref{algo:nullspacepurification}. 
The defining tuples used were randomly generated irreducible tuples $A \in \SM{g}{d}$ where $d = g,g+1,g+2,g+3$. For each defining tuple, 25 points
were generated; 100 defining tuples were generated for every value of $d$ totaling 2,500 points for every $g$, $d$, $n_0$. Moreover, the generated spectrahedra were checked to ensure they were bounded.

Throughout \Cref{sec:experiments}, our experimental data will be presented in tables with the same format as in \Cref{table:g=2-table}. Importantly, the tables describe the properties of the final points generated by the algorithm, not any intermediates. Thus, in the language of \Cref{algo:dilation-algorithm}, if $Y^0 \in \D_A(n_0)$ is the initial boundary point and $Y^k \in \D_A(n_0+k)$ is the final point, only $Y^k$ and not $Y^j$ for any $0\leq j< k$ will be represented in the table. For each table, all of the generated points started at the same level $n_0$, 
and the $n$ given in column $3$ is the level that the points ended at after being dilated.

For any given $g$, $d$, and $n$,
\begin{enumerate}
    \item 
    $\#$MnotA,
    $\#$Euc, and $\#$Arv columns give how many of the points where \mnae , Euclidean extreme, or Arveson extreme point candidates respectively;
    \item
    ArvCT, MatCT column gives the \arvcnt and \extcnt in that order;
    \item
    $K_{A,X}$ columns count the number of points with kernel dimension $k_{A,X} = 1,2,3,4,5,$ and $>5$. The second number in this column is the list of all the dilation subspace dimensions of the initial points; 
    \item
    $\#$Fail column counts the number of points which failed to dilate to a Euclidean extreme point.
\end{enumerate}

In one case, a point was determined to be a \mnae  point candidate, but the kernel dimension was uncertain. This case is marked with a $1*.$

A summary of our conclusions from the experiments can be found in \Cref{section:conclusion}.

\subsection{Matrix extreme points of spectrahedra for $g=2$}\label{section:g=2}

We now narrow our focus to the case where $g = 2$. In this case, we conducted the experiment described above, generating 2,500 extreme point candidates for each $g$, $d$, $n_0$ where $d = 2,3,4,5$. Extreme point candidate generation for $g=2$ has a higher failure rate ($\sim0.7\%$ as opposed to the second highest $0.01\%$) than any other value of $g$ that we tested. 
This higher rate of failure is the result of a large number of failed dilation tries which results in  each dilation step
taking significantly longer with some timing out.

A common theme throughout all of our experiments is that we only find examples of \mnae  point candidates when the \extcnt is strictly less than the \arvcnt. That is, we only find \mnae  point candidates when the minimum kernel size of $\LA(X)$ necessary to be matrix extreme is strictly less than the minimum kernel size necessary to be Arveson extreme. We would expect this behavior if the matrix extreme and Arveson extreme equations were randomly generated, but they do have  some structure.\looseness=-1

We did, however, find parameter ranges where the \extcnt is strictly smaller than the \arvcnt, but no \mnae  point candidates were found. Stronger still, we find parameters, such as $g=2$, $d = 3$, $n = 5$, where we find Euclidean extreme points with kernel dimensions sufficiently large to be called matrix extreme, yet none of these points were determined to be matrix extreme candidates.

\begin{table}[h] 
\small
\resizebox{\columnwidth}{!}{%
\begin{tabular}{ |c|c|c|c|c|c|c|c|c|c|c|c|c|c| } 
\hline 
\multicolumn{14}{|c|}{\text{$g = 2$, Starting $n_0$ = 3,4,5}} \\ 
\hline 
\multirow{2}{*}{$g$} & \multirow{2}{*}{$d$} & \multirow{2}{*}{$n$} & \text{$ \#$Mat} & \multirow{2}{*}{\text{$ \#$Euc}} & \multirow{2}{*}{\text{$ \#$Arv}} & \text{ArvCT,} & \multicolumn{6}{c|}{$K_{A,X}$} & \multirow{2}{*}{\text{$ \#$Fail}}\\ 
\cline{8-13}  
 &  &  & \text{not Arv} &  &  & \text{MatCT} & 1 & 2 & 3 & 4 & 5 & \text{$>$5} & \\ 
\hline 
\multirow{28}{*}{2} & \multirow{7}{*}{2} & 3 & 0 & 0 & 0 & \text{3,3} & 0 & 0 & 0 & 0 & 0 & 0 & 0\\ 
\cline{3-14} 
 &  & 4 & 0 & 0 & 0 & \text{4,4} & 0 & 0 & 0 & 0 & 0 & 0 & 0\\ 
\cline{3-14} 
 &  & 5 & 0 & 2,500 & 2,500 & \text{5,5} & 0 & 0 & 0 & 0 & \text{2,500;4} & 0 & 0\\ 
\cline{3-14} 
 &  & 6 & 0 & 0 & 0 & \text{6,6} & 0 & 0 & \text{1;8} & 0 & \text{2;6} & 0 & 3\\ 
\cline{3-14} 
 &  & 7 & 0 & 2498 & 2498 & \text{7,6} & 0 & 0 & 0 & 0 & 0 & 2498 & 0\\ 
\cline{3-14} 
 &  & 8 & 0 & 0 & 0 & \text{8,7} & 0 & 0 & 0 & 0 & 0 & 2 & 2\\ 
\cline{3-14} 
 &  & 9 & 0 & 2497 & 2497 & \text{9,8} & 0 & 0 & 0 & 0 & 0 & 2497 & 0\\ 
\cline{2-14} 
 & \multirow{7}{*}{3} & 3 & 0 & 0 & 0 & \text{2,2} & \text{4;3} & 0 & 0 & 0 & 0 & 0 & 4\\ 
\cline{3-14} 
 &  & 4 & 0 & 1773 & 1746 & \text{3,3} & \text{9;5} & \text{27;3} & \text{1746;3} & 0 & 0 & 0 & 9\\ 
\cline{3-14} 
 &  & 5 & 0 & 680 & 432 & \text{4,3} & \text{18;7} & \text{14;5} & \text{248;3,5} & \text{432;3} & 0 & 0 & 32\\ 
\cline{3-14} 
 &  & 6 & 0 & 2239 & 2194 & \text{4,4} & 0 & \text{26;7} & \text{45;5,7} & \text{796;3,5} & \text{1398;3,5} & 0 & 26\\ 
\cline{3-14} 
 &  & 7 & 0 & 1511 & 1464 & \text{5,4} & 0 & 0 & \text{36;7} & \text{47;5,7} & \text{1464;5,7} & 0 & 36\\ 
\cline{3-14} 
 &  & 8 & 419 & 1061 & 626 & \text{6,5} & 0 & 0 & 0 & \text{16;7} & \text{419;5,7} & 626 & 0\\ 
\cline{3-14} 
 &  & 9 & 0 & 79 & 74 & \text{6,5} & 0 & 0 & 0 & 0 & \text{5;7} & 74 & 0\\ 
\cline{2-14} 
 & \multirow{7}{*}{4} & 3 & 0 & 6 & 0 & \text{2,2} & \text{6;2} & 0 & 0 & 0 & 0 & 0 & 0\\ 
\cline{3-14} 
 &  & 4 & 0 & 2494 & 2494 & \text{2,2} & \text{19;4} & \text{956;2} & \text{1537;2} & 0 & 0 & 0 & 19\\ 
\cline{3-14} 
 &  & 5 & 0 & 1816 & 1786 & \text{3,3} & \text{19;6} & \text{30;4} & \text{1786;4} & 0 & 0 & 0 & 19\\ 
\cline{3-14} 
 &  & 6 & 0 & 2409 & 2372 & \text{3,3} & 0 & \text{37;6} & \text{1998;4,6} & \text{374;4} & 0 & 0 & 0\\ 
\cline{3-14} 
 &  & 7 & 0 & 498 & 447 & \text{4,3} & 0 & 0 & \text{51;6} & \text{447;6} & 0 & 0 & 0\\ 
\cline{3-14} 
 &  & 8 & 0 & 239 & 239 & \text{4,4} & 0 & 0 & 0 & \text{119;6} & \text{120;6} & 0 & 0\\ 
\cline{3-14} 
 &  & 9 & 0 & 0 & 0 & \text{5,4} & 0 & 0 & 0 & 0 & 0 & 0 & 0\\ 
\cline{2-14} 
 & \multirow{7}{*}{5} & 3 & 0 & 519 & 0 & \text{2,2} & \text{519;1} & 0 & 0 & 0 & 0 & 0 & 0\\ 
\cline{3-14} 
 &  & 4 & 0 & 2006 & 1981 & \text{2,2} & \text{25;3} & \text{828;1} & \text{1153;1} & 0 & 0 & 0 & 0\\ 
\cline{3-14} 
 &  & 5 & 0 & 2475 & 2475 & \text{2,2} & \text{59;5} & \text{727;3} & \text{1748;3} & 0 & 0 & 0 & 59\\ 
\cline{3-14} 
 &  & 6 & 0 & 1823 & 1724 & \text{3,3} & 0 & \text{99;5} & \text{1724;5} & 0 & 0 & 0 & 0\\ 
\cline{3-14} 
 &  & 7 & 0 & 618 & 618 & \text{3,3} & 0 & 0 & \text{266;5} & \text{352;5} & 0 & 0 & 0\\ 
\cline{3-14} 
 &  & 8 & 0 & 0 & 0 & \text{4,3} & 0 & 0 & 0 & 0 & 0 & 0 & 0\\ 
\cline{3-14} 
 &  & 9 & 0 & 0 & 0 & \text{4,3} & 0 & 0 & 0 & 0 & 0 & 0 & 0\\ 
\hline 
\end{tabular}
} 
\normalsize
\caption{}
\label{table:g=2-table}
\end{table}

\ssec{Matrix Extreme Points of Spectrahedra for $g=3$}
\label{sec:g-3-dilation-experiment}

We conducted the experiments analogous to those in \Cref{section:g=2} for defining tuples $A\in \SM{3}{d}$ and $d=3,4,5,6.$ As in $g=2$, for parameters where the \extcnt is equal to the \arvcnt, we do not find any \mnae  point candidates. Aside from this, there are some notable distinctions between the $g=2$ and $g=3$ cases. 

Firstly, we can see that only three points failed to dilate to extreme for $g=3$ compared to 209 points for $g=2$. Due to the relative cleanness of the data, we can see that for a given $d$ and $n$, we usually find points with only one of two different kernel dimensions. The failed points in the $g=2$ data made it difficult for this phenomenon to be seen.

Secondly, when $g=3$ 
there is a much larger array of parameters at which we find \mnae  point candidates. With the exception of $g = 3$, $d = 4$, $n = 5$ (and higher values of $n$ which no points dilated to), for every $g$, $d$, and $n$ where the \extcnt is strictly less than the \arvcnt we find \mnae  point candidates. The $d = 4$, $n = 5$ exception may be due to experimental design as all the points that started at $n_0 = 2$ and $n_0 = 3$ only needed one dilation step to reach Arveson or matrix extreme, and all the points that started at $n_0 = 4$ needed at least $3$.

\begin{table}[h] 
\small
\resizebox{\columnwidth}{!}{%
\begin{tabular}{ |c|c|c|c|c|c|c|c|c|c|c|c|c|c| } 
\hline 
\multicolumn{14}{|c|}{\text{$g = 3$, Starting $n_0$ = 2,3,4}} \\ 
\hline 
\multirow{2}{*}{$g$} & \multirow{2}{*}{$d$} & \multirow{2}{*}{$n$} & \text{$ \#$Mat} & \multirow{2}{*}{\text{$ \#$Euc}} & \multirow{2}{*}{\text{$ \#$Arv}} & \text{ArvCT,} & \multicolumn{6}{c|}{$K_{A,X}$} & \multirow{2}{*}{\text{$ \#$Fail}}\\ 
\cline{8-13}  
 &  &  & \text{not Arv} &  &  & \text{MatCT} & 1 & 2 & 3 & 4 & 5 & \text{$>$5} & \\ 
\hline 
\multirow{28}{*}{3} & \multirow{7}{*}{3} & 2 & 0 & 0 & 0 & \text{2,2} & 0 & 0 & 0 & 0 & 0 & 0 & 0\\ 
\cline{3-14} 
 &  & 3 & 0 & 2,500 & 2,500 & \text{3,3} & 0 & 0 & \text{2,500;3} & 0 & 0 & 0 & 0\\ 
\cline{3-14} 
 &  & 4 & 0 & 1 & 0 & \text{4,4} & 0 & 0 & \text{1;6} & 0 & 0 & 0 & 0\\ 
\cline{3-14} 
 &  & 5 & 0 & 2499 & 2499 & \text{5,4} & 0 & 0 & 0 & 0 & \text{2499;6} & 0 & 0\\ 
\cline{3-14} 
 &  & 6 & 0 & 8 & 0 & \text{6,5} & 0 & 0 & 0 & 0 & \text{8;9} & 0 & 0\\ 
\cline{3-14} 
 &  & 7 & 0 & 2492 & 2492 & \text{7,6} & 0 & 0 & 0 & 0 & 0 & 2492 & 0\\ 
\cline{3-14} 
 &  & 8 & 0 & 0 & 0 & \text{8,6} & 0 & 0 & 0 & 0 & 0 & 0 & 0\\ 
\cline{2-14} 
 & \multirow{7}{*}{4} & 2 & 0 & 0 & 0 & \text{2,2} & 0 & 0 & 0 & 0 & 0 & 0 & 0\\ 
\cline{3-14} 
 &  & 3 & 261 & 2,500 & 2239 & \text{3,2} & 0 & \text{261;2} & \text{2239;2} & 0 & 0 & 0 & 0\\ 
\cline{3-14} 
 &  & 4 & 0 & 2256 & 2256 & \text{3,3} & 0 & 0 & \text{2256;5} & 0 & 0 & 0 & 0\\ 
\cline{3-14} 
 &  & 5 & 0 & 187 & 187 & \text{4,3} & 0 & 0 & 0 & \text{187;5} & 0 & 0 & 0\\ 
\cline{3-14} 
 &  & 6 & 299 & 2516 & 2217 & \text{5,4} & 0 & 0 & 0 & \text{299;5,8} & \text{2217;5,8} & 0 & 0\\ 
\cline{3-14} 
 &  & 7 & 34 & 34 & 0 & \text{6,4} & 0 & 0 & 0 & 0 & \text{34;8} & 0 & 0\\ 
\cline{3-14} 
 &  & 8 & 0 & 7 & 7 & \text{6,5} & 0 & 0 & 0 & 0 & 0 & 7 & 0\\ 
\cline{2-14} 
 & \multirow{7}{*}{5} & 2 & 0 & 119 & 0 & \text{2,2} & \text{119;1} & 0 & 0 & 0 & 0 & 0 & 0\\ 
\cline{3-14} 
 &  & 3 & 0 & 2381 & 2381 & \text{2,2} & 0 & \text{428;1} & \text{1953;1} & 0 & 0 & 0 & 0\\ 
\cline{3-14} 
 &  & 4 & 398 & 2,500 & 2102 & \text{3,2} & 0 & \text{397;4} & \text{2102;4} & 0 & 0 & 0 & 0\\ 
\cline{3-14} 
 &  & 5 & 0 & 2165 & 2163 & \text{3,3} & 0 & \text{2;7} & \text{2163;7} & 0 & 0 & 0 & 0\\ 
\cline{3-14} 
 &  & 6 & 64 & 335 & 271 & \text{4,3} & 0 & 0 & \text{64;7} & \text{271;7} & 0 & 0 & 0\\ 
\cline{3-14} 
 &  & 7 & 0 & 0 & 0 & \text{5,4} & 0 & 0 & 0 & 0 & 0 & 0 & 0\\ 
\cline{3-14} 
 &  & 8 & 0 & 0 & 0 & \text{5,4} & 0 & 0 & 0 & 0 & 0 & 0 & 0\\ 
\cline{2-14} 
 & \multirow{7}{*}{6} & 2 & 0 & 2,500 & 2,500 & \text{1,1} & \text{2,500;0} & 0 & 0 & 0 & 0 & 0 & 0\\ 
\cline{3-14} 
 &  & 3 & 0 & 0 & 0 & \text{2,2} & 0 & 0 & 0 & 0 & 0 & 0 & 0\\ 
\cline{3-14} 
 &  & 4 & 0 & 2,500 & 2,500 & \text{2,2} & \text{3;6} & \text{401;3} & \text{2099;3} & 0 & 0 & 0 & 3\\ 
\cline{3-14} 
 &  & 5 & 361 & 2497 & 2136 & \text{3,2} & 0 & \text{361;6} & \text{2136;6} & 0 & 0 & 0 & 0\\ 
\cline{3-14} 
 &  & 6 & 0 & 0 & 0 & \text{3,3} & 0 & 0 & 0 & 0 & 0 & 0 & 0\\ 
\cline{3-14} 
 &  & 7 & 0 & 0 & 0 & \text{4,3} & 0 & 0 & 0 & 0 & 0 & 0 & 0\\ 
\cline{3-14} 
 &  & 8 & 0 & 0 & 0 & \text{4,3} & 0 & 0 & 0 & 0 & 0 & 0 & 0\\ 
\hline 
\end{tabular}
} 
\normalsize
\caption{}
\label{table:g=3-table}
\end{table}

\newpage

\ssec{Matrix Extreme Points of Spectrahedra for $g=4$}
\label{sec:g-4-dilation-experiment}

We conducted the same experiment described in \Cref{section:g=2} for defining tuples $A\in \SM{4}{d}$ and $d=4,5,6,7.$

Similarly to the $g=3$ case, we find that the only \mnae  points that we find occur when the \arvcnt is strictly larger than the \extcnt. Moreover, at a majority of parameters where we do find the \extcnt to be strictly smaller than the \arvcnt, we do find \mnae points.

The \extcnt for $g=4$, $d=7$ and $n=2$ is equal to $1$ which is unusual. There is a theorem of Helton, Klep and Vol\v ci\v c
\cite[Section 7]{HKV18} that says that kernel $1$ boundary points form an open dense subset of the boundary of irreducible free spectrahedra.
The \arvcnt is strictly greater than $1$, so they cannot be Arveson. This is evidence for the existence of \mnae  points for this parameter range, and indeed we find many.

\begin{table}[h] 
\small
\resizebox{\columnwidth}{!}{%
\begin{tabular}{ |c|c|c|c|c|c|c|c|c|c|c|c|c|c| } 
\hline 
\multicolumn{14}{|c|}{\text{$g = 4$, Starting $n_0$ = 2,3}} \\ 
\hline 
\multirow{2}{*}{$g$} & \multirow{2}{*}{$d$} & \multirow{2}{*}{$n$} & \text{$ \#$Mat} & \multirow{2}{*}{\text{$ \#$Euc}} & \multirow{2}{*}{\text{$ \#$Arv}} & \text{ArvCT,} & \multicolumn{6}{c|}{$K_{A,X}$} & \multirow{2}{*}{\text{$ \#$Fail}}\\ 
\cline{8-13}  
 &  &  & \text{not Arv} &  &  & \text{MatCT} & 1 & 2 & 3 & 4 & 5 & \text{$>$5} & \\ 
\hline 
\multirow{24}{*}{4} & \multirow{6}{*}{4} & 2 & 0 & 0 & 0 & \text{2,2} & 0 & 0 & 0 & 0 & 0 & 0 & 0\\ 
\cline{3-14} 
 &  & 3 & 0 & 2,500 & 2,500 & \text{3,3} & 0 & 0 & \text{2,500;4} & 0 & 0 & 0 & 0\\ 
\cline{3-14} 
 &  & 4 & 0 & 0 & 0 & \text{4,4} & 0 & 0 & 0 & 0 & 0 & 0 & 0\\ 
\cline{3-14} 
 &  & 5 & 0 & 2,500 & 2,500 & \text{5,4} & 0 & 0 & 0 & 0 & \text{2,500;8} & 0 & 0\\ 
\cline{3-14} 
 &  & 6 & 0 & 0 & 0 & \text{6,5} & 0 & 0 & 0 & 0 & 0 & 0 & 0\\ 
\cline{3-14} 
 &  & 7 & 0 & 0 & 0 & \text{7,5} & 0 & 0 & 0 & 0 & 0 & 0 & 0\\ 
\cline{2-14} 
 & \multirow{6}{*}{5} & 2 & 0 & 0 & 0 & \text{2,2} & 0 & 0 & 0 & 0 & 0 & 0 & 0\\ 
\cline{3-14} 
 &  & 3 & 104 & 2,500 & 2396 & \text{3,2} & 0 & \text{104;3} & \text{2396;3} & 0 & 0 & 0 & 0\\ 
\cline{3-14} 
 &  & 4 & 2424 & 2424 & 0 & \text{4,3} & 0 & 0 & \text{2424;7} & 0 & 0 & 0 & 0\\ 
\cline{3-14} 
 &  & 5 & 0 & 70 & 70 & \text{4,3} & 0 & 0 & 0 & \text{70;7} & 0 & 0 & 0\\ 
\cline{3-14} 
 &  & 6 & 1 & 6 & 5 & \text{5,4} & 0 & 0 & 0 & \text{1;7} & \text{5;7} & 0 & 0\\ 
\cline{3-14} 
 &  & 7 & 0 & 0 & 0 & \text{6,4} & 0 & 0 & 0 & 0 & 0 & 0 & 0\\ 
\cline{2-14} 
 & \multirow{6}{*}{6} & 2 & 0 & 0 & 0 & \text{2,2} & 0 & 0 & 0 & 0 & 0 & 0 & 0\\ 
\cline{3-14} 
 &  & 3 & 0 & 2,500 & 2,500 & \text{2,2} & 0 & \text{179;2} & \text{2321;2} & 0 & 0 & 0 & 0\\ 
\cline{3-14} 
 &  & 4 & 0 & 2380 & 2380 & \text{3,3} & 0 & 0 & \text{2380;6} & 0 & 0 & 0 & 0\\ 
\cline{3-14} 
 &  & 5 & 16 & 120 & 104 & \text{4,3} & 0 & 0 & \text{16;6} & \text{104;6} & 0 & 0 & 0\\ 
\cline{3-14} 
 &  & 6 & 0 & 0 & 0 & \text{4,3} & 0 & 0 & 0 & 0 & 0 & 0 & 0\\ 
\cline{3-14} 
 &  & 7 & 0 & 0 & 0 & \text{5,4} & 0 & 0 & 0 & 0 & 0 & 0 & 0\\ 
\cline{2-14} 
 & \multirow{6}{*}{7} & 2 & 2,500 & 2,500 & 0 & \text{2,1} & \text{2,500;1} & 0 & 0 & 0 & 0 & 0 & 0\\ 
\cline{3-14} 
 &  & 3 & 0 & 0 & 0 & \text{2,2} & 0 & 0 & 0 & 0 & 0 & 0 & 0\\ 
\cline{3-14} 
 &  & 4 & 188 & 2,500 & 2312 & \text{3,2} & 0 & \text{188;5} & \text{2311;5} & 0 & 0 & 0 & 0\\ 
\cline{3-14} 
 &  & 5 & 0 & 0 & 0 & \text{3,3} & 0 & 0 & 0 & 0 & 0 & 0 & 0\\ 
\cline{3-14} 
 &  & 6 & 0 & 0 & 0 & \text{4,3} & 0 & 0 & 0 & 0 & 0 & 0 & 0\\ 
\cline{3-14} 
 &  & 7 & 0 & 0 & 0 & \text{4,3} & 0 & 0 & 0 & 0 & 0 & 0 & 0\\ 
\hline 
\end{tabular} 
}
\normalsize
\caption{}
\label{table:g=4-table}
\end{table}

\ssec{Dilating matrix extreme points to Arveson extreme points}
\label{sec:uptoArv}

In our experiments, we produced a modest number of \mnae  points. Here we check to see if the small perturbations of these points,
 resulting from Nullspace Purification, 
dilate to Arveson extreme points. By small perturbation, $\widehat{X}$ of $X$ we mean
\[
\|X - \widehat{X}\|_{\max} \leq dilDim \cdot 10^{-7}
\]
where the norm $\|\cdot\|_{\max}$ is the entry wise maximum as in \Cref{algo:nullspacepurification}.

\bs

\sssec{$g=2$} Of the 438 \mnae  points that were generated in our experiments for $g=2$, 81 failed to dilate to Arveson extreme. Of the points that dilated to Arveson, all but eight dilated to Arveson extreme in one step. Those eight points dilated to Arveson in two steps. Out of the original 30,000 points generated in our $g=2$ experiments, a total of 1,445 points failed to dilate to Arveson extreme.

\sssec{$g=3$} Of the 1417 \mnae  points that were generated in our experiments for $g=3$, 26 failed to dilate to Arveson extreme. Of the points that dilated to Arveson, all but 88 dilated to Arveson extreme in one step. The 88 remaining points dilated in two steps.  Out of the original 30,000 points generated in our $g=3$ experiments, a total of 159 points failed to dilate to Arveson extreme.

\sssec{$g=4$} Of the 5,233 \mnae extreme points for $g=4$, all but 332 dilated to Arveson extreme. All but two points that successfully dilated to Arveson extreme dilated in one step. The remaining two dilated to Arveson extreme in two steps. Out of the original 20,000 points generated in our $g=4$ experiments, a total of 332 points failed to dilate to Arveson extreme.

\ssec{Summary of experimental findings}

Our experiments  explored $g=2,3,4$ with $d$ ranging from $d = g$ to $d = g + 3$ with $n$ varying but never exceeding 8.
We primarily focused on these parameter ranges both as a way of supporting the process of constructing exact examples and because for larger values of $g$, the dilation steps take significantly longer to solve.

From the data we can make a number of empirical  observations
which wild optimism extends to the forthcoming speculation.

Begin by defining  the \df{strict count parameters} $g,d,n$ to be such that the \arvcnt is strictly larger than the \extcnt:
$$
\left\lceil \frac{gn}{d} \right\rceil > \left\lceil \frac{(n+1)(g+1)}{2d} - \frac{1}{nd} \right\rceil
$$
        
        \begin{spec} Consider a generic element $A$ of $SM_d(\R)^g$ for which $\cD_A$ is bounded. 
            For   $g>2$ 
                and $d > g$ and at each set of strict count parameters, there exist \mnae  points. 
        In the converse direction, for
        all $g$ the  \mnae  points can only occur at strict count parameters.\footnote{We would expect this behavior if the equations were randomly generated; however, the systems have structure.} \qed 
                       \end{spec}
  
       \noindent \textbf{Evidence for Speculation.}
        For $g=2,3,4$ we only find \mnae  points at strict count parameters.
        
        Conversely, for $g = 3$ in seven of the eight strict count parameters where we found points, we found \mnae  points; six were found in the experiments described in \Cref{sec:g-3-dilation-experiment} and one was found in the experiments described in \Cref{sec:free-carath} where the algorithm used Frozen 
        Nullspace Purification, freezing $Y^0$ as opposed to the unfrozen $Y^0$ used in this section. 
        For $g = 4$ the same was true for six of seven strict count parameters. These points were all found in the experiment described in \Cref{sec:g-4-dilation-experiment}.     
        \qed
        
        {\it When $g$ equals $d$ we do not find any \mnae  points.}
        In this case, we see that for $g=2,3,4$ all of the points generated at $d=g$ that dilated to Arveson extreme ended at the exact same level $n$. We do not see this phenomenon happening in general for $d > g$. One could speculate that this holds whenever $g=d$; however, we are cautious. For example 
        the relationship between $g,d$ and the boundedness of $\cD_A$ changes when $g$ becomes large. In particular, $d$ can be much smaller than $g$ while still resulting in a bounded $\cD_A$ if $g$ is large. In contrast, if $g=2$ or $3$ and $d<g$, then $\cD_A$ is not bounded.

        Now we give more detail on the experiment outcomes.
        \ben
       
        \item For $g = 2,3,4$, we do find values of $d$ and $n$ where there are \mnae  points.

        \item \textbf{Strict count parameters where we do not find \mnae  points}
        \begin{enumerate}
            \item $g = 2$
            \begin{enumerate}
                \item $d = 3$: $n = 5,7,9$
                \item $d = 4$: $n = 7$
            \end{enumerate}
            
            \item $g = 3$
            \begin{enumerate}
                \item $d = 4$: $n = 5$
            \end{enumerate}
            
            \item $g = 4$
            \begin{enumerate}
                \item $d = 5$: $n = 5$ (only 70 points)
            \end{enumerate}
        \end{enumerate}
        
        \item \textbf{\mnae  points} are only observed at
        \begin{enumerate}
            \item $g=2$
            \ben
                \item $d=3$: $n=8$. 
            \een
            \item $g=3$
            \ben
                \item $d=4$: $n=3,6,7$
                
                \item $d = 4$: $n = 8$ (found in \Cref{sec:free-carath} experiments)
                \item $d=5$: $n=4,6$
                \item $d=6$: $n=5$
            \een
            \item $g=4$
            \ben
                \item $d=5$: $n=3,4,6$
                \item $d=6$:  $n=5$  
                \item $d=7$: $n=2,4$
            \een
                
        \end{enumerate}
        
        \item 
        For $g = 2$, individual tries to make a dilation step  had a high rate of failure. This causes our experiments to
        produce fewer matrix extreme and Arveson extreme points per fixed number of start points.
        
    \een

Aside from these main observations, the following are also of interest.

\begin{enumerate}
    \item \textbf{Starting points.}
    In each of our experiments we produce a point at level $n$ (over which we have no control except for the starting level). Bounds on $n$ are:
    \ben 
    \item 
    for $g=2$ the largest $n$ we found was 9. Here, the starting size was 5; 
    \item
    for $g=3$ the largest $n$ we found was 8. Here, the starting size was 4; 
    \item 
    for $g=4$ the largest $n$ we found was 6. Here, the starting size was 3.
    \een
    
    \item
    {\bf Dilation Dimension.}
    Given $g,d,n$.
    For randomly generated free $g,d$-spectrahedron
    and every random  boundary point  of size $n$
    we found that the dilation dimension is $gn -d$.
    This is what we would expect from a randomly generated linear equation of the same size as
    the Arveson \Cref{eq:arv}.
    However, the Arveson equations have structure involving several random 
    parts, so they are not truly random.
    
\end{enumerate}

\section{Behavior of the Free Caratheodory Expansion Algorithm}
\label{sec:free-carath}

As discussed in \Cref{sec:FreeExtSignificance} a classical Caratheodory expansion
of a point in a convex set in terms of its extreme points
for free convex sets is equivalent to a dilation to an Arveson extreme point. An algorithm for such dilation was presented in \Cref{sec:algorithms}.
Here we focus on extremal dilations of an interior point $Y^0$.
We shall see that \Cref{algo:pure-dilation-algo} using Frozen Nullspace Purification, as in \Cref{algo:nullspacepurificationfreeze}, has a failure rate of 
$< 1$\% for $g = 3,4;$
also the number of dilation steps required is on
average small (e.g. 20 \%) relative to the theoretical 
maximum number of steps required.
We make the informal point that 
Nullspace Purification considerably improves  the performance of our Algorithm for creating dilations.

We did a few experiments on dilating boundary points
and found \Cref{algo:pure-dilation-algo} using  Nullspace Purification with and without Freezing 
as in \Cref{algo:nullspacepurificationfreeze} and 
\Cref{algo:nullspacepurification}. As seen in Section \ref{sec:experiments}, dilating boundary points using Full Nullspace purification has a high ($>99\%$) success rate. In this section we will see that Frozen Nullspace Purification, perhaps unsurprisingly, has a notably lower success rate $(>84\%)$ than Full Nullspace Purification when applied to boundary points. 

While a lower success rate for Frozen Nullspace Purification can be problematic where dilating the {\it exact original tuple $X$}, in a large number of applied/numerical settings the tuple $X$ in question is already a numerical approximation to an underlying ``true" tuple of interest. Further perturbing this numerical approximation by a small amount as in Full Nullspace Purification is unlikely to have a significant impact on the reliability of results. In fact, if one believes that the ``true" tuple which underlies the numerical approximation $X$ is a boundary point, then purifying the nullspace of $L_A(X)$ can be viewed as a denoising step which increases the numerical reliability of the original numerical approximation. Thus, the high success rate of Full Nullspace Purification can be enjoyed in many applied/numerical settings.

\ssec{Behavior of the Dilation Algorithm: Guide to the Tables}
\label{sec:dilGuide}

In our experiments, we consider three different parameters $g$, $d$, and $n_0$ where the defining tuple $A \in \SM{g}{d}$ and the initial point $Y^0 \in \D_A(n_0) \subset \SM{g}{n_0}.$
For each $d = g$, $g+1$, $g+2$, $g+3$, one hundred randomly generated irreducible tuples $A \in \SM{g}{d}$ were used after verifying the boundedness of $\cD_A$. For each defining tuple, 25 initial points
were randomly generated, totaling 2,500 points for every triple $g,d,n_0$.

Our data is summarized in 3 tables and corresponding 3D histograms,
one for $g=2$, for $g=3$ and for $g=4$.
To illustrate 3D plots  from a different viewpoint we also show (in \Cref{fig:g2HistGrid}) twelve cross sections of 
the first one ($g=2$).

Our goal is to determine success
rates for the algorithm in dilating to Arveson or to matrix extreme as well as the size $n$ of the dilation.

\begin{enumerate}
    \item 
    Min (resp. Max, Mean) means the minimum (resp. maximum, mean) number of dilation steps, $n-n_0$, observed in our experiment.
    \item
    We focus on $n - n_0$ as a fraction of the dilation subspace dimension, hence define
\[
\mu(n) := \frac{n - n_0}{g n_0} = \frac{n - n_0}{dilDim}.
\]
Mean $\mu$ (resp. Std. Dev. $\mu$) describes the mean (resp. standard deviation) of $\mu$ for each $g,d$. This is our main measure of the size $n$ of the dilation we obtain.
    \item
    Fail means a failure to dilate to Arveson or matrix extreme. Fails are not counted in the statistics (such as the means, standard deviations, etc.).
    \item
    \Mnae  means the point dilated to matrix extreme, but failed to dilate to Arveson; \mnae  points are not counted in the statistics (such as the means, standard deviations, etc.).
\end{enumerate}

\newpage

\begin{table}[h!]
\begin{tabular}{ |c|c|c|c|c|c|c|c|c|c| } 
\hline 
\multicolumn{10}{|c|}{\text{Interior Point Dilation Statistics}} \\ 
\hline 
\multirow{2}{*}{$g$} & \multirow{2}{*}{$d$} & \multirow{2}{*}{$n_ 0$} & \multicolumn{3}{c|}{$n-n_ 0$} & \multirow{2}{*}{\text{Mean $\mu $}} & \text{Std. Dev.} & \multirow{2}{*}{\text{$ \#$Fail}} & \multirow{2}{*}{\text{$ \#$MnotA}}\\ 
\cline{4-6}  
 &  &  & \text{Min} & \text{Max} & \text{Mean} &  & $\mu$ &  & \\ 
\hline 
\multirow{12}{*}{2} & \multirow{3}{*}{2} & 3 & 3 & 3 & 3. & 0.5 & 0. & 50 & 0\\ 
\cline{3-10} 
 &  & 4 & 4 & 4 & 4. & 0.5 & 0. & 4 & 0\\ 
\cline{3-10} 
 &  & 5 & 5 & 5 & 5. & 0.5 & 0. & 1 & 0\\ 
\cline{2-10} 
 & \multirow{3}{*}{3} & 3 & 2 & 6 & 2.652 & 0.442 & 0.131 & 206 & 16\\ 
\cline{3-10} 
 &  & 4 & 2 & 8 & 2.82 & 0.352 & 0.132 & 141 & 68\\ 
\cline{3-10} 
 &  & 5 & 3 & 7 & 3.81 & 0.381 & 0.1 & 161 & 169\\ 
\cline{2-10} 
 & \multirow{3}{*}{4} & 3 & 1 & 3 & 1.658 & 0.276 & 0.133 & 82 & 0\\ 
\cline{3-10} 
 &  & 4 & 2 & 4 & 2.351 & 0.294 & 0.085 & 89 & 0\\ 
\cline{3-10} 
 &  & 5 & 2 & 5 & 2.667 & 0.267 & 0.08 & 241 & 28\\ 
\cline{2-10} 
 & \multirow{3}{*}{5} & 3 & 1 & 2 & 1.457 & 0.243 & 0.083 & 60 & 0\\ 
\cline{3-10} 
 &  & 4 & 1 & 3 & 1.602 & 0.2 & 0.097 & 128 & 0\\ 
\cline{3-10} 
 &  & 5 & 2 & 4 & 2.189 & 0.219 & 0.048 & 287 & 70\\ 
\hline 
\end{tabular} 
\caption{$g=2$. 
Min (resp. Max) means the minimum (resp. maximum) number of dilation steps.
Fail means fail to dilate to Arveson or \mnae . 
MnotA means the point dilated 
to
matrix extreme, but not to Arveson. Recall a matrix extreme point is irreducible, so if it is Arveson, then it is free extreme.
}
\end{table}

\begin{figure}[h]
    \centering
{
    \centering
    \includegraphics[width =
    0.40\textwidth]{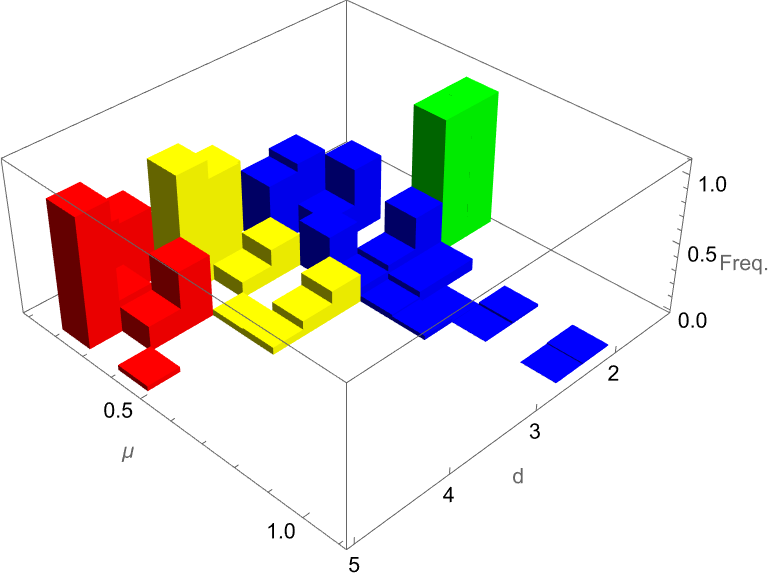}
}
\hspace{0.02\textwidth}
{
    \centering
    \includegraphics[width =
    0.46\textwidth]{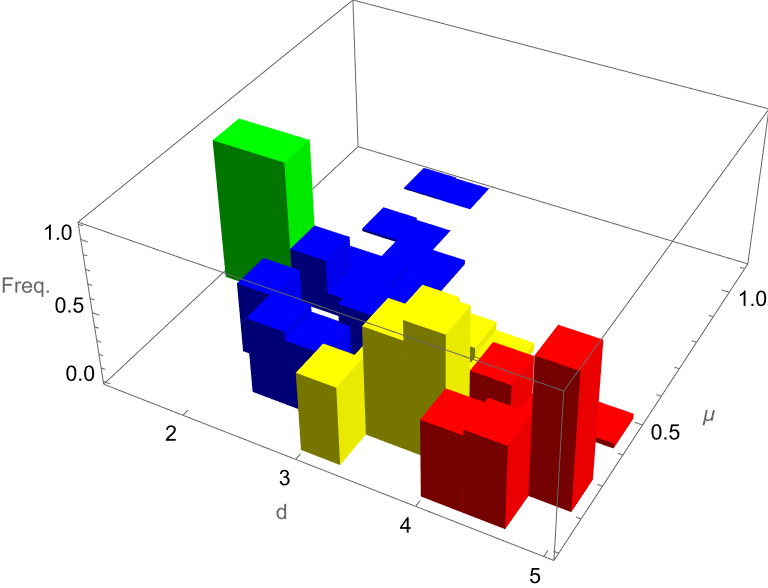}
} \\
\vspace{8pt}
{
    \centering
    \includegraphics[width =
    1\textwidth]{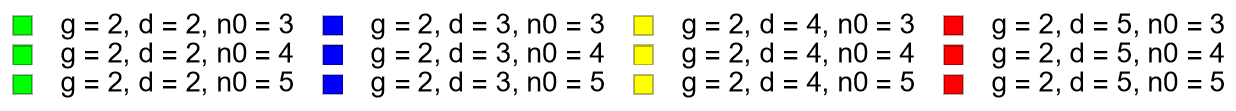}
}
    \caption{$g=2$. Histogram of $\mu$ data, viewed from two different angles.  The plot indicates  frequency of $(\mu,d)$ occurrences.
        }
    \label{fig:g2Hist90Off}
\end{figure}

Perspective on this type of 3D histograms is given by viewing cross sections. We illustrate with one cross section next. Then we show 12 of them.

\begin{figure}[h]
    \centering
    \includegraphics[scale=1.5]{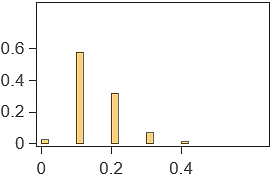}
    \caption{This is a cross section of the 3D histogram for $g=2$ given in \Cref{fig:g2Hist90Off}. In particular, this histogram shows the cross section for $d=2$ and $n_0 = 5.$ The horizontal axis is $0< \mu < 1$ and the vertical axis is the frequency with which a value of $\mu$
    was observed.
    See
    \Cref{fig:g2HistGrid} for the remaining cross sections.
    }
    \label{fig:g2HistGridEntry}
\end{figure}

\begin{figure}[h]
    \centering
    \includegraphics[width=12cm]{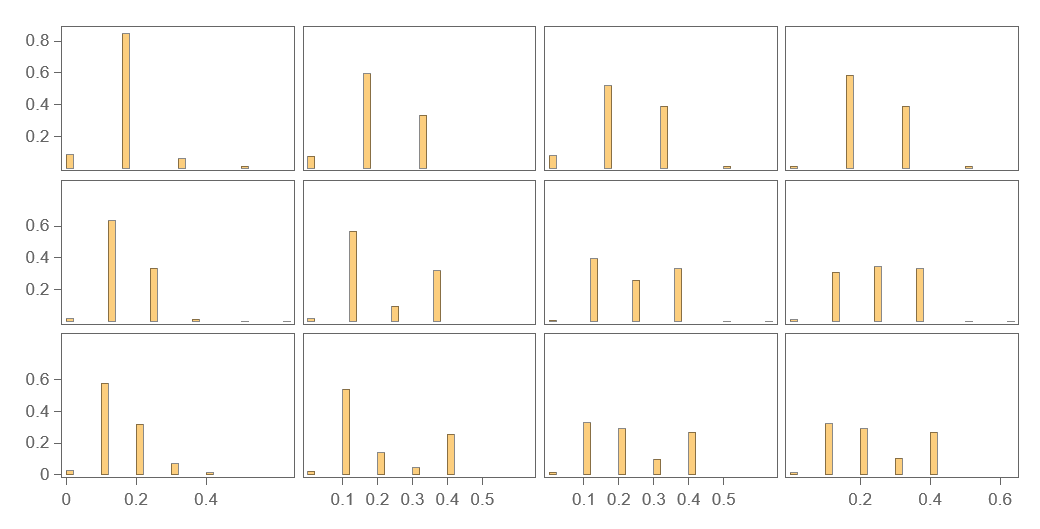}
    \caption{$g=2$. This figure shows a grid of histograms. 
    These are  cross sections of the 3D histogram
    \Cref{fig:g2Hist90Off}.
    The rows correspond to $n_0$, low $n_0=2$ at the
    top thru $n_0=4$ at the bottom, and the columns are $d=2 
    $ on the left
    thru $d=5$ on the right.
    }
    \label{fig:g2HistGrid}
\end{figure}

\bs 
\bs 

$$\mbox{}
$$

\begin{table} [h]
\begin{tabular}{ |c|c|c|c|c|c|c|c|c|c| } 
\hline 
\multicolumn{10}{|c|}{\text{Interior Point Dilation Statistics}} \\ 
\hline 
\multirow{2}{*}{$g$} & \multirow{2}{*}{$d$} & \multirow{2}{*}{$n_ 0$} & \multicolumn{3}{c|}{$n-n_ 0$} & \multirow{2}{*}{\text{Mean $\mu $}} & \text{Std. Dev.} & \multirow{2}{*}{\text{$ \#$Fail}} & \multirow{2}{*}{\text{$ \#$MnotA}}\\ 
\cline{4-6}  
 &  &  & \text{Min} & \text{Max} & \text{Mean} &  & $\mu$ &  & \\ 
\hline 
\multirow{12}{*}{3} & \multirow{3}{*}{3} & 2 & 2 & 2 & 2. & 0.333 & 0. & 6 & 0\\ 
\cline{3-10} 
 &  & 3 & 3 & 3 & 3. & 0.333 & 0. & 5 & 0\\ 
\cline{3-10} 
 &  & 4 & 4 & 4 & 4. & 0.333 & 0. & 9 & 0\\ 
\cline{2-10} 
 & \multirow{3}{*}{4} & 2 & 2 & 5 & 2.047 & 0.341 & 0.041 & 2 & 69\\ 
\cline{3-10} 
 &  & 3 & 2 & 9 & 2.306 & 0.256 & 0.083 & 3 & 15\\ 
\cline{3-10} 
 &  & 4 & 3 & 7 & 3.241 & 0.27 & 0.04 & 1 & 36\\ 
\cline{2-10} 
 & \multirow{3}{*}{5} & 2 & 1 & 3 & 1.328 & 0.221 & 0.101 & 1 & 0\\ 
\cline{3-10} 
 &  & 3 & 2 & 5 & 2.066 & 0.23 & 0.033 & 8 & 0\\ 
\cline{3-10} 
 &  & 4 & 2 & 6 & 2.334 & 0.195 & 0.052 & 7 & 44\\ 
\cline{2-10} 
 & \multirow{3}{*}{6} & 2 & 1 & 2 & 1.26 & 0.21 & 0.073 & 3 & 0\\ 
\cline{3-10} 
 &  & 3 & 1 & 3 & 1.279 & 0.142 & 0.063 & 2 & 1\\ 
\cline{3-10} 
 &  & 4 & 2 & 4 & 2.069 & 0.172 & 0.024 & 10 & 10\\ 
\hline 
\end{tabular} 
\normalsize
\caption{$g=3$. \  
}
\label{table:g3 carath}
\end{table}

\begin{figure*}[h] \label{hist:g3}
\centering
{
    \centering
    \includegraphics[width =
    0.46\textwidth]{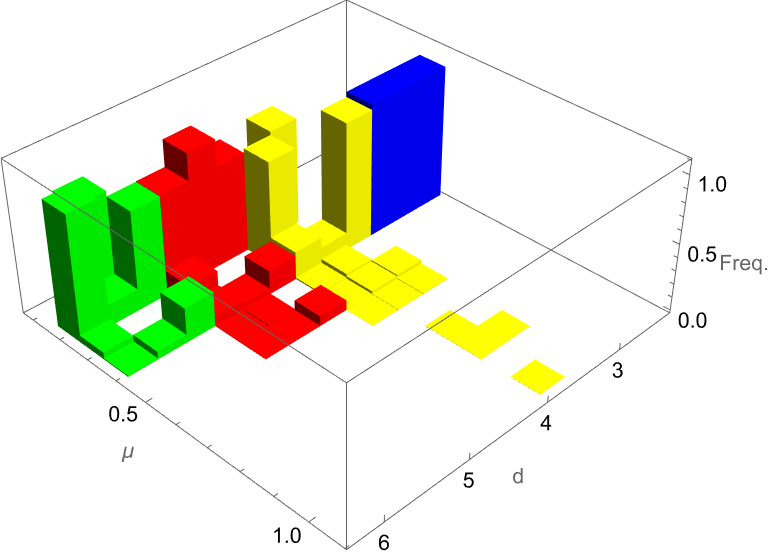}
}
\hspace{0.02\textwidth}
{
    \centering
    \includegraphics[width =
    0.46\textwidth]{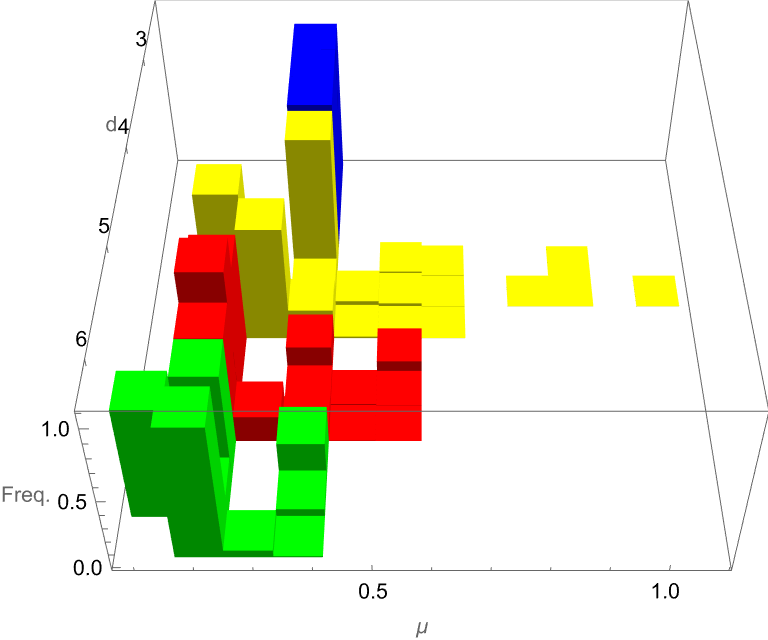}
} \\
\vspace{8pt}
{
    \centering
    \includegraphics[width =
    1\textwidth]{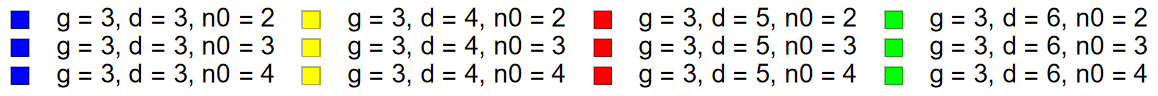}
}
\captionsetup{width=0.98\textwidth}
\caption{
$g=3$. Histogram of $\mu$ data, viewed from two angles.
}
\end{figure*}

\bs
\bs 


\begin{table} [h]
\begin{tabular}{ |c|c|c|c|c|c|c|c|c|c| } 
\hline 
\multicolumn{10}{|c|}{\text{Interior Point Dilation Statistics}} \\ 
\hline 
\multirow{2}{*}{$g$} & \multirow{2}{*}{${d}$} & \multirow{2}{*}{$n_ 0$} & \multicolumn{3}{c|}{$n-n_ 0$} & \multirow{2}{*}{\text{Mean $\mu $}} & \text{Std. Dev.} & \multirow{2}{*}{\text{$ \#$Fail}} & \multirow{2}{*}{\text{$ \#$MnotA}}\\ 
\cline{4-6}  
 &  &  & \text{Min} & \text{Max} & \text{Mean} &  & $\mu$ &  & \\ 
\hline 
\multirow{8}{*}{4} & \multirow{2}{*}{4} & 2 & 2 & 2 & 2. & 0.25 & 0. & 8 & 0\\ 
\cline{3-10} 
 &  & 3 & 3 & 3 & 3. & 0.25 & 0. & 1 & 0\\ 
\cline{2-10} 
 & \multirow{2}{*}{5} & 2 & 2 & 4 & 2.088 & 0.261 & 0.036 & 1 & 17\\ 
\cline{3-10} 
 &  & 3 & 2 & 5 & 2.061 & 0.172 & 0.022 & 0 & 0\\ 
\cline{2-10} 
 & \multirow{2}{*}{6} & 2 & 1 & 4 & 1.11 & 0.139 & 0.045 & 0 & 0\\ 
\cline{3-10} 
 &  & 3 & 2 & 5 & 2.127 & 0.177 & 0.028 & 2 & 3\\ 
\cline{2-10} 
 & \multirow{2}{*}{7} & 2 & 1 & 3 & 1.137 & 0.142 & 0.05 & 1 & 0\\ 
\cline{3-10} 
 &  & 3 & 2 & 4 & 2.014 & 0.168 & 0.01 & 2 & 5\\ 
\hline 
\end{tabular} 
\caption{$g=4$.
}
\label{ta:g4carath}
\end{table}


\begin{figure}[h]
    \centering
{
    \centering
    \includegraphics[width =
    0.46\textwidth]{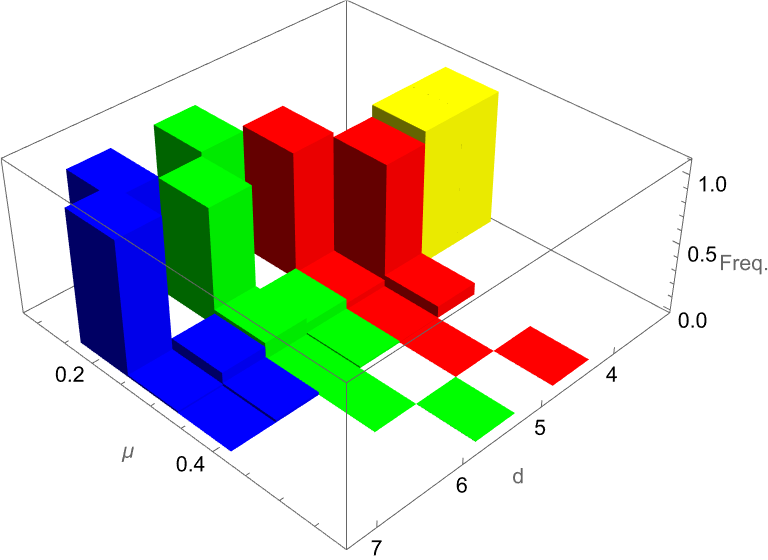}
}
\hspace{0.02\textwidth}
{
    \centering
    \includegraphics[width =
    0.46\textwidth]{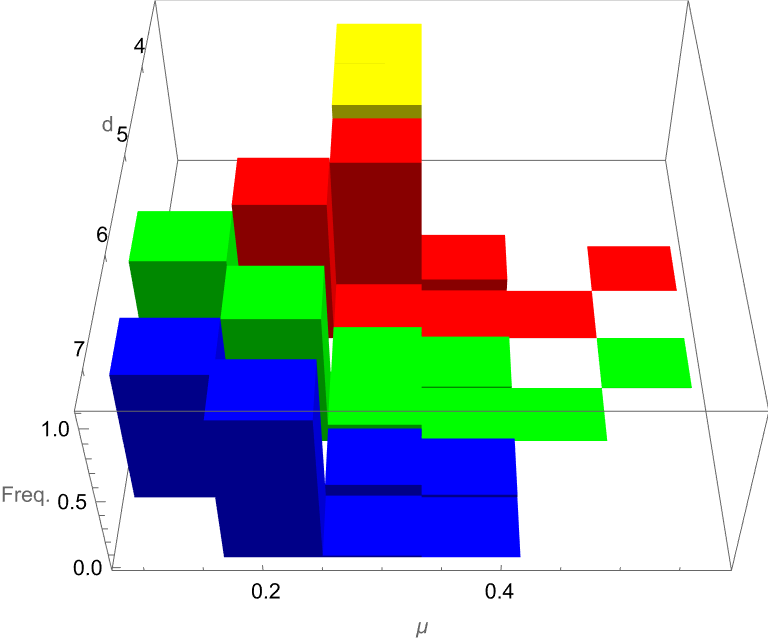}
} \\
\vspace{8pt}
{
    \centering
    \includegraphics[width =
    1\textwidth]{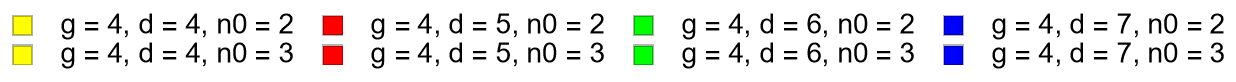}
}
\captionsetup{width=0.98\textwidth}
    \caption{$g=4$. Histogram of $\mu$ data. }
    \label{fig:g4Hist90}
\end{figure}


 \newpage
 \mbox{}
 \newpage

\eject

\ssec{Summary of Findings with Freezing}

\sssec{Success Rate of Our Algorithm}

Our success rate in dilating to Arveson or matrix extreme is summarized below.

\begin{enumerate}
    \item $g = 2$
    \begin{enumerate}
        \item Interior Point Failure Rate: $\sim 4.8\%$.
        
        \item Boundary Point Failure Rate: $\sim 15.4\%$.\\
              Here  $\sim 8.3\%$ of all initial $Y^0$ have dilation dimension $=1$ and \\
              \mbox{} \quad \ \ \  $\sim 15.7\%$ of those  initializations  $Y^0$ which fail. 
    \end{enumerate}
    \item $g = 3$
    \begin{enumerate}
        \item Interior Point Failure Rate: $\sim 0.2\%$
        \item Boundary Point Failure Rate: $\sim 6.4\%$.\\
              Here  $\sim 8.3\%$ of all initial $Y^0$ have dilation dimension $=1$ and \\
              \mbox \quad \ \ \ \
              $\sim 6.8\%$ 
of those  initializations  $Y^0$ which fail. 
    \end{enumerate}
    \item $g = 4$
    \begin{enumerate}
        \item Interior Point Failure Rate: $\sim 0.08\%$
        \item Boundary Point Failure Rate: $\sim 3.9\%$.\\\\
              Here $\sim 8.3\%$ of all initial $Y^0$ have dilation dimension $=1$ and \\
              \mbox{} \quad \ \ \
              $\sim 0\%$ 
              of those  initializations  $Y^0$ which fail. 
    \end{enumerate}
\end{enumerate}

\sssec{Size of dilations are usually small}
\mbox{} 

\ben
\item
 $Min(n-n_0)
 \leq  Mean(n-n_0) < Min(n-n_0)+ \eta_g
$\\
where $\eta_g<1$ for all $g,d,n_0$
and $\eta_g$ gets smaller as $g$ increases.

\item

As $g$ increases the gap between Max and Min decreases.

\item
The worst case that is theoretically possible $\mu$ is $1$, but we uniformly find better behavior.
For example, when $d - g > 1$ we see the following trend.
\begin{enumerate}
    \item
    For $g = 2$, the mean of $\mu$ is less than $0.3$;
    \item
    For $g = 3$, the mean of $\mu$ is less than $0.23$;
    \item
    For $g = 4$, the mean of $\mu$ is less than $0.18$.
\end{enumerate}
\een

\ssec{Size of free extreme dilations and crude 
insight into to $\mu$}

In a more quantitative vein,
a crude estimate for the mean of $\mu$
is
\[ \mu \sim \mu_{est}:= \frac{
\lceil (g n_0) / d \rceil}{g n_0}.
\]
Note that $\mu = \mu_{est}$ is equivalent to  a single dilation step  decreasing  the dilation subspace dimension by $d$, so a main use of this formula is as a test for measuring how often this occurs.
We compared this (see \Cref{table:dilDim})
as well as  the size of   free extreme dilations to the data produced 
in our experiments.
The correspondence of $\mu_{est}$ to our experimental data is very rough and typically overestimates the size dilation we get. However,
the behavior  $\mu = \mu_{est}$
is  observed to always hold  for $g = d$.

If the dilation subspace equations, \Cref{eq:arv}, were generated at random (due to optimizing they are not), one would expect the dilation subspace dimension to be $gn - dk$. 
Hence, if $s$ is the number of steps taken so far,  and we assume that $k = cs$ for some fixed $c$, we obtain the following equation for the dilation subspace dimension
\[
EstDilDim = g(n_0 + s) - d(c s).
\]
At the final iteration we have
$EstDilDim \leq  0$ which yields $g(n_0 + s) - d(c s) \leq 0$. 
We can then solve for the smallest integer $\widehat{s}(c)$ such that the inequality holds, giving us
\[
\widehat{s}(c) = \left\lceil \frac{gn_0}{cd - g}\right\rceil.
\]
We would expect $\mu(n) \leq  \widehat{s}(1) / (gn_0)$ as the kernel dimension must increase by at least one after every step. This turns out to be true in all of our experiments.

\begin{table} 
\begin{tabular}{ |c|c|c|c|c|c|c|c| } 
\hline 
\multicolumn{8}{|c|}{\text{Interior Point Dilation Estimate Tests}} \\ 
\hline 
\multirow{2}{*}{$g$} & \multirow{2}{*}{$d$} & \multirow{2}{*}{$n_0$} & \multicolumn{2}{c|}{\text{Error}} & \multicolumn{3}{c|}{\text{Mean}}\\ 
\cline{4-5} \cline{6-8}  
 &  &  & $\mu _{\text{est}}$ & \% & \text{$\Delta $dilDim/step} & \text{$\Delta $dilDim/step past 1} & \text{first $\Delta $dilDim}\\ 
\hline 
\multirow{12}{*}{2} & \multirow{3}{*}{2} & 3 & 0. & \text{0$\%$} & 2. & 2. & 2.\\ 
\cline{3-8} 
 &  & 4 & 0. & \text{0$\%$} & 2. & 2. & 2.\\ 
\cline{3-8} 
 &  & 5 & 0. & \text{0$\%$} & 2. & 2. & 2.\\ 
\cline{2-8} 
 & \multirow{3}{*}{3} & 3 & 0.109 & \text{24.7$\%$} & 2.432 & 2.015 & 2.75\\ 
\cline{3-8} 
 &  & 4 & -0.023 & \text{-6.53$\%$} & 3.174 & 3.283 & 2.917\\ 
\cline{3-8} 
 &  & 5 & -0.019 & \text{-4.99$\%$} & 2.773 & 2.693 & 2.828\\ 
\cline{2-8} 
 & \multirow{3}{*}{4} & 3 & -0.057 & \text{-20.7$\%$} & 4.442 & 3.075 & 4.2\\ 
\cline{3-8} 
 &  & 4 & 0.044 & \text{15$\%$} & 3.609 & 3.153 & 4.026\\ 
\cline{3-8} 
 &  & 5 & -0.033 & \text{-12.4$\%$} & 4.032 & 3.65 & 4.304\\ 
\cline{2-8} 
 & \multirow{3}{*}{5} & 3 & -0.09 & \text{-37$\%$} & 4.628 & 3. & 4.628\\ 
\cline{3-8} 
 &  & 4 & -0.05 & \text{-25$\%$} & 6.084 & 3.9 & 5.911\\ 
\cline{3-8} 
 &  & 5 & 0.019 & \text{8.68$\%$} & 4.715 & 3.59 & 5.832\\ 
\hline 
\multirow{12}{*}{3} & \multirow{3}{*}{3} & 2 & 0. & \text{0$\%$} & 3. & 3. & 3.\\ 
\cline{3-8} 
 &  & 3 & 0. & \text{0$\%$} & 3. & 3. & 3.\\ 
\cline{3-8} 
 &  & 4 & 0. & \text{0$\%$} & 3. & 3. & 3.\\ 
\cline{2-8} 
 & \multirow{3}{*}{4} & 2 & 0.008 & \text{2.35$\%$} & 2.958 & 1.552 & 4.345\\ 
\cline{3-8} 
 &  & 3 & -0.077 & \text{-30.1$\%$} & 4.137 & 3.782 & 4.434\\ 
\cline{3-8} 
 &  & 4 & 0.02 & \text{7.41$\%$} & 3.768 & 3.41 & 4.446\\ 
\cline{2-8} 
 & \multirow{3}{*}{5} & 2 & -0.112 & \text{-50.7$\%$} & 5.164 & 3.413 & 4.985\\ 
\cline{3-8} 
 &  & 3 & 0.008 & \text{3.48$\%$} & 4.41 & 2.801 & 5.997\\ 
\cline{3-8} 
 &  & 4 & -0.055 & \text{-28.2$\%$} & 5.411 & 4.714 & 6.051\\ 
\cline{2-8} 
 & \multirow{3}{*}{6} & 2 & 0.043 & \text{20.5$\%$} & 5.22 & 3. & 5.22\\ 
\cline{3-8} 
 &  & 3 & -0.08 & \text{-56.3$\%$} & 7.923 & 5.184 & 7.683\\ 
\cline{3-8} 
 &  & 4 & 0.005 & \text{2.91$\%$} & 5.869 & 4.14 & 7.573\\ 
\hline 
\multirow{8}{*}{4} & \multirow{2}{*}{4} & 2 & 0. & \text{0$\%$} & 4. & 4. & 4.\\ 
\cline{3-8} 
 &  & 3 & 0. & \text{0$\%$} & 4. & 4. & 4.\\ 
\cline{2-8} 
 & \multirow{2}{*}{5} & 2 & 0.011 & \text{4.21$\%$} & 3.883 & 2.037 & 5.75\\ 
\cline{3-8} 
 &  & 3 & -0.078 & \text{-45.3$\%$} & 5.884 & 5.919 & 5.812\\ 
\cline{2-8} 
 & \multirow{2}{*}{6} & 2 & -0.111 & \text{-79.9$\%$} & 7.595 & 5.603 & 7.419\\ 
\cline{3-8} 
 &  & 3 & 0.01 & \text{5.65$\%$} & 5.747 & 3.987 & 7.519\\ 
\cline{2-8} 
 & \multirow{2}{*}{7} & 2 & -0.108 & \text{-76.1$\%$} & 7.506 & 4.572 & 7.416\\ 
\cline{3-8} 
 &  & 3 & 0.001 & \text{0.595$\%$} & 5.972 & 2.686 & 9.25\\ 
\hline 
\end{tabular} 
\caption{
    Table describes how accurately our rough ``estimates'' are for $\mu$. Error $(\mu_{est})$ is $\mu - \mu_{est}$, so this being negative means that the ultimate dilation is smaller than we would expect based on $\mu_{est}$. \\
    The signed percent error is calculated using $(\mu - \mu_{est}) / \mu$. \\
    $\Delta$dilDim/step is the average of how much the dilation dimension decreases per step.
}
\label{table:dilDim}
\end{table}

\newpage

\newpage

\subsubsection{
For each $g = d$ the function $\mu(n)$ is observed to be independent of $n$}

In our data, we observe that when $g=d$, the dilation dimension decreases by exactly $g$ with each maximal $1$-dilation; that is, $\mu(n) = 1/g$. One could reasonably speculate that this trend holds for all spectrahedra for which $g=d$. However, we are cautious in making such a conjecture. 

Our caution stems from the fact that, in a sense, there are relatively few different spectrahedra which satisfy $g=d$ when $g$ is ``small". For example, we saw in Section \ref{ssec:d=2proof} that, up to invertible projective maps, there is only one bounded free spectrahedron with $g=d=2$. It is therefore difficult to say if this behavior is due to the fact that $g=d$ or if it is an artifact which arises when $g=d$ and $g$ is small.

\section{Conclusions} \label{section:conclusion}

In this article we proved the existence of matrix extreme points which are not free extreme for real free spectrahedra. To accomplish this, we produced algorithms that reliably compute, using algebraic arithmetic,
\mnae  points when $g = 3$.
In addition when $g=4$ we have produced ``by hand" a \mnae  point. Furthermore, using theory of projective maps of free spectrahedra, we showed that if $d=2$, then matrix extreme points and free extreme points are the same.

In addition to the above theoretical results, we have ``perfected" a numerical algorithm for producing
\mnae  candidates as described in \Cref{subsection:extreme-point-generation}. The reliability of this algorithm relies on a new technique we call 
Nullspace Purification. This modified algorithm has a much lower rate of failure to the unmodified algorithm used previously \cite{EH19}, especially in the $g = 2$ case. The Nullspace Purification algorithm yields extreme point candidates with numerical accuracy on the order of $10^{-13}$ as opposed to the $10^{-7}$ accuracy that we find using semidefinite programming alone which allows us to use tighter tolerances when determining if a point is an extreme candidate.

The experiments described in \Cref{sec:experiments} 
yield the following interesting results. Firstly and most importantly, for $g = 2,3,4$ we find \mnae  candidates. Secondly, we never find any \mnae  candidates when $g = d$. Thirdly, we only find \mnae  candidates when the \arvcnt is strictly greater than the \extcnt. Conversely, for $g = 3,4$ and $d > g$, in thirteen of sixteen cases, when the \arvcnt $\left \lceil \frac{gn}{d} \right \rceil$ is larger than the \extcnt $\left \lceil \frac{g n(n+1) - 2}{2dn} \right \rceil$, we find \mnae  candidates. The three exceptions are potentially explainable through experimental design. We speculate that for $g = 3,4$ and $d > 4$, if the \arvcnt is strictly greater than the \extcnt, then there is a \mnae  for that value of $g, d, n$.

Dilating an interior point in a free spectrahedron to an Arveson extreme point
numerically was done using
\Cref{algo:pure-dilation-algo} with Frozen Nullspace Purification 
and has 
success rates of
$$
\mbox{
 $95.2\%$ for $g=2$, \qquad $99.8\%$ for $g=3$, \qquad $99.92\%$ for $g=4$.
 }
 $$
 Also, the dilation size was
investigated in \Cref{sec:free-carath}.
The ratio $\mu(n)$ of the observed dilation step size required to produce an Arveson dilation divided by the theoretical maximum required dilation step
satisfies
$$ 
\mbox{
mean$[\mu] <0.3$ \  \ for $g=2$, \qquad \ mean$[\mu] <0.23$ \ \ for $g=3$,
}
$$
$$
\mbox{
\ mean$[\mu] <0.18$ \ \ for $g=4$.
}
$$
These
bounds on the mean of $\mu$ show that an Arveson dilation can typically be produced 
more efficiently than 
the theoretical maximum required dilation step (which is not itself big)
and suggests this effect
improves 
 with increasing $n$.

\newpage

\bibliographystyle{alpha}
\bibliography{MatConvexRefs}

\newpage

\newpage

\section{Online Appendix:
Tolerances and the definition of 0} \label{section:appendix}

In our floating point experiments, it was often necessary to make a call about whether a small floating point number is ``zero''. This problem primarily arises in the context of singular values and eigenvalues when computing the nullspace of a matrix. 
This was mentioned  earlier in \Cref{section:what-do-we-call-zero} as well as the 
importance of  
the magnitude of  gap tolerances;
as was said there they were chosen
to be  $\eps_{gap} = \eps_{mag} = 10^{-11}$ for determining the nullspace of $\LA(X)$ and
$\eps_{gap} = \eps_{mag} = 10^{-15}$ for the extreme equations in our experiments.

This section gives more detail and describes experiments which motivated this choice.

\subsection{The Effects of Different Tolerances}

Crucial to all the tests conducted in the paper is the choice of the magnitude and gap tolerances. For instance, if one were to choose a magnitude tolerance of $10^{-100}$ for finding the nullspace of $L_A(X)$, then we will conclude $L_A (X)$ does not have a nullspace for any of the $X$ we generate. While $10^{-100}$ is clearly too tight of a tolerance for use with floating point numbers, around $10^{-12}$ we run into some grey areas.

When testing a point to see if it is extreme, there are two sets of tolerances that we consider: the LMI Tolerances and the Extreme Equation Tolerances (EE Tolerances). The LMI Tolerances are used to determine whether the nullspace of $\LA(X)$ is sufficiently large for $X$ to be considered extreme and the EE Tolerances are used to determine whether the extreme equation has a solution as discussed in \Cref{sec:algorithms}. It is important note to that both of these tolerances are used to determine the size of a certain nullspace, but in order to be called extreme the point must have a large LMI nullspace and no extreme equation solution. Thus, tightening the LMI Tolerances will result in fewer points being called extreme as the apparent LMI nullspace will be smaller and tightening the System Tolerances will result in more points being called extreme as the extreme equation is less likely to be said to have a solution. As a result of this, these two tolerances must be considered independently. Below are some tables showing the results of running the same $g=3$ test as in Section 3.3 but with different LMI and EE Tolerances.

\subsubsection{EE Tolerance $10^{-10}$}

For the following experiments, we fixed the EE Tolerances to $10^{-10}$ and varied the LMI Tolerance from $10^{-10}$ to $10^{-13}$.

\begin{figure}[H]
    \centering
    \includegraphics[width=12cm]{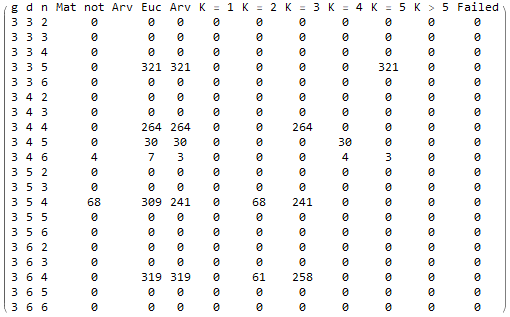}
    \caption{$g = 3$. Starting $n_0 = 3$, $10^{-10}$ LMI Tolerance, $10^{-10}$ EE Tolerance, 1250 Test Points}
    \label{fig:log_singular_values}
\end{figure}

\begin{figure}[H]
    \centering
    \includegraphics[width=12cm]{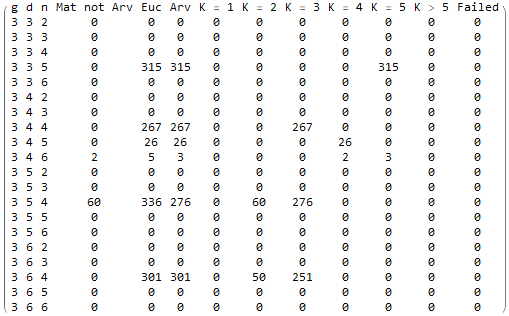}
    \caption{$g = 3$, Starting $n_0 = 3$, $10^{-11}$ LMI Tolerance, $10^{-10}$ EE Tolerance, 1250 Test Points}
    \label{fig:log_singular_values}
\end{figure}

\begin{figure}[H]
    \centering
    \includegraphics[width=12cm]{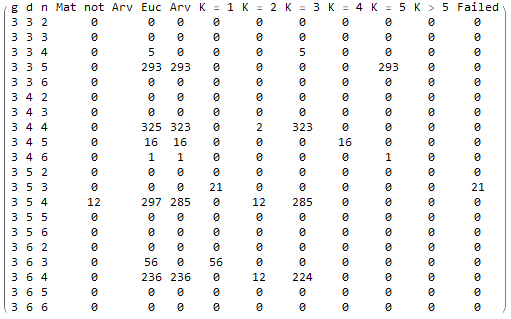}
    \caption{$g = 3$, Starting $n_0 = 3$, $10^{-12}$ LMI Tolerance, $10^{-10}$ EE Tolerance, 1250 Test Points}
    \label{fig:log_singular_values}
\end{figure}

\begin{figure}[H]
    \centering
    \includegraphics[width=12cm]{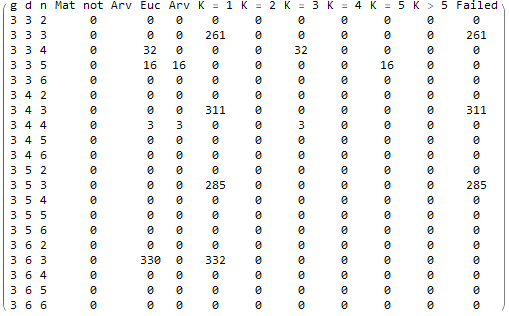}
    \caption{$g = 3$, Starting $n_0 = 3$, $10^{-13}$ LMI Tolerance, $10^{-10}$ EE Tolerance, 1250 Test Points}
    \label{fig:log_singular_values}
\end{figure}

It is important to note that there is minimal change when tightening the tolerance from $10^{-10}$ to $10^{-11}$ in the number of \mnae{} points. However, tightening further has a dramatic effect, causing the number \mnae{} points to drop precipitously. This can be explained by the sharp rise in ``Failed" points, namely points that never succeeded in dilating.

\subsubsection{EE Tolerance $10^{-15}$}

For the following experiments, we fixed the EE Tolerances to $10^{-15}$ and varied the LMI Tolerance from $10^{-10}$ to $10^{-13}$.

\begin{figure}[H]
    \centering
    \includegraphics[width=12cm]{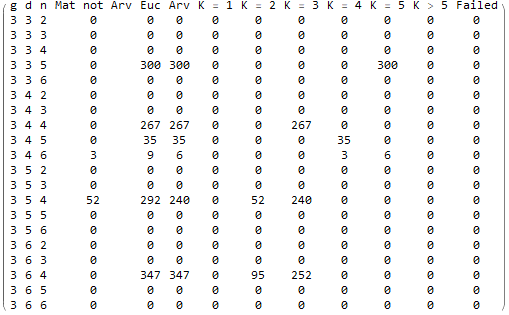}
    \caption{$g = 3$, Starting $n_0 = 3$, $10^{-10}$ LMI Tolerance, $10^{-15}$ EE Tolerance, 1250 Test Points}
    \label{fig:log_singular_values}
\end{figure}

\begin{figure}[H]
    \centering
    \includegraphics[width=12cm]{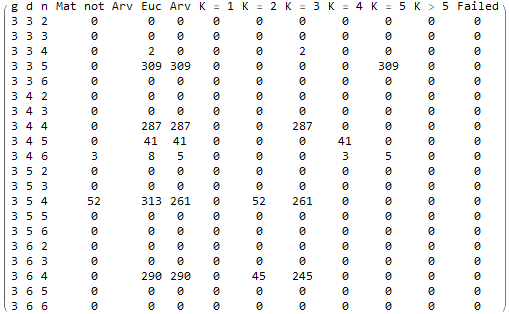}
    \caption{$g = 3$, Starting $n_0 = 3$, $10^{-11}$ LMI Tolerance, $10^{-15}$ EE Tolerance, 1250 Test Points}
    \label{fig:log_singular_values}
\end{figure}

\begin{figure}[H]
    \centering
    \includegraphics[width=12cm]{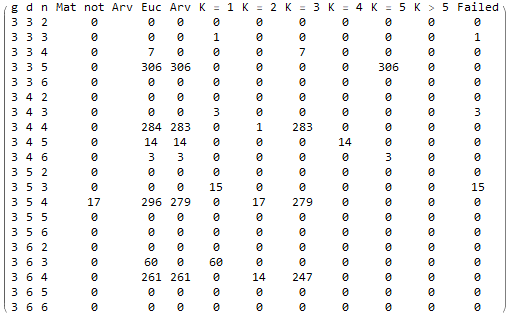}
    \caption{$g = 3$, Starting $n_0 = 3$, $10^{-12}$ LMI Tolerance, $10^{-15}$ EE Tolerance, 1250 Test Points}
    \label{fig:log_singular_values}
\end{figure}

\begin{figure}[H]
    \centering
    \includegraphics[width=12cm]{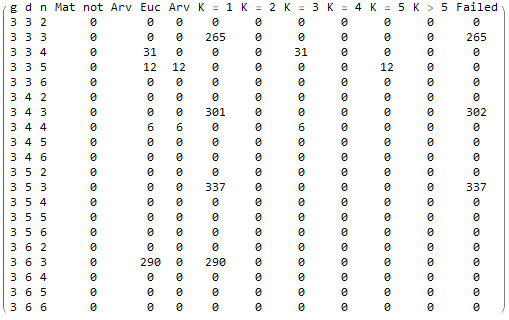}
    \caption{$g = 3$, Starting $n_0 = 3$, $10^{-13}$ LMI Tolerance, $10^{-15}$ EE Tolerance, 1250 Test Points}
    \label{fig:log_singular_values}
\end{figure}

Comparing the $10^{-10}$ and $10^{-15}$ EE Tolerance runs, we see that changing the EE Tolerance, even very dramatically, does not have a significant effect on the number of \mnae{} points. This is perhaps unsurprising as typically when a point is found to not be extreme, it is because the dimension of the nullspace of $L_A(X)$ is not sufficiently large. This causes the extreme equation to have more unknowns than equations, hence the extreme equation necessarily has an exact zero. Making the EE tolerances smaller has no effect on the classification of such points.

In light of the experimental results in this section, we set the EE Tolerance to $10^{-15}$ and the LMI Tolerance to $10^{-11}$. These are essentially the tightest tolerances that we can realistically set as a $10^{-16}$ tolerance for the EE Tolerance would run up against machine precision and $10^{-11}$ is the tightest LMI Tolerance we can set given the empirical testing presented above which shows the limited numerical accuracy of points coming from an SDP and then Full Nullspace Purification.

\end{document}